\theoremstyle{plain}
\newtheorem{theorem}{Theorem}[section]
\newtheorem{proposition}[theorem]{Proposition}
\newtheorem{lemma}[theorem]{Lemma}
\newtheorem{fact}[theorem]{Fact}
\theoremstyle{definition}
\newtheorem{definition}[theorem]{Definition}
\theoremstyle{remark}
\newcommand{\R}[0]{\mathbb{R}}
\newcommand{\C}[0]{\mathbb{C}}
\newcommand{\N}[0]{\mathbb{N}}
\newcommand{\Z}[0]{\mathbb{Z}}
\newcommand{\Prob}[0]{\mathbb{P}}
\newcommand{\E}[0]{\mathbb{E}}
\newcommand{\V}[0]{\mathbb{V}}
\newcommand{\Ind}[0]{\mathds{1}}
\newcommand{\argmin}[0]{\operatorname{argmin}}
\newcommand{\eqdef}{\vcentcolon=}
\newcommand{\defeq}{=\vcentcolon}
\newcommand\mech[1]{{#1}}
\newcommand\vect[1]{\mathbf{#1}}
\newcommand\tv[2]{\mathrm{TV}\left( {#1}, {#2} \right)}
\newcommand\kl[2]{\mathrm{KL}\left( \left. {#1}\right\Vert {#2} \right)}
\newcommand\renyi[3]{\text{D}_{#1}\left( \left. {#2}\right\Vert {#3} \right)}
\newcommand\ham[2]{d_\mathrm{ham}\left( {#1}, {#2} \right)}
\newcommand\p[1]{\left( {#1}\right)}
\newcommand\set[1]{\mathcal{#1}}
\newcommand\proj[0]{\operatorname{Proj}}
\definecolor{ao(english)}{rgb}{0.0, 0.5, 0.0}
\newcommand\green[1]{\textcolor{ao(english)}{#1}}
\newcommand\red[1]{\textcolor{red}{#1}}
\newcommand\orange[1]{\textcolor{orange}{#1}}
\newcommand\blue[1]{\textcolor{blue}{#1}}
\newcommand\vspan[0]{\text{Span}}
\icmltitlerunning{Privately Learning Smooth Distributions on the Hypercube by Projections}
\begin{document}
\def\UrlBreaks{\do\/\do-}

\twocolumn[
\icmltitle{Privately Learning Smooth Distributions on the Hypercube \\ by Projections}
\icmlsetsymbol{equal}{*}

\begin{icmlauthorlist}
\icmlauthor{Clément Lalanne}{1}
\icmlauthor{Sébastien Gadat}{1}
\end{icmlauthorlist}

\icmlaffiliation{1}{Toulouse School of Ecolomics, Université Toulouse 1 Capitole, Toulouse, France}

\icmlcorrespondingauthor{Clément Lalanne}{clement.lalanne@tse-fr.eu}

\icmlkeywords{Privacy, Estimation, Quantiles}

\vskip 0.3in
]
\printAffiliationsAndNotice{}
\begin{abstract}
Fueled by the ever-increasing need for statistics that guarantee the privacy of their training sets, this article studies the centrally-private estimation of Sobolev-smooth densities of probability over the hypercube in dimension $d$. The contributions of this article are two-fold : Firstly, it generalizes the one-dimensional results of \cite{lalanne2023about} to non-integer levels of smoothness and to a high-dimensional setting, which is important for two reasons : it is more suited for modern learning tasks, and it allows understanding the relations between privacy, dimensionality and smoothness, which is a central question with differential privacy. 
Secondly, this article presents a private strategy of estimation that is \emph{data-driven} (usually referred to as \emph{adaptive} in Statistics) in order to privately choose an estimator that achieves a good bias-variance trade-off among a finite family of private projection estimators \emph{without prior knowledge of the ground-truth smoothness $\beta$}. This is achieved by adapting the Lepskii method for private selection, by adding a new penalization term that makes the estimation privacy-aware.
\end{abstract}

\section{Introduction}

Multiple experimental pieces of work have demonstrated that the unrestricted use of data for various learning tasks may cause privacy concerns \citep{narayanan2006break,backstrom2007wherefore,fredrikson2015model,dinur2003revealing,homer2008resolving,loukides2010disclosure,narayanan2008robust,sweeney2000simple,gonon2023sparsity,wagner2018technical,sweeney2002k,carlini2022miafromfirstprinciples}. As a result, formal guarantees have been developed through \emph{differential privacy} \citep{dwork2006calibrating} in order to guarantee that a quantity built on users' data does not leak more information than a given threshold. It is now considered as the gold standard in terms of privacy protection, and it is notably used by Apple~\citep{thakurta2017learning}, Google~\citep{erlingsson2014rappor,46411}, Microsoft~\citep{ding2017collecting} and the US Census Bureau~\citep{DBLP:conf/icde/MachanavajjhalaKAGV08,DBLP:conf/sigmod/HaneyMAGKV17,abowd2018us} among many others. 

Let $f$ be a density of probability on $[0, 1 ]^d$ w.r.t. Lebesgue's measure, and let $X_1, \dots, X_n$ be $n$ i.i.d. random variables with a distribution of probability that admits $f$ as density on $[0, 1 ]^d$. In this article, we will study the estimation of $f$ with a quantity $\hat{f}$ that privately builds on $X_1, \dots, X_n$. The notion of privacy that is adopted in this article is the notion of \emph{central zero-concentrated differential privacy} \cite{dwork2016concentrated,bun2016concentrated} (see \Cref{sectionDifferentialPrivacy}). 

This problem is statistically difficult (in the sense that it requires a lot of data) and suffers from the curse of dimensionality, which means that even without privacy considerations, one must expect an exponential number (in the dimensionality) of data points in order to solve it. Yet, its interest lies in its generality, and in its expressivity. Exploring the effects of privacy on this statistical problem is interesting on a theoretical standpoint, in order to better understand differential privacy, and for the practitioner in order to better decide between this general approach and a different one that incorporates more prior information about the distribution to estimate.

The motivations for this problem are multiple. For instance, learning a density allows learning distributions that are very general, and distributions for which we do not have simple parametric representations. On top of that, learning densities allows learning in a tractable way multimodal distributions and mixture distributions whereas the tractability of alternative methods (e.g. EM) is not always obtainable, even without considering privacy. With a private density estimate, a data analyst may estimate various other interesting statistics without having to see the data again (and hence without having to spend more privacy budget) such as the mean, the median, the different modes, \dots A final application that we can mention is private data generation : If one has access to a private estimate of the density, then one may sample new data by rejection sampling.

The privacy constraint naturally has a cost on the utility of estimators for this task, as with other forms of communication constraints \citep{BarnesHO19,BarnesHO20,AcharyaCST21a,AcharyaCMT21,AcharyaCST21b,AcharyaCFST21}. An important question with differential privacy is to precisely characterize this cost, and to compare it to the incompressible error due to the estimation from samples. In this article, we quantify this trade-off when the density $f$ has a certain level of smoothness $\beta$. Furthermore, we also explain how to privately estimate $f$ when this smoothness level is not accessible to the practitioner, a property of the estimator referred-to as \emph{adaptivity}.

\subsection{Related work}

\begin{table*}[h!]
\caption{Comparison with concurrent work.}
\label{tableComparisonResults}
\vskip 0.15in
\begin{center}
\begin{small}
\begin{sc}
\begin{tabular}{lccccr}
\toprule
Work & Privacy & Dimensionality & Smoothness & Adaptivity & Estimation Rate \\
\midrule
\tiny{\cite{wasserman2010statistical}}   & \red{Fixed}& $ \red{d=1}$ & \green{$\beta \in \p{\frac{1}{2}, +\infty}$} & \red{$\times$} & $\Theta\p{n^{-\frac{2 \beta}{2 \beta + 1}}}$\\
\tiny{\cite{barber2014privacy}}   & \green{Variable} & \green{$d \in \N \setminus \{ 0\}$} & \red{$\beta = 1$} & \red{$\times$} & $\Theta\p{n^{-\frac{2}{2  + d}} + (n \sqrt{\rho})^{- \frac{2}{1+d}})}$\\
\tiny{\cite{lalanne2023about}}   & \green{Variable} & \red{$d = 1$} & \orange{$\beta \in \N \setminus \{ 0\}$} & \red{$\times$} & $\Theta\p{n^{-\frac{2 \beta}{2 \beta  + 1}} + (n \sqrt{\rho})^{- \frac{2 \beta}{\beta+1}})}$\\
\textbf{This Work}   & \green{Variable} & \green{$d \in \N \setminus \{ 0\}$} & \green{$\beta \in \p{0, +\infty}$} & \green{$\surd$} & $\Theta\p{n^{-\frac{2 \beta}{2 \beta  + d}} + (n \sqrt{\rho})^{- \frac{2 \beta}{\beta+d}})}$\\
\bottomrule
\end{tabular}
\end{sc}
\end{small}
\end{center}
In \cite{wasserman2010statistical}, the smoothness is defined in terms of Sobolev ellipsoids. The results are presented under pure differential privacy, which implies concentrated differential privacy. In \cite{barber2014privacy}, the smoothness is expressed in therms of Lipschitz continuity, which is usually assimilated heuristically to $\beta = 1$ in terms of Sobolev spaces. Again, the authors worked under $\epsilon$ pure differential privacy, but we took the liberty to express the results with $\rho = \epsilon^2$ in order to simplify comparisons.
\vskip -0.2in
\end{table*}

\paragraph{Statistics and differential privacy.}
Estimating various quantities under differential privacy has received an increasing amount of attention during the last decade. A non-exhaustive list of references include
\cite{wasserman2010statistical,barber2014privacy,diakonikolas2015differentially,karwa2017finite,bun2019privatehypothesis,bun2021privatehypothesis,kamath2019highdimensional,biswas2020coinpress,kamath2020heavytailed,acharya2021differentially,lalanne:thesis,adenali2021unbounded,cai2021cost,brown2021covariance,cai2021cost,kamath2022improved,lalanne2023private,lalanne2022private,singhal2023polynomial,kamath2023biasvarianceprivacy,kamath2023new}. Most of those references study parametric estimation problems (i.e. estimating a quantity living in a finite-dimensional space), and observe (at a meta level) that the error of estimation can usually be expressed as a function of the sample size ($n$), the dimensionality ($d$), the level of privacy ($\rho$), and various quantities that characterize the regularity of the distribution class (sub-Gaussian, moments, smoothness, ...). Besides, the interesting effects of the privacy can be observed when the level of privacy ($\rho$) is considered as a free variable of the problem. Conversely, fixing the level of privacy usually results in rates of estimation that are the same as in the non-private case. In this article, we will consider the privacy budget as free, thus allowing to investigate some interesting trade-offs between the sample size $n$ and the level of privacy $\rho$.

\paragraph{Unconstrained density estimation.}
The problem of estimating the density $f$ is known as a \emph{nonparametric statistical problem}. It differs from some more usual problems in the sense that the quantity to estimate ($f$) lives in an infinite-dimensional vector space. Specific techniques thus have to be used to estimate it. One of those techniques consists in approximating $f$ by learning its projections on subspaces of growing dimension, and it is being used in this article.
Without privacy concerns, this problem has been extensively studied for multiple decades. Without trying to be exhaustive, some important monographs include \cite{conover1999practicalnonparametric,gyorfi2002distribution,tsybakov2003introduction,wasserman2006allnonparametric}.

\paragraph{Density estimation with differential privacy.}
With differential privacy, the problem of nonparametric density estimation has been studied in a few articles. Before continuing, it is important to note that there are two main privacy attack models in the literature (depending on whether an aggregator can be trusted or not), leading to two distinct definitions of privacy : \emph{central} differential privacy or \emph{local} differential privacy \cite{10.1145/773153.773174,4690986}. This article studies the \emph{central} model, and \emph{local} differential privacy is outside its scope. This paragraph only covers the literature in the central model.
An important early piece of work \cite{wasserman2010statistical} has paved the way for private non-parametric density estimation, presenting general private projection and histogram estimators. However, it only studied the case where the level of privacy $\rho$ is kept constant, leading to the rather anticlimactic conclusion that privacy had no effect on the optimal rate of estimation for the problem at hand. In \cite{barber2014privacy}, the authors were the first to consider $\rho$ as a variable, and to study rates of convergence that are not privacy-agnostic. A shortcoming of their study is that they only study the estimation of Lipschitz-continuous densities, which imposes a fixed level of smoothness. More recently, \cite{lalanne2023about} studied the estimation of one-dimensional densities of general integer-valued Sobolev-smoothness $\beta$ in a non privacy-agnostic way. This is the piece of work that is the closest to our article. However, three problems are that the authors only tackle the case of one-dimensional data, that the smoothness parameter only takes discrete values, and that their optimal estimation procedure needs to know the ground-truth smoothness $\beta$ beforehand. This article solves all of these issues. A comparison between our article and this body of literature is summarized in \Cref{tableComparisonResults}.

\paragraph{Adaptive estimation.} 

Classical frameworks for adaptive estimators build estimators of the bias of each model and select the model with the lowest estimated squared bias penalized by the variance \cite{Akaike1998,mallows1973,Birg1993RatesOC,Barron1999RiskBF,laurent2000adaptive,massart2007}.
The  Lepskii method
\cite{lepskii1991adaptivegaussiannoise,lepskii1992adaptive1,lepskii1992adaptive2,goldenhsluger2007structural,goldenshluger2008universalpointwise,goldenshluger2011bandwidthselection,goldenshluger2013bandwidthselection} is similar, except that the bias is replaced by a comparative bias (within the model class), which is in itself defined as the extremum of a penalized expression. For instance, it has been studied in the context of non-private projection estimators in \cite{comte2012adaptiveregression,chagny2013penalizationvslepski,bertin2016adaptive}. However, to the best of our knowledge, it has never been used as a privacy-aware selection mechanism in the context of central differential privacy before. A nice overview of non-private adaptive methods is presented in \cite{chagny:hal-02132884}.

In the literature of differential privacy, there are clever ways to perform model selection (which is here used as a synonym of adaptivity) without having to split the privacy budget (with composition theorems like \Cref{factCompositionConcentratedDifferentialPrivacy}) between all the models to choose from (e.g. the \emph{Exponential Mechanism} \cite{mcsherry2007mechanism}, \emph{Report Noisy Max} \cite{dwork2014algorithmic} or the \emph{Permute-And-Flip} mechanism \cite{mckenna2020permuteandflip,ding2021permuteandflip}). Such methods have found their way in multiple applications \cite{hardt2012privatedatarelease,blocki2016privatepasswordfrequency,smith2011privacy,bhaskar2010privatefrequentpattern,liu2019privateselection}. Unfortunately, the adaptive estimation procedure that we adopt here does not adequately fit in any of those frameworks, and we will thus resort to using composition theorems for the model selection. A blessing of the procedure that is presented here, however, is that it only needs to select between very few models (typically of the order of a polynomial of $\log (n)$), and the degradation of utility will hence be small.

\paragraph{Under local privacy.}
For completeness, we include references for related problems in the \emph{local} model of privacy (that we recall is different to the model of this article). In this setup, nonparametric density estimation was studied in \cite{duchi2013local,duchi2018minimax,butucea2020local,kroll2021density,schluttenhofer2022adaptive,gyorfi2023multivariate}. In \cite{butucea2020local}, adaptivity is obtained by leveraging the properties of the wavelet basis that is used for the estimation. \cite{kroll2021density} uses a variant of the Lepskii method for adaptivity, with the twist that the level of privacy is fixed beforehand. In \cite{schluttenhofer2022adaptive}, the authors modify the latter to be adaptive to the level of privacy as well. Our results differ from theirs by the model of privacy, and by the fact that they look at the estimation of the density at a single point whereas we look at the estimation of the density on the whole support. In particular, the rates of estimation are different. Finally, nonparametric regression was studied in \cite{berrett2021strongly,gyorfi2022rate},  nonparametric tests were studied in \cite{lam2022minimax}, and recently, nonparametric locally-private Bayesian modeling was proposed in \cite{beraha2023mcmc}.

\subsection{Contributions}

The main contributions of this article could be summarized as follows :

\paragraph{Adaptivity.} The main contribution of this article is to propose an adaptive estimator based on the Lepskii method that almost matches the performance of the optimal estimator, without prior knowledge of the smoothness of the density of interest. 

In practice, it means that the practitioner does not have to have strong prior information about the density to estimate in order to estimate it near-optimaly. 

Adaptivity is an important property in statistics and in particular with density estimation, and to the best of our knowledge, no concurrent work for density estimation in the context of central differential privacy has presented such adaptive procedure before. 

\paragraph{Non-integer levels of smoothness.} While the results of \cite{lalanne2023about} coincide with the ones presented in this article in the case of \emph{integer-valued} $\beta$'s (in dimension $1$), the authors did not mention the eventuality of more fine-grained levels of smoothness. 

This choice seems to be entirely motivated by technical reasons, and is unsatisfactory in practice. Indeed, it forces one to model the density of interest by a conservative smoothness level, which in turn leads to suboptimal convergence speeds. Real-values levels of smoothness allow for a much finer-grained modeling of the densities of interest.

A usual trick for generalizing consists in defining the class of densities of interest in terms of their Fourier coefficient instead of their derivatives (which was the reason for the integer-valued smoothness in the first place). However, such definition does not lead to provably good lower-bounds under differential privacy with the techniques presented in \cite{lalanne2023about}.

Instead, in this article, we circumvent that difficulty by considering an extended definition of Sobolev spaces via Hölder remainders (see \Cref{sec:realSobolevSpaces}). This definition is a bit harder to work with, yet it has the advantage of leading to tight lower and upper-bounds for any non-negative $\beta$. In particular, we believe that certain technical results developed here such as \Cref{prop:Fourier_Sobolev_estimates} may be of independent interest

\paragraph{Arbitrary dimension.} Finally, the last contribution of this article is to generalize previous results to an arbitrary dimension $d$. In particular, the effects of dimensionality on the estimation and on the privacy-utility tradeoff are discussed in \Cref{subsec:qualitativediscussion}. While the techniques for this generalization are rather straightforward, we believe that presenting results in this general form allow understanding the links between dimensionality and privacy, which is important in practice.

As a final note, we would like to highlight that all the proposed methods are of polynomial complexities.

\subsection{Notations}

$\N$, $\Z$, $\R$ and $\C$ are respectively used to refer to the sets of natural numbers (including $0$), relative numbers, real numbers, and complex numbers. In order to avoid confusion with indexes, we note $i_{\C}$ the canonical complex square root of $-1$. If $x \in \C$, $\bar{x}$ is used to refer to its conjugate complex number, $|z|$ to its modulus, $R(z)$ to its real part and $I(z)$ to its imaginary part. We equip $\C^d$ with its standard Hermitian product $\langle \cdot, \cdot \rangle$, and its associated norm is noted $\|\cdot\|$. We note $B(x, r)$ the open ball or radius $r$ centered in $x$ for $\|\cdot\|$. For $p \in \N \setminus \{ 0\} \cup \{ + \infty\}$, $\|\cdot\|_p$ refers to the usual $l_p$ norm for complex-valued vectors (in particular $\|\cdot\| = \|\cdot\|_2$), and to the usual $L^p$ norm for complex-valued measurable functions. For any $k \in \N$, $\mathcal{C}^k(\mathcal{S})$ is used to refer to the set of functions from a space $\mathcal{S}$ to $\C$ that are $k$ times continuously differentiable. $\mathcal{C}^{\infty}(\mathcal{S})$ is used to refer to $\cap_{k \in \N} \mathcal{C}^k(\mathcal{S})$. For a multi-index $a = (a^1, \dots, a^d) \in \N^d$, $|a|$ is used to refer to the \emph{length} of $a$, which is $\sum_{i=1}^d a^i$. 

For a multi-index $a = (a^1, \dots, a^d) \in \N^d$ and $b \in \C$, we define $b a \eqdef (b a^1, \dots, b a^d)$, $b^a \eqdef b^{|a|}$, and $a^b \eqdef ((a^1)^b, \dots, (a^d)^b)$. Furthermore, if $b = (b^1, \dots, b^d) \in \C^d$, $b^{\times a} \eqdef (b^1)^{a^1} \times \dots \times (b^d)^{a^d}$.
In context, this conflict small conflict of notations shouldn't be an issue.
Given a $k \in \N$, $f \in \mathcal{C}^k(\R^d)$, and a multi-index $a = (a^1, \dots, a^d) \in \N^d$ such that $|a| \leq k$, we use the notation
\begin{equation*}
    \partial^a f \eqdef \frac{\partial^{|a|} f}{\partial_1^{a^1} \partial_2^{a^2} \dots \partial_d^{a^d}} \;,
\end{equation*}
where $\partial / \partial_i$ is used to refer to the derivation w.r.t. the $i^{th}$ component in the canonical basis of $\R^d$. Alternatively, we may also note $f^{(a)}$ as a short for $\partial^a f$. $\mathcal{N}(\mu, \Sigma)$ refers to the multivariate normal distribution of mean vector $\mu$ and of covariance matrix $\Sigma$. When a distribution is used in vector calculus (e.g. $a + \mathcal{N}(\mu, \Sigma)$), the distribution has to be understood as a random variable with the desired distribution. Without further specification, it is taken independent of the rest of the stochastic quantities of the article. For a density of probability $f$, we may simply refer by $f$ the probability distribution associated with it. The rest of the notations are introduced within the article directly. 

\section{Differential privacy}

\label{sectionDifferentialPrivacy}

This section presents some basic background on differential privacy that will be needed for the rest of the article.

Given two datasets $\vect{X} = (X_1, \dots, X_n)\in \set{X}^n$ and $\vect{Y}= (Y_1, \dots, Y_n) \in \set{X}^n$ where $\set{X}$ is the feature space ($[0, 1]^d$ in this article), the \emph{Hamming} distance between $\vect{X}$ and $\vect{Y}$ is defined as
\begin{equation*}
    {\ham{\vect{X}}{\vect{Y}} \eqdef \sum_{i=1}^n \Ind_{X_i \neq Y_i}} \;.
\end{equation*}

\begin{definition}[{$\rho$-zCDP \cite{dwork2016concentrated,bun2016concentrated}}]
\label{definitionConcentratedDifferentialPrivacy}
    Given an output space $\set{O}$ and $\rho \in (0, +\infty)$, a randomized mechanism (i.e. a conditional kernel of probabilities) $\mech{M} : \set{X}^n \rightarrow \set{O}$ is $\rho$-zero concentrated differentially private ($\rho$-zCDP) if $\forall \vect{X}, \vect{Y} \in \set{X}^n$, $\ham{\vect{X}}{\vect{Y}} \leq 1 \implies $
\begin{equation*}
\label{eq:defcdp}
    \forall 1 < \alpha < +\infty\, : \renyi{\alpha}{\mech{M}(\vect{X})}{\mech{M}(\vect{Y})} \leq \rho \alpha ,
\end{equation*}
where $\renyi{\alpha}{\cdot}{\cdot}$ denotes the Renyi divergence of level $\alpha$, defined when $\alpha>1$ as: 
\begin{equation*}
    \renyi{\alpha}{\mathbb{P}}{\mathbb{Q}} \eqdef \frac{1}{\alpha - 1} \log \int  \p{\frac{d \mathbb{P}}{ d \mathbb{Q}}}^{\alpha - 1} d \mathbb{Q} \;.
\end{equation*}
For more details on this measure of divergence, please refer to \cite{van2014renyi}.
\end{definition}

\begin{lemma}[Privacy of the Gaussian mechanism (Proposition 6 with Lemma 7 in \cite{bun2016concentrated})]
\label{factProvacyGaussianMechanism}
    Given a deterministic function $h$ mapping a dataset to a quantity in $\R^{d'}$, one can define the $l_2$-sensitivity of $h$ as
    \begin{equation*}
\begin{aligned}
    \Delta_2 h \eqdef \sup_{\vect{X}, \vect{Y} \in \set{X}^n : \ham{\vect{X}}{\vect{Y}} \leq 1} \big\| h (\vect{X})  - h (\vect{Y})\big\|_2 \;.
\end{aligned}
\end{equation*}
When this quantity is finite, for any $\rho > 0$, the Gaussian mechanism defined as 
\begin{equation*}
    \begin{aligned}
        \vect{X} \mapsto h(\vect{X}) + \frac{\Delta_2 h}{\sqrt{2 \rho}} \mathcal{N}(0, I_{d'}) \;,
    \end{aligned}
\end{equation*}
is $\rho$-zCDP.
\end{lemma}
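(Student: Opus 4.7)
The plan is to verify the definition of $\rho$-zCDP directly by computing the Rényi divergence between the output distributions on neighboring datasets. Fix $\vect{X}, \vect{Y} \in \set{X}^n$ with $\ham{\vect{X}}{\vect{Y}} \leq 1$, and set $\sigma \eqdef \Delta_2 h / \sqrt{2\rho}$. Then $\mech{M}(\vect{X})$ and $\mech{M}(\vect{Y})$ are distributed according to $\mathcal{N}(h(\vect{X}), \sigma^2 I_{d'})$ and $\mathcal{N}(h(\vect{Y}), \sigma^2 I_{d'})$ respectively. So the whole problem reduces to computing the Rényi divergence between two multivariate Gaussians with the same covariance matrix.

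First, I would establish the closed form
\begin{equation*}
    \renyi{\alpha}{\mathcal{N}(\mu_1, \sigma^2 I_{d'})}{\mathcal{N}(\mu_2, \sigma^2 I_{d'})} = \frac{\alpha \, \|\mu_1 - \mu_2\|_2^2}{2 \sigma^2} \;.
\end{equation*}
This is a direct Gaussian integral computation: plug the two densities into the definition of $\renyi{\alpha}{\cdot}{\cdot}$, factor the exponents, complete the square in the integrand, and recognize what remains as the integral of an (unnormalized) Gaussian density. The $\alpha - 1$ factor in the denominator of the definition cancels cleanly with an $\alpha(\alpha-1)$ term coming from the algebraic manipulation of the quadratic exponents, leaving the announced expression.

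Next, I would inject the $l_2$-sensitivity bound. By definition of $\Delta_2 h$ and since $\ham{\vect{X}}{\vect{Y}} \leq 1$, we have $\|h(\vect{X}) - h(\vect{Y})\|_2 \leq \Delta_2 h$. Combining with the previous identity,
\begin{equation*}
    \renyi{\alpha}{\mech{M}(\vect{X})}{\mech{M}(\vect{Y})} \leq \frac{\alpha (\Delta_2 h)^2}{2 \sigma^2} = \rho \, \alpha \;,
\end{equation*}
by the very choice $\sigma^2 = (\Delta_2 h)^2 / (2\rho)$. Since this holds for every $\alpha > 1$ and every pair of neighboring datasets, $\mech{M}$ is $\rho$-zCDP.

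The only genuinely technical step is the Rényi-divergence computation, and even that is entirely routine provided one is careful with the completion of the square and with the domain of convergence of the resulting Gaussian integral (which always converges here thanks to the matched covariances, so no restriction on $\alpha$ beyond $\alpha > 1$ is needed). Everything else is just bookkeeping.
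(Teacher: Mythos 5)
Your proof is correct, and it is essentially the standard argument for this result. Note that the paper does not give its own proof of this lemma; it defers to \cite{bun2016concentrated} via the citation ``Proposition 6 with Lemma 7.'' In that reference, Proposition~6 establishes the one-dimensional Gaussian case by computing the Rényi divergence between two scalar Gaussians with matched variance, and Lemma~7 (adaptive composition, which in this independent-coordinate setting reduces to the tensorization of Rényi divergence) is then used to assemble the $d'$-dimensional statement by summing the per-coordinate contributions $\alpha (h(\vect{X})_i - h(\vect{Y})_i)^2/(2\sigma^2)$ into $\alpha \|h(\vect{X}) - h(\vect{Y})\|_2^2/(2\sigma^2)$. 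You instead compute the multivariate Gaussian Rényi divergence in one shot by completing the square in $\R^{d'}$. The two routes are arithmetically identical — the product structure of $\mathcal{N}(\mu, \sigma^2 I_{d'})$ makes the $d'$-dimensional completion of the square factor coordinatewise into exactly the $d'$ one-dimensional computations — so yours is a slightly more self-contained presentation of the same calculation, trading a single appeal to composition/tensorization for one multivariate Gaussian integral. Both correctly use the sensitivity bound and the fact that the matched covariances guarantee convergence for all $\alpha > 1$.
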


\begin{lemma}[{Adaptive composition of private mechanisms (Lemma 7 in \cite{bun2016concentrated})}]
\label{factCompositionConcentratedDifferentialPrivacy}
    If the private mechanisms $M_1(\cdot), M_2(\cdot , z)$ are respectively $\rho_1$-zCDP and $\rho_2$-zCDP for any context $z$, then the private mechanism $M_2(\cdot, M_1(\cdot))$ is $(\rho_1+ \rho_2)$-zCDP.
\end{lemma}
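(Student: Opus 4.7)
The plan is to show that for any two neighboring datasets $\vect{X}, \vect{Y} \in \set{X}^n$ with $\ham{\vect{X}}{\vect{Y}} \leq 1$, the Rényi divergence of the \emph{joint output} $(M_1(\vect{X}), M_2(\vect{X}, M_1(\vect{X})))$ with respect to the joint output $(M_1(\vect{Y}), M_2(\vect{Y}, M_1(\vect{Y})))$ is at most $(\rho_1 + \rho_2)\alpha$ for every $\alpha > 1$. Once this is established, the composed mechanism $M_2(\cdot, M_1(\cdot))$ is obtained as a deterministic post-processing (projection onto the second coordinate) of this joint output, and since Rényi divergence is non-increasing under post-processing, we recover the desired $(\rho_1+\rho_2)$-zCDP guarantee.

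Concretely, write the densities of the joint outputs as $P(o_1, o_2) = p_1(o_1)\, p_{2\mid o_1}(o_2)$ and $Q(o_1, o_2) = q_1(o_1)\, q_{2\mid o_1}(o_2)$, where $p_1, q_1$ are the laws of $M_1(\vect{X}), M_1(\vect{Y})$ and $p_{2\mid o_1}, q_{2\mid o_1}$ are the laws of $M_2(\vect{X}, o_1), M_2(\vect{Y}, o_1)$. By Fubini,
\begin{equation*}
\int \left(\frac{dP}{dQ}\right)^{\alpha-1} dP = \int p_1(o_1)\left(\frac{p_1(o_1)}{q_1(o_1)}\right)^{\alpha-1} \! \int \! p_{2\mid o_1}(o_2)\left(\frac{p_{2\mid o_1}(o_2)}{q_{2\mid o_1}(o_2)}\right)^{\alpha-1} \! do_2\, do_1.
\end{equation*}
The inner integral equals $\exp\bigl((\alpha-1)\, \renyi{\alpha}{p_{2\mid o_1}}{q_{2\mid o_1}}\bigr)$, and since $\vect{X}$ and $\vect{Y}$ are still neighbors when fed to $M_2(\cdot, o_1)$, the $\rho_2$-zCDP assumption on $M_2(\cdot, z)$ valid for \emph{every} context $z$ yields the uniform bound $\renyi{\alpha}{p_{2\mid o_1}}{q_{2\mid o_1}} \leq \rho_2 \alpha$. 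Pulling the constant $\exp((\alpha-1)\rho_2\alpha)$ out of the outer integral leaves $\int p_1(o_1)(p_1(o_1)/q_1(o_1))^{\alpha-1} do_1 = \exp\bigl((\alpha-1)\, \renyi{\alpha}{p_1}{q_1}\bigr)$, which is bounded by $\exp((\alpha-1)\rho_1\alpha)$ by the $\rho_1$-zCDP hypothesis on $M_1$.

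Taking the logarithm and dividing by $\alpha - 1$ gives $\renyi{\alpha}{P}{Q} \leq (\rho_1 + \rho_2)\alpha$ for every $\alpha > 1$, which by post-processing transfers to the actual output of $M_2(\cdot, M_1(\cdot))$ and establishes the claim. The only subtlety — which is really the one conceptual point of the argument — is that the uniform-in-$z$ assumption on $M_2$ is precisely what lets us extract the inner Rényi bound \emph{pointwise in $o_1$} before integrating; without this uniformity one would only obtain an averaged bound that does not cleanly plug into the exponential and would fail to give the clean additive composition on the zCDP parameter.
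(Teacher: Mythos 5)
Your argument is correct and is in fact the canonical proof of this composition lemma. The paper does not prove this statement --- it cites it directly as Lemma~7 of \cite{bun2016concentrated} --- so there is no internal proof to compare against; your factorization $P(o_1,o_2)=p_1(o_1)\,p_{2\mid o_1}(o_2)$, pointwise extraction of the inner R\'enyi bound (using uniformity in $z$), and final post-processing step mirror exactly the argument in that reference (and the analogous R\'enyi-DP composition proof of Mironov). One cosmetic note: the definition of $\renyi{\alpha}{\cdot}{\cdot}$ printed in the paper has $d\mathbb{Q}$ where it should have $d\mathbb{P}$ (a typo); your computation uses the correct $\int (dP/dQ)^{\alpha-1}\,dP$.
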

The last result can easily be generalized to a finite family of mechanisms by induction.

Finally, the last property of private mechanisms that we will use implicitly throughout this article is the data-processing inequality (or post-processing lemma in the language of differential privacy (Lemma 8 in \cite{bun2016concentrated})), which states that if $\mech{M}$ satisfies $\rho$-zCDP, then for any conditional kernel of probabilities $g$, $g \circ \mech{M}$ also satisfies $\rho$-zCDP. 

\section{(Private) projection estimators}
\label{sec:projectionestimators}

In Statistics, when the quantity to estimate $f$ belongs to some Hilbert space that admits a countable Hilbert basis $(\phi_k)_k$, projection estimators \cite{tsybakov2003introduction} usually refer to estimators of the form
\begin{equation*}
    \hat{f} = \sum_k \hat{\theta}_k \phi_k \;,
\end{equation*}
where the sum is usually truncated with a spectral cut-off of frequencies, and where $(\hat{\theta}_k)_k$ is a sequence of estimators of the true coefficients of the decomposition in the Hilbert basis. The name comes from the fact that such estimator mimics the orthogonal projection of $f$ onto the space spanned by the first vectors of this Hilbert basis. To the best of our knowledge, their first appearance in the context of differential privacy is in \cite{wasserman2010statistical}.

\subsection{Explicit construction}

We detail in \Cref{sectionSobolevSpacesUpperBounds} the exact functional spaces in which we assume the unknown density $f$ to be. For now, we only need to know that $f$ is in $L^2([0, 1]^d)$ equipped with Lebesgue's measure and its standard Hermitian product 
\begin{equation*}
    \langle f, g \rangle \eqdef \int_{[0, 1]^d}f \bar{g} \;,
\end{equation*} 
and its standard inherited norm $\| \cdot \|$.
We further fix the Hilbert basis $(\phi_k)_k$ of $L^2([0, 1]^d)$  as the one associated to the following Fourier basis :
\begin{equation}
\label{equationFourierBasisDefinition}
\begin{aligned}
    \forall k \in \Z^d, \quad \phi_k(x) &\eqdef e^{i_{\C} 2 \pi \langle k, x \rangle} \\
    &= e^{i_{\C} 2 \pi ( k_1 x^1 + \dots + k_d x^d )} \;.
\end{aligned}
\end{equation}
We also define $S_k \eqdef \vspan \p{\phi_k}_{k \in \{-M, \dots, M \}^d}$ the finite-dimensional vector space spanned by the $\phi_k$'s with every index in $k$ lower than $M$, and we define $f_M$ as the orthogonal projection of $f$ onto $S_M$.

From this, we define the natural (by the law of large numbers) estimators of the coefficients in the Fourier basis
\begin{equation}
\label{equationFourierEstimatorsDefinition}
    \tilde{\theta}_k \eqdef \frac{1}{n} \sum_{j=1}^n \bar{\phi}_k(X_j) 
     = \frac{1}{n} \sum_{j=1}^n e^{- i_{\C} 2 \pi ( k_1 X_j^d + \dots + k_d X_j^d )}
    \;,
\end{equation}
and their noisy estimates
\begin{equation}
\label{eq:noise_hat_coeff_f}
    \hat{\theta}_k \eqdef \tilde{\theta}_k + \sigma_M \xi_k \;,
\end{equation}
where $\sigma_M$ will be a variance factor that will be tuned later on to obtain the desired level of privacy, 
and $(\xi_k)_{k \in \mathbb{Z}^d}$ is an \textit{i.i.d.} complex Gaussian noise
\begin{equation}
    \label{def:noise_xi_k}
    \xi_k \sim \p{\mathcal{N}(0, 1) + i_{\C} \mathcal{N}(0, 1)} \;.
\end{equation}
Finally, we define the projection estimator at rank $M$ as 
\begin{equation}
\label{nonPrivateEstimatorDefinition}
    \Tilde{f}_M \eqdef \sum_{k \in \{-M, \dots, M \}^d} \Tilde{\theta}_k \phi_k \;,
\end{equation}
and its private counterpart as
\begin{equation}
\label{privateEstimatorDefinition}
    \hat{f}_M \eqdef \sum_{k \in \{-M, \dots, M \}^d} \hat{\theta}_k \phi_k \;.
\end{equation}

\subsection{General utility}

The general utility of the previous estimator is given by the following result : 

\begin{lemma}[General bias-variance decomposition of $\hat{f}_M$]
\label{lemmaBiasVarianceDecomposition}
    For any $M$, the estimator $\hat{f}_M$ satisfies 
    \begin{equation*}
        \begin{aligned}
            \E \p{\| f - \hat{f}_M\|^2} 
            \leq 
            &\underbrace{\| f - f_M\|^2}_{\blue{\text{Squared Bias}}} + 
            \underbrace{\frac{ (2M + 1)^d}{n}}_{\green{\text{Sampling Variance UB}}}\\
            &+ \underbrace{2 (2M + 1)^d \sigma_M^2}_{\red{\text{Privacy Noise Variance}}} \;.
        \end{aligned}
    \end{equation*}
\end{lemma}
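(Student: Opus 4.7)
The plan is to decompose the squared error orthogonally, then apply Parseval on the finite piece and compute coefficient-wise variances.

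First I would write $f-\hat{f}_M = (f-f_M) + (f_M - \hat{f}_M)$. Since $f_M$ is the orthogonal projection of $f$ onto $S_M$ and $\hat{f}_M \in S_M$, the two summands are orthogonal in $L^2([0,1]^d)$, giving
\begin{equation*}
\|f-\hat{f}_M\|^2 = \|f-f_M\|^2 + \|f_M - \hat{f}_M\|^2,
\end{equation*}
which already isolates the squared-bias term. Because this equality is deterministic in the data, taking expectation only affects the second term.

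Next, I would use Parseval's identity on the orthonormal family $(\phi_k)_{k\in\{-M,\dots,M\}^d}$. Writing $\theta_k \eqdef \langle f,\phi_k\rangle$, we have $f_M=\sum_k \theta_k\phi_k$ and $\hat{f}_M=\sum_k \hat\theta_k\phi_k$, so
\begin{equation*}
\E\|f_M-\hat{f}_M\|^2 = \sum_{k\in\{-M,\dots,M\}^d} \E\bigl|\hat\theta_k - \theta_k\bigr|^2.
\end{equation*}
Then I would split $\hat\theta_k - \theta_k = (\tilde\theta_k-\theta_k) + \sigma_M\xi_k$ and use independence of the privacy noise $\xi_k$ from the data (and mean-zero of each summand) to get $\E|\hat\theta_k-\theta_k|^2 = \V(\tilde\theta_k) + \sigma_M^2\E|\xi_k|^2$.

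For the sampling variance, each $\bar\phi_k(X_j)$ is unit modulus, so $\E|\bar\phi_k(X_1)|^2=1$; by i.i.d.\ averaging $\V(\tilde\theta_k) = \tfrac{1}{n}\bigl(1-|\theta_k|^2\bigr)\le \tfrac{1}{n}$. For the privacy noise, since $\xi_k \sim \mathcal{N}(0,1)+i_\C\mathcal{N}(0,1)$ has independent unit-variance real and imaginary parts, $\E|\xi_k|^2 = 2$. Summing the two bounds over the $(2M+1)^d$ indices yields
\begin{equation*}
\E\|f_M-\hat{f}_M\|^2 \le \frac{(2M+1)^d}{n} + 2(2M+1)^d\sigma_M^2,
\end{equation*}
and combining with the bias term finishes the proof.

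The only subtle point (hardly an obstacle) is to handle the complex-valued setting cleanly: keeping track that $\E|\xi_k|^2=2$ rather than $1$, and that $\tilde\theta_k$ is an unbiased complex estimator of $\theta_k=\langle f,\phi_k\rangle$ so that the cross term between sampling and Gaussian noise vanishes in expectation. Everything else is a direct application of orthogonality and Parseval.
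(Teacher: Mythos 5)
Your proof is correct and follows essentially the same route as the paper's: Parseval/orthogonality to split off the squared bias, a per-coefficient bias-variance decomposition, and a bound on the sampling variance using $|\phi_k|\equiv 1$. The only cosmetic difference is that you compute $\V(\tilde\theta_k)=\tfrac{1}{n}(1-|\theta_k|^2)$ exactly before bounding by $1/n$, whereas the paper invokes Popoviciu's inequality (its Lemma on Popoviciu) to bound $\V(\bar\phi_k(X_1))\le 1$ directly — these yield the same bound.
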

\begin{proof}
    See \Cref{proofOfLemmaBiasVarianceDecomposition}.
\end{proof}

The bias term $\| f - f_M\|$ simply characterizes how well $f$ is approximated  in $S_M$. Controlling this term requires regularity assumptions on $f$, which is done in \Cref{sectionSobolevSpacesUpperBounds}.

\subsection{Privacy guarantees}

The privacy of this estimation procedure is given by the following theorem :
\begin{theorem}[Privacy of $\hat{f}_M$]
\label{theoremPrivacyProjectionEstimator}
    For any $M$, the mechanism $(X_1, \dots, X_n) \mapsto \hat{f}_M$ (or equivalently the mechanism that releases the computed $\hat{\theta}_k$'s for $k \in \{-M, \dots, M \}^d$) is $\rho$-zCDP if $\sigma_M = \frac{2 \sqrt{ (2M+1)^d}}{n\sqrt{\rho}}$.
\end{theorem}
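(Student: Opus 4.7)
The plan is to invoke the Gaussian mechanism (\Cref{factProvacyGaussianMechanism}) applied to the deterministic function that produces the empirical Fourier coefficients $(\tilde{\theta}_k)_{k \in \{-M,\dots,M\}^d}$, and then close the argument with the post-processing property of $\rho$-zCDP. Since \Cref{factProvacyGaussianMechanism} is stated for functions valued in $\R^{d'}$, the first step is to identify the complex output as a real vector of dimension $2(2M+1)^d$ by splitting each coefficient into its real and imaginary parts, and to interpret the noise $\sigma_M \xi_k = \sigma_M(\mathcal{N}(0,1) + i_{\C} \mathcal{N}(0,1))$ as the real Gaussian noise $\sigma_M \mathcal{N}(0, I_{2(2M+1)^d})$ in that embedding.

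The main computation is the $l_2$-sensitivity of the map $h : \vect{X} \mapsto (\tilde{\theta}_k(\vect{X}))_k$. For two datasets $\vect{X}, \vect{Y}$ differing only in the $j$-th entry, definition \eqref{equationFourierEstimatorsDefinition} gives
\begin{equation*}
    \tilde{\theta}_k(\vect{X}) - \tilde{\theta}_k(\vect{Y}) = \frac{1}{n}\bigl(\bar{\phi}_k(X_j) - \bar{\phi}_k(Y_j)\bigr),
\end{equation*}
and because every $\phi_k$ is a complex exponential we have $|\bar{\phi}_k(X_j) - \bar{\phi}_k(Y_j)| \leq 2$. Summing $|{\cdot}|^2$ over the $(2M+1)^d$ indices (and noting that $|z|^2$ equals the squared real-coordinate norm of $z$) yields
\begin{equation*}
    \|h(\vect{X})-h(\vect{Y})\|_2^2 \leq \frac{4(2M+1)^d}{n^2},
\end{equation*}
hence $\Delta_2 h \leq \dfrac{2\sqrt{(2M+1)^d}}{n}$.

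Plugging this into \Cref{factProvacyGaussianMechanism}, the Gaussian mechanism with standard deviation $\Delta_2 h/\sqrt{2\rho}$ per real coordinate is $\rho$-zCDP. The proposed choice $\sigma_M = \dfrac{2\sqrt{(2M+1)^d}}{n\sqrt{\rho}}$ dominates this threshold (with a $\sqrt{2}$ safety factor), so releasing the noisy coefficients $(\hat{\theta}_k)$ defined in \eqref{eq:noise_hat_coeff_f} is $\rho$-zCDP. The final step is immediate: the estimator $\hat{f}_M$ in \eqref{privateEstimatorDefinition} is a deterministic function of these noisy coefficients, so the post-processing property of $\rho$-zCDP transfers the guarantee from the vector $(\hat{\theta}_k)$ to $\hat{f}_M$.

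No step is really an obstacle; the only point that demands a little care is the bookkeeping between the complex representation used in the construction and the real vector-valued formulation of \Cref{factProvacyGaussianMechanism}, in particular noting that the variance $\sigma_M^2$ of each real coordinate of $\xi_k$ (rather than the total $2\sigma_M^2$ of $\xi_k$ as a complex variable) is what is compared to $\Delta_2 h/\sqrt{2\rho}$.
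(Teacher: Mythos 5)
Your proof is correct and follows the same overall route as the paper: identify the complex Fourier map with a real-valued one, bound its $l_2$ sensitivity, invoke the Gaussian mechanism (\Cref{factProvacyGaussianMechanism}), and conclude via post-processing. The one difference worth noting is in the sensitivity computation: you sum $|\tilde{\theta}_k(\vect{X})-\tilde{\theta}_k(\vect{Y})|^2$ directly over the $(2M+1)^d$ complex indices, which gives $\Delta_2 h \leq \frac{2\sqrt{(2M+1)^d}}{n}$, whereas the paper bounds each of the $2(2M+1)^d$ real/imaginary coordinates separately by $\frac{2}{n}$ and sums, obtaining $\Delta_2 h \leq \frac{2\sqrt{2(2M+1)^d}}{n}$; the paper's bound is looser by a factor $\sqrt{2}$ because it discards the constraint that the squared real and imaginary increments add up to $|\tilde{\theta}_k(\vect{X})-\tilde{\theta}_k(\vect{Y})|^2$. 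Your tighter bound is the reason you (correctly) observe a $\sqrt{2}$ safety margin in $\sigma_M$ — with the paper's cruder estimate the stated $\sigma_M$ is exactly the threshold, while your argument shows $\sigma_M/\sqrt{2}$ would already suffice.
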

\begin{proof}
    See \Cref{proofOfTheoremPrivacyProjectionEstimator}.
\end{proof}
If follows from the application of the classical privacy guarantees of the Gaussian mechanism.

\section{Upper-Bounds for different smoothness levels}
\label{sectionSobolevSpacesUpperBounds}

As explained in the last section, controlling the bias term $\| f - f_M\|$ requires regularity assumptions on $f$. This section solves this issue by imposing Sobolev-smoothness.

\subsection{Sobolev spaces in high dimension}

In order to simplify the reading flow of the article, its main body only presents spaces of \emph{integer} smoothness $\beta \in \N \setminus \{ 0\}$. All the results can be generalized to spaces of \emph{real} smoothness $\beta > 0$. With every result that we present for an integer $\beta$ in the main body of the article, we will talk about its counterpart in the case of real $\beta$, and we will link to the technical details in the appendix.

For $\beta \in \N \setminus \{0\}$ and $L>0$, the isotropic Sobolev space $\mathcal{S}_L(\beta)$ is defined as the subset of $\mathcal{C}^k([0, 1]^d)$ of functions of which the energy of the $\beta^\text{th}$ derivative is bounded by $L^2$. Namely, $f \in \mathcal{S}_L(\beta)$ if $f \in \mathcal{C}^k([0, 1]^d)$ and if 
\begin{equation*}
\label{equationBoundedEnergySobolev}
    \sum_{\alpha \in \N^d : |\alpha| = \beta} \int_{[0, 1]^d} |\partial^\alpha f|^2 \leq L^2\;.
\end{equation*}
$\beta$ is referred to as the smoothness parameter of the functional space $\mathcal{S}_L(\beta)$. For real $\beta$'s, Sobolev spaces are defined similarly, except that non-integer derivatives are handled via Hölder remainders (see \Cref{sec:realSobolevSpaces}).

As it is often the case when dealing with Fourier coefficients, it is convenient to define the \emph{periodic} Sobolev space $\mathcal{S}_L^p(\beta)$ by making sure that the functions and their derivatives are compatible with the typical periodicity of the Fourier basis. A function $f \in \mathcal{S}_L(\beta)$ is in $\mathcal{S}_L^p(\beta)$ if for any multi-index $\alpha \in \N^d$ of length at most $\beta$ (strict) and any $x = (x_1, \dots, x_d) \in [0, 1]^d$, $x_i \in \{0, 1\} \implies$
\begin{equation}
\label{equationPeriodicityCondition}
    \partial^\alpha f (x) = \partial^\alpha f (x_1, x_{i-1}, 1- x_{i}, x_{i+1}, \dots, x_d) \;.
\end{equation}
The definition of periodic spaces is identical in the case of real-valued $\beta$'s.

\subsection{Implications on the bias}

The Sobolev-smoothness of $f$ imposes that its Fourier coefficient have a polynomial decrease (see \Cref{lemmaFourierShrinkage}). This property may in turn be used to control the bias of with the following lemma :

\begin{lemma}[Bias of $\hat{f}_M$ with Sobolev assumption]
\label{lemmaBiasUBSobolev}
    For any $M$, if $f \in \mathcal{S}_L^p(\beta)$, then the bias of $f_M$ satisfies
    \begin{equation*}
        \begin{aligned}
            \| f - f_M\|^2
            \leq 
            &\frac{L^2}{(2 \pi)^{2 \beta}} \frac{1}{(M+1)^{2 \beta}}\;.
        \end{aligned}
    \end{equation*}
\end{lemma}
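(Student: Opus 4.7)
The plan is to exploit Parseval's identity twice: once to rewrite the squared bias as a tail sum of squared Fourier coefficients of $f$, and once to trade regularity (boundedness of derivatives) against decay of those coefficients.

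Concretely, because $(\phi_k)_{k \in \Z^d}$ is a Hilbert basis of $L^2([0,1]^d)$ and $f_M$ is the orthogonal projection onto $S_M$, Parseval gives
\begin{equation*}
    \| f - f_M\|^2 = \sum_{k \in \Z^d \setminus \{-M,\dots,M\}^d} |\theta_k|^2,
    \qquad \theta_k \eqdef \langle f, \phi_k\rangle.
\end{equation*}
For any multi-index $\alpha \in \N^d$ with $|\alpha| = \beta$, an iterated integration by parts applied to $\int \partial^\alpha f \cdot \bar{\phi}_k$ yields $\langle \partial^\alpha f, \phi_k\rangle = (2\pi i_{\C})^{|\alpha|}\, k^{\times \alpha}\, \theta_k$. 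The boundary terms vanish because $f \in \mathcal{S}_L^p(\beta)$ satisfies the periodicity condition \eqref{equationPeriodicityCondition} for all derivatives of order $<\beta$, which is precisely what is needed for the boundary contributions produced by the first $\beta$ integrations by parts to cancel. Applying Parseval to $\partial^\alpha f$ then gives
\begin{equation*}
    \|\partial^\alpha f\|^2 \;=\; (2\pi)^{2\beta} \sum_{k \in \Z^d} \prod_{i=1}^d k_i^{2 \alpha_i}\; |\theta_k|^2 .
\end{equation*}

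Summing over all multi-indices with $|\alpha| = \beta$ and using the Sobolev bound, I get
\begin{equation*}
    L^2 \;\geq\; \sum_{|\alpha|=\beta} \|\partial^\alpha f\|^2 \;=\; (2\pi)^{2\beta} \sum_{k \in \Z^d} |\theta_k|^2 \sum_{|\alpha|=\beta} \prod_{i=1}^d k_i^{2 \alpha_i}.
\end{equation*}
The last combinatorial sum is non-negative for every $k$, and it is easy to lower bound it when $k \notin \{-M,\dots,M\}^d$: such a $k$ must have some coordinate $|k_{i_0}| \geq M+1$, and choosing $\alpha$ to be $\beta$ in the $i_0$-th slot and $0$ elsewhere shows $\sum_{|\alpha|=\beta} \prod_i k_i^{2\alpha_i} \geq |k_{i_0}|^{2\beta} \geq (M+1)^{2\beta}$. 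Restricting the sum over $k$ to the complement of $\{-M,\dots,M\}^d$ and plugging the lower bound in gives
\begin{equation*}
    L^2 \;\geq\; (2\pi)^{2\beta} (M+1)^{2\beta} \!\!\!\sum_{k \notin \{-M,\dots,M\}^d}\!\!\! |\theta_k|^2 \;=\; (2\pi)^{2\beta} (M+1)^{2\beta}\, \|f - f_M\|^2,
\end{equation*}
which rearranges to the claimed inequality.

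The main obstacle, and the only step that is not purely algebraic, is justifying the integration-by-parts identity for the Fourier coefficients of $\partial^\alpha f$ in dimension $d$: one must argue that the periodicity condition \eqref{equationPeriodicityCondition} for derivatives of order strictly less than $\beta$ is exactly what makes all boundary contributions vanish across every coordinate direction when pushing $\beta$ derivatives onto $\phi_k$. Once this is established (a straightforward induction on $|\alpha|$ applied coordinate by coordinate), the rest is bookkeeping. The argument generalizes verbatim to real $\beta$ by replacing the pointwise identity for derivatives with the Fourier-analytic characterization developed in \Cref{sec:realSobolevSpaces}, whose estimates (e.g.\ \Cref{prop:Fourier_Sobolev_estimates}) give the same polynomial decay of the Fourier coefficients.
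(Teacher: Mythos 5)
Your proposal is correct and follows essentially the same route as the paper's: the paper first isolates the Fourier-tail inequality as a standalone lemma (Lemma~\ref{lemmaFourierShrinkage}, proved by coordinate-wise integration by parts and Parseval, using the periodicity condition~\eqref{equationPeriodicityCondition} to kill the boundary terms), and then lower-bounds the combinatorial factor $\sum_{|\alpha|=\beta}(2\pi k)^{\times 2\alpha}$ by picking the multi-index concentrated on a coordinate with $|k_{i_0}|\ge M+1$ — which is exactly the two-step argument you give, merely written without the intermediate lemma.
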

\begin{proof}
     See \Cref{proofOfLemmaBiasUBSobolev}.
\end{proof}

In the case of real-valued $\beta$'s, a similar control on the bias is given in \Cref{prop:Fourier_Sobolev_estimates}. Its main conceptual difference with \Cref{lemmaBiasUBSobolev} is that it adds a linear dependence in the dimension.

\subsection{Estimation upper-bound in Sobolev spaces}

Combining \Cref{lemmaBiasUBSobolev} and \Cref{lemmaBiasVarianceDecomposition}, and then optimizing over $M$ yields the following upper-bound for the private statistical estimation in $\mathcal{S}_L^p(\beta)$ :

\begin{theorem}[Upper-bound in $\mathcal{S}_L^p(\beta)$]
\label{theoremUpperBoundSobolevClass}
    There exists a positive $C$ that depends on $\beta$ and $L$ only such that, if $f \in \mathcal{S}_L^p(\beta)$, and if the values $M$ and $\sigma_M$ are tuned as
    \begin{equation*}
        M + 1 = \min \left\{ \left\lfloor \p{n / 2^d}^{\frac{1}{2 \beta + d}} \right\rfloor, \left\lfloor\p{n \sqrt{\rho} / 2^d}^{\frac{1}{\beta + d}} \right\rfloor \right\} \;,
    \end{equation*} 
    and $\sigma_M = \frac{2 \sqrt{(2M+1)^d}}{n\sqrt{\rho}}$, then the mechanism that returns $\hat{f}_M$ is $\rho$-zCDP and its error is bounded as
    \begin{equation*}
        \E \p{\| f - \hat{f}_M\|^2} \leq C  (M+1)^{-2 \beta}.
    \end{equation*}
\end{theorem}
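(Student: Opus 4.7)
The plan is to combine the three ingredients already established, namely \Cref{theoremPrivacyProjectionEstimator} for privacy, \Cref{lemmaBiasVarianceDecomposition} for the general bias-variance decomposition, and \Cref{lemmaBiasUBSobolev} for the Sobolev control on the bias, and then check that the prescribed choice of $M$ balances the three error terms.

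First, privacy is immediate: since $\sigma_M$ is exactly the value required by \Cref{theoremPrivacyProjectionEstimator}, the mechanism returning $\hat{f}_M$ is $\rho$-zCDP, and this holds for any choice of $M$, so it remains valid for the specific $M$ selected here. Next, plugging \Cref{lemmaBiasUBSobolev} into \Cref{lemmaBiasVarianceDecomposition} and substituting the explicit $\sigma_M$ gives
\begin{equation*}
    \E \p{\| f - \hat{f}_M\|^2} \leq \frac{L^2}{(2\pi)^{2\beta}} (M+1)^{-2\beta} + \frac{(2M+1)^d}{n} + \frac{8 (2M+1)^{2d}}{n^2 \rho}.
\end{equation*}

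The core of the proof is then to show that, for the chosen $M$, each of the last two terms is bounded by a constant multiple of $(M+1)^{-2\beta}$. From $M+1 \leq (n/2^d)^{1/(2\beta+d)}$ we deduce $2^d (M+1)^{2\beta+d} \leq n$, which rearranges into $(2(M+1))^d \leq n (M+1)^{-2\beta}$, so that the sampling variance is bounded by $(M+1)^{-2\beta}$ (using $2M+1 \leq 2(M+1)$). Similarly, from $M+1 \leq (n\sqrt{\rho}/2^d)^{1/(\beta+d)}$ we get $2^d (M+1)^{\beta+d} \leq n\sqrt{\rho}$, whence $(2(M+1))^{2d} \leq n^2 \rho \, (M+1)^{-2\beta}$, controlling the privacy variance by $8 (M+1)^{-2\beta}$. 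Adding the three bounds yields the stated inequality with $C = L^2/(2\pi)^{2\beta} + 1 + 8$, which depends only on $\beta$ and $L$.

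I do not expect any serious obstacle: the argument is essentially a standard bias-variance balancing exercise, and the only subtlety is that here three terms compete (squared bias, sampling variance, privacy variance) instead of the usual two. The choice of $M$ as the minimum of the two candidate rates is precisely what guarantees that we lie on the correct side of both balances simultaneously, ensuring that neither the sampling nor the privacy term dominates the bias. The small bookkeeping gymnastics of switching between $2M+1$ and $2(M+1)$ and controlling the floor functions is routine, and does not affect the rate since $M+1 \geq 1$ can be assumed without loss of generality (otherwise the trivial bound from $\| f - \hat f_M\|^2 \leq \| f\|^2 + \| \hat f_M\|^2$ combined with integrability of $f$ handles the degenerate regime, up to adjusting $C$).
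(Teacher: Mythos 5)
Your proof is correct and follows essentially the same route as the paper's: combine \Cref{theoremPrivacyProjectionEstimator} for privacy, \Cref{lemmaBiasVarianceDecomposition} for the error decomposition, \Cref{lemmaBiasUBSobolev} for the bias, substitute the prescribed $\sigma_M$, use $2M+1 \leq 2(M+1)$, and observe that the chosen $M$ makes both variance terms dominated by the squared-bias term $(M+1)^{-2\beta}$. The only cosmetic difference is that you track the constant additively ($C = L^2/(2\pi)^{2\beta} + 9$) whereas the paper hides it in a generic $\square$; the degenerate small-$n$ regime is glossed over in both, so that is not a defect specific to your argument.
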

\begin{proof}
    See \Cref{proofOfTheoremUpperBoundSobolevClass}.
\end{proof}
\Cref{lemmaBiasUBSobolev} and \Cref{prop:Fourier_Sobolev_estimates} are similar enough that the only adaptation to \Cref{theoremUpperBoundSobolevClass} needed to make it work for integer-valued $\beta$'s is to add that $C$ also depends linearly on $d$. In particular, the scaling in $n$ and $\rho$ remains the same.

\section{Lower-bounds and minimax optimality}
\label{sec:lowerbounds}

This section presents lower-bounds on the private estimation in $\mathcal{S}_L^p(\beta)$, and discusses on the role of the different parameters on the difficulty of estimation.

\subsection{Quantitative lower-bound}

We have the following lower-bound, which generalizes the results of \cite{lalanne2023about} in general dimension $d$ :
\begin{theorem}[Lower-bound in $\mathcal{S}_L^p(\beta)$]
\label{theoremLowerBoundSobolev}
    There exist two positive constants $C_1$ and $C_2$ depending on $L$, $\beta$ and $d$ only such that, for any $n$ and $\rho$, if $\hat{f}$ satisfies $\rho$-zCDP, then there exists $f \in \mathcal{S}_L^p(\beta)$ such that
    \begin{equation*}
        \E_{f} \p{\| f - \hat{f}\|^2} 
        \geq
        C_1 
         \max \left\{
        n^{-\frac{2 \beta}{2 \beta  + d}}, (n \sqrt{\rho})^{- \frac{2 \beta}{\beta+d}})
        \right\}
    \end{equation*}
    as soon as $\min \left\{ n, n\sqrt{\rho} \right\} \geq C_2$.
\end{theorem}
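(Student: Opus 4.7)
The plan is to establish the two rates $n^{-2\beta/(2\beta+d)}$ and $(n\sqrt{\rho})^{-2\beta/(\beta+d)}$ separately via a single Assouad-type reduction, the two regimes differing only in the choice of the cut-off $M$. Fix a bump $\psi\in\mathcal{C}^{\infty}(\R^d)$ supported in the open cube $(0,1)^d$, with $\int\psi=0$, $\|\psi\|_2>0$, and $\partial^\alpha\psi\equiv 0$ on $\partial[0,1]^d$ for all $|\alpha|<\beta$, so that all its rescalings satisfy \eqref{equationPeriodicityCondition}. For $M\in\N\setminus\{0\}$ and $\omega\in\{-1,+1\}^{V}$ with $V=M^d$, set $\psi_v(x)\eqdef\psi(Mx-v)$ (these have pairwise disjoint supports) and
\[
  f_\omega \eqdef 1 + \delta\sum_{v\in\{0,\dots,M-1\}^d} \omega_v\,\psi_v
\]
for an amplitude $\delta>0$. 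Disjointness of supports yields $\sum_{|\alpha|=\beta}\|\partial^\alpha(f_\omega-1)\|_2^2 = \delta^2 M^{2\beta}\sum_{|\alpha|=\beta}\|\partial^\alpha \psi\|_2^2$, so that $\delta\leq c_\psi L\,M^{-\beta}$ guarantees $f_\omega\in\mathcal{S}_L^p(\beta)$, and $f_\omega\geq 0$ as long as $\delta\|\psi\|_\infty<1$---a condition enforced by $\min\{n,n\sqrt{\rho}\}\geq C_2$.

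Assouad's lemma then applies: for $\omega\sim\omega'$ at Hamming distance $1$ one has $\|f_\omega-f_{\omega'}\|_2^2=4\delta^2 M^{-d}\|\psi\|_2^2$, and summing over the $V=M^d$ possible flips yields
\[
\inf_{\hat f}\sup_\omega \E_\omega \|f_\omega - \hat f\|^2 \geq c_1\,\delta^2\cdot\p{1 - \max_{\omega\sim\omega'}\tv{\mech{M}\circ f_\omega^{\otimes n}}{\mech{M}\circ f_{\omega'}^{\otimes n}}}.
\]
It remains to bound the neighbor TV, for two distinct choices of $M$. For the non-private rate, $\kl{f_\omega}{f_{\omega'}}\lesssim\|f_\omega-f_{\omega'}\|_2^2\lesssim \delta^2 M^{-d}$ (since $f_{\omega'}$ is bounded away from $0$); tensorizing and Pinsker yield neighbor TV $\lesssim\sqrt{n\delta^2 M^{-d}}$, which is $\leq 1/2$ as soon as $\delta\lesssim M^{d/2}/\sqrt{n}$. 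Balancing this with $\delta\lesssim M^{-\beta}$ through $M\asymp n^{1/(2\beta+d)}$ produces $\delta^2\asymp n^{-2\beta/(2\beta+d)}$.

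For the private rate I would prove a separate coupling lemma. With the optimal coordinate-wise coupling $(\vect{X},\vect{Y})$ of $f_\omega^{\otimes n}$ and $f_{\omega'}^{\otimes n}$ one has $\E[\hamext{\vect{X},\vect{Y}}]\leq n\,\tv{f_\omega}{f_{\omega'}}$, and group privacy for zCDP (a $k^2\rho$ blow-up of the R\'enyi divergence at Hamming distance $k$) combined with Pinsker gives the pointwise bound $\tv{\mech{M}(\vect{X})}{\mech{M}(\vect{Y})}\leq\hamext{\vect{X},\vect{Y}}\sqrt{\rho/2}$; convexity of TV then integrates this to
\[
\tv{\mech{M}\circ f_\omega^{\otimes n}}{\mech{M}\circ f_{\omega'}^{\otimes n}}\leq n\sqrt{\rho/2}\,\tv{f_\omega}{f_{\omega'}}.
\]
Since a single bump has $L^1$-mass of order $M^{-d}$, $\tv{f_\omega}{f_{\omega'}}=\tfrac{1}{2}\|f_\omega-f_{\omega'}\|_1\lesssim \delta M^{-d}$ for neighbors; requiring $n\sqrt{\rho}\,\delta M^{-d}\leq 1/2$ forces $\delta\lesssim M^d/(n\sqrt{\rho})$, and balancing with $\delta\lesssim M^{-\beta}$ through $M\asymp(n\sqrt{\rho})^{1/(\beta+d)}$ gives $\delta^2\asymp (n\sqrt{\rho})^{-2\beta/(\beta+d)}$. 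Taking the maximum of the two lower bounds concludes.

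The main obstacle is precisely this private coupling step: matching the upper bound requires the exponent $1/2$ on $\rho$, and obtaining it forces both measuring the distance between neighbor densities in $L^1$/TV rather than $L^2$/KL (the latter would yield $\delta M^{-d/2}$ instead of $\delta M^{-d}$ and degrade the exponent to $\tfrac{4\beta}{2\beta+d}$) and applying Pinsker \emph{pointwise} before taking the expectation over the coupling, so that $n\,\mathrm{TV}$ appears rather than the weaker $\sqrt{n^2\mathrm{TV}^2+n\mathrm{TV}}$ one would get via Jensen. The threshold $C_2$ ensures $M\geq 2$ and $\delta\|\psi\|_\infty<1$ in both regimes, while $C_1$ absorbs $L$, $\beta$, $d$ and the norms of $\psi$.
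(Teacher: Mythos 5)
Your proof is correct and follows essentially the same route as the paper: an Assouad reduction on a family of bump perturbations of the uniform density, with the non-private rate obtained via Le Cam/Pinsker on the neighbor KL and the private rate via the coupling $+$ group-privacy argument that converts a bound on $n\,\mathrm{TV}(f_\omega,f_{\omega'})$ into a bound on the TV of the output distributions. The only differences are cosmetic: you use a zero-mean bump with signs $\omega\in\{-1,+1\}^{M^d}$, which avoids the paper's explicit recentering by $-\|\theta\|_1\gamma h^{\beta+d}$, and you spell out the coupling/group-privacy step directly rather than invoking the paper's \Cref{lemmalecamassouad}.
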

\begin{proof}
    See \Cref{proofOfTheoremLowerBoundSobolev}.
\end{proof}
For real-valued $\beta$'s, this result also holds. \Cref{sec:adaptation_proof_lower_bound} discusses the adaptation of the proof of \Cref{theoremLowerBoundSobolev} to this more general case.

\Cref{theoremLowerBoundSobolev}, when compared to the upper-bound given in \Cref{theoremUpperBoundSobolevClass} allows concluding that private projection estimators converge at the minimax-optimal rate 
\begin{equation}
    \label{def:rate}
    r_{n,\rho}(\beta) \eqdef  \max \left\{
        n^{-\frac{2 \beta}{2 \beta  + d}}, (n \sqrt{\rho})^{- \frac{2 \beta}{\beta+d}})
        \right\}, \quad \forall \beta >0 \;,
\end{equation}
up to a multiplicative constant depending on $L$, $\beta$ and $d$ only. While the dependence in those quantities is easily explained in the upper-bounds, a caveat of the proof of \Cref{theoremLowerBoundSobolev} is that the dependence is \emph{implicit} by construction, and that no closed-form formula may easily be obtained. 

\subsection{Qualitative implications}
\label{subsec:qualitativediscussion}

From this optimal rate of estimation, we may describe the effects of the different parameters of the privacy-utility tradeoff.

\begin{itemize}
    \item The privacy parameter $\rho$ :
    The two important regimes of estimation are $\rho \gtrsim n^{-\frac{2 \beta}{2 \beta + d}}$ where $\gtrsim$ should be understood as "greater up to a multiplicative constant" and its complement $\rho \ll n^{-\frac{2 \beta}{2 \beta + d}}$. In the first regime, when the level of privacy is not too high compared to the amount of data, privacy comes at a negligible cost on the estimation. On the other hand, in the complementary regime, the utility can be arbitrarily degraded by making $\rho$ arbitrarily small.
    \item The smoothness $\beta$ :
    The higher $\beta$, the smaller the cut-off rate $n^{-\frac{2 \beta}{2 \beta + d}}$. In other words, the smoother the density to estimate, the more private the estimation can be with no significant degradation of utility.
    \item The dimensionality $d$ : 
    Dimensionality has the converse effect on the cut-off rate. The higher the dimension, the more data will be needed to make the effects of privacy negligible. Furthermore, the cut-off itself is affected by the curse of dimensionality.
\end{itemize}

\section{Adaptivity}
\label{sectionAdaptivity}

As seen previously, it is possible to design a private mechanism via projection estimators that is minimax optimal for the class of densities in $\mathcal{S}_L^p(\beta)$ in dimension $d$. 

However, to do so, the optimal cut-off frequency: 
\begin{equation*}
        M_{n,\rho}(\beta) \eqdef \min \left\{ \left\lfloor n ^{\frac{1}{2 \beta + d}} \right\rfloor, \left\lfloor\p{n \sqrt{\rho}}^{\frac{1}{\beta + d}} \right\rfloor \right\} \;
    \end{equation*} 
is chosen based on the knowledge on $n$, $\rho$, $d$ and $\beta$ (see \Cref{theoremUpperBoundSobolevClass}). For the practitioner, the knowledge of $n$, $\rho$ and $d$ is not difficult. The knowledge of $\beta$ on the other hand is a much stronger hypothesis, and it already implies a strong prior knowledge on $f$. This section presents a private estimation strategy that is \emph{adaptive} in the sense that it does not require the prior knowledge of $\beta$, while almost achieving the utility of \Cref{theoremUpperBoundSobolevClass} (up to polylogarithmic factors and negligible terms).

\subsection{A first candidate for private selection}

At first, an idea for private adaptive estimation could be to :
\begin{enumerate}[(i)]
    \item Compute a non-private adaptive estimator of the density with classical methods (like for instance the non-private Lepskii method \cite{lepskii1991adaptivegaussiannoise}).
    \item Then add noise to its Fourier coefficients in order to make it private.
\end{enumerate}
However, there is a trap with this method that one must not fall into : the adaptive truncation rank $\hat{M}$ that is selected by the non-private adaptive method is a quantity that is built from the data, and it may leak user's information. It is thus not possible to simply add noise to the Fourier coefficients of the non-private Fourier coefficients up to truncation rank $\hat{M}$ with magnitude $\sigma_{\hat{M}}$ calibrated as is \Cref{theoremPrivacyProjectionEstimator} and to call the result differentially private. Instead, one must add noise to the Fourier coefficients up to a truncation rank that is either fixed in advance, or that builds on the data, in which case the privacy budget of such will have to be accounted for. The problem with such method is that classical adaptive methods will only try to balance the bias and the sampling variance, but won't account for the privacy variance. In particular, when $\rho$ is small, it is unclear if this method may have the optimal rate of convergence. In the next subsection, we detail the alternative method that we chose, that balances the bias, the sampling variance and the privacy variance \emph{at the same time}, leading to private and adaptive near-optimal estimation.

\subsection{Private and privacy-aware  Lepskii method \label{sec:privacy-adaptive}}

Multiple flavors of the Lepskii method exist in the literature. Here, we present our adaptations of the main two ones to the context of private model selection. We discuss the advantages and the drawbacks of each method.

\subsubsection{Risk penalization}

We introduce the penalized risk (up to a useful log term):
\begin{equation}
    \label{def:penalty_risk}
     r_{n,\rho}(\beta)^* \eqdef  C (\log n)^a r_{n,\rho}(\beta),
\end{equation}
where $C>1$ and $a>0$ are some constants independent from $n$ and $\rho$ that will be specified later on.
We introduce a grid $\mathbb{B}$ on the possible values of $\beta$ that ranges between $0$ and $\log n$, defined by:
\begin{equation}
    \label{def:beta_grid}
    \begin{aligned}
    \mathbb{B}_n := \bigg\{ &\beta_0=\frac{k_n \epsilon}{\log(n)}, \beta_1 = \beta_0 - \frac{\varepsilon}{\log(n)}, \\
    &\quad\quad
    \beta_2 = \beta_1 - \frac{\varepsilon}{\log(n)} , \ldots, \beta_{k_n-1} \ge 0 \bigg\} \;.
    \end{aligned}
\end{equation}
The number of possible values for $\beta$ in $\mathbb{B}_n$ is then denoted by $k_n$ and $k_n = \lfloor \varepsilon^{-1} \log^2 n \rfloor$.

Our Lepskii decision rule is built upon the estimation of the smoothness parameter with the computation of a collection of estimators for several values of $\beta \in \mathbb{B}_n$ and then with a clever selection among theses values with the help of a trade-off criterion. Thanks to \Cref{factCompositionConcentratedDifferentialPrivacy}, to ensure a desired level of privacy of our final estimator, we introduce 
\begin{equation}
\label{eq:rho'}
    \rho'_n = \rho \varepsilon \log^{-2} n.
\end{equation}
We are ready to define our \textit{adaptive} selection rule as:
\begin{equation}
    \label{def:lepski}
    \begin{aligned}
    \hat{m}_n := \inf \bigg\{ &m \ge 0 \, : \quad  \forall \ell \ge m, \\
    &\|\hat{f}_{M_{n,\rho'_n}(\beta_m)}-\hat{f}_{M_{n,\rho'_n}(\beta_\ell)}\|_2^2 \leq 
    r_{n,\rho'_n}(\beta_{\ell})^*
    \bigg\}
    \end{aligned}
\end{equation}

For the sake of clarity, we will use the following shortcut of notations to improve the readability of our paper:
$$
  \hat{f}_{\hat{M}} \eqdef \hat{f}_{M_{n,\rho'_n}(\beta_{\hat{m}_n})} \quad \text{and} \quad 
\hat{M} \eqdef M_{n,\rho'_n}(\beta_{\hat{m}_n}) \;.
$$

We establish the following result.
\begin{theorem}
    \label{theo:Lepskii_rate}
Assume that $a\ge 1$, $C \ge 8L^2 \vee 2^{2d+9}$ and $n\ge 3$, $\rho \leq 1$, if $\hat{f}_{\hat{M}}$ is the adaptive estimator selected with the Lepskii rule, then $\hat{f}_{\hat{M}}$ is $\rho$-zCDP and it satisfies the risk upper bound
    \begin{equation*}
        \begin{aligned}
\mathbb{E}&\p{\|\hat{f}_{\hat{M}} -f\|_2} \leq 2 \sqrt{   r_{n,\rho_n'}(\beta)^*  } 
  \exp\left( \frac{\varepsilon}{\beta+ d}\right) \\&+ 
  \sqrt{8 (2+d) } \varepsilon^{-3/2}   \left( 1+ {\rho'_n}^{-\frac{ 1 }{2(1  + d)}}\right) \log^2 n n^{-2}
  \end{aligned}
\end{equation*}
\end{theorem}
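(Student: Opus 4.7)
The plan has two components: establishing the privacy budget, then proving a Lepskii-type oracle inequality for the adaptive risk.

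\textbf{Privacy.} Each intermediate estimator $\hat{f}_{M_{n,\rho'_n}(\beta_m)}$ is $\rho'_n$-zCDP by \Cref{theoremPrivacyProjectionEstimator}, since its Gaussian noise is calibrated to $\rho'_n$. The collection indexed by $\mathbb{B}_n$ contains $k_n = \lfloor \varepsilon^{-1}\log^2 n \rfloor$ mechanisms, so \Cref{factCompositionConcentratedDifferentialPrivacy} yields $k_n \rho'_n$-zCDP, and the choice of $\rho'_n$ in \eqref{eq:rho'} guarantees $k_n \rho'_n \leq \rho$. The adaptive selection $\hat{m}_n$ and the resulting $\hat{f}_{\hat M}$ are deterministic functions of this release, hence $\rho$-zCDP by post-processing.

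\textbf{Oracle index and main term.} Let $m^*$ be the smallest index in $\mathbb{B}_n$ with $\beta_{m^*} \leq \beta$, so that $0 \leq \beta - \beta_{m^*} \leq \varepsilon/\log n$. Using $n^{\varepsilon/\log n} = e^\varepsilon$, a short computation on the exponents appearing in \Cref{def:rate} yields $r_{n,\rho'_n}(\beta_{m^*}) \leq \exp(2\varepsilon/(\beta+d))\, r_{n,\rho'_n}(\beta)$, hence after taking a square root the factor $\exp(\varepsilon/(\beta+d))$ of the announced bound. I then apply the triangle inequality
$$\|\hat{f}_{\hat M} - f\|_2 \leq \|\hat{f}_{\hat M} - \hat{f}_{M_{n,\rho'_n}(\beta_{m^*})}\|_2 + \|\hat{f}_{M_{n,\rho'_n}(\beta_{m^*})} - f\|_2,$$
and split the expectation on the events $A = \{\hat m_n \leq m^*\}$ and $B = A^c$. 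On $A$, the Lepskii rule \Cref{def:lepski} at $\ell = m^*$ bounds the first summand by $\sqrt{r_{n,\rho'_n}(\beta_{m^*})^*}$, while \Cref{theoremUpperBoundSobolevClass} bounds $\mathbb{E}\|\hat{f}_{M_{n,\rho'_n}(\beta_{m^*})} - f\|_2^2$ by a constant multiple of $r_{n,\rho'_n}(\beta_{m^*})$; after Jensen and absorbing the multiplicative constant into $C(\log n)^a$ (using $C \geq 8L^2$), both summands contribute at most $\sqrt{r_{n,\rho'_n}(\beta_{m^*})^*}$. The rate comparison above then yields the announced leading term $2\sqrt{r_{n,\rho'_n}(\beta)^*}\exp(\varepsilon/(\beta+d))$.

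\textbf{Residual term and main obstacle.} On $B$, the rejection of $m^*$ by the Lepskii rule means that there exists $\ell \geq m^*$ with $\|\hat{f}_{M_{n,\rho'_n}(\beta_{m^*})} - \hat{f}_{M_{n,\rho'_n}(\beta_\ell)}\|_2^2 > r_{n,\rho'_n}(\beta_\ell)^*$, which via $\|a-b\|^2 \leq 2(\|a-f\|^2 + \|b-f\|^2)$ forces a large-deviation event of the form $\|\hat{f}_{M_{n,\rho'_n}(\beta_j)} - f\|_2^2 > r_{n,\rho'_n}(\beta_j)^*/4$ for some $j \in \{m^*,\dots,k_n-1\}$. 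Combining \Cref{lemmaBiasVarianceDecomposition} with Gaussian tail bounds on the privacy noise $\xi_k$ and Hoeffding-type concentration for the empirical Fourier coefficients $\tilde\theta_k$ (whose increments are bounded by $2/n$), each such deviation has polynomially small probability; a union bound over the $k_n$ scales together with $a \geq 1$ and $C \geq 8L^2 \vee 2^{2d+9}$ drives $\mathbb{P}(B)$ down to $O(n^{-4})$. The error on $B$ is then controlled deterministically by the worst-case $\|\hat{f}_M - f\|_2$ over the grid, which is dominated by the Gaussian noise term of \Cref{lemmaBiasVarianceDecomposition} and, at the noise-heaviest model in $\mathbb{B}_n$, scales as $(\rho'_n)^{-1/(2(1+d))}$ times a polylog. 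A Cauchy--Schwarz step then delivers the residual $\sqrt{8(2+d)}\,\varepsilon^{-3/2}(1 + (\rho'_n)^{-1/(2(1+d))})\log^2 n \cdot n^{-2}$. The technical crux is this last balance: the constants $C$ and $a$ inside $r^*$ must be large enough that the simultaneous concentration at $k_n \asymp \log^2 n$ scales kills the potentially large noise-dominated worst-case $L^2$ bound, while remaining small enough that the $(\log n)^a$ inflation of the leading term is only polylogarithmic.
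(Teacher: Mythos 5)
Your plan reproduces the paper's proof: decompose $\mathbb{E}\|\hat f_{\hat M}-f\|_2$ over the event $\{\hat m_n \le m^*\}$ and its complement, control the first piece via the Lepskii rule together with the rate comparison $r_{n,\rho'_n}(\beta_{m^*}) \le e^{2\varepsilon/(\beta+d)}\,r_{n,\rho'_n}(\beta)$, and control the second via Cauchy--Schwarz combined with a union bound over the grid using Hoeffding concentration for the empirical Fourier coefficients $\tilde\theta_k$ and $\chi^2$ concentration for the privacy noise. The one soft spot is your phrase that the error on $B$ is ``controlled deterministically by the worst-case $\|\hat f_M - f\|_2$ over the grid'': since the privacy noise is Gaussian, $\|\hat f_M - f\|_2$ has unbounded tails and admits no deterministic bound, so the paper instead bounds $\mathbb{E}\|\hat f_{\hat M}-f\|_2^2 \le \sum_{\ell}\mathbb{E}\|\hat f_{M(\ell)}-f\|_2^2 \le \sum_\ell r_{n,\rho'_n}(\beta_\ell)$ (\Cref{prop:risk_series}), a sum of expected risks over all grid scales, before applying Cauchy--Schwarz.
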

\begin{proof}
    See \Cref{proofoftheo:Lepskii_rate}
\end{proof}

\paragraph{Comments.} This result shows that out of the box (i.e. without additional assumptions on $f$), $\hat{f}_{\hat{M}}$ nearly matches the optimal speed of estimation up to negligible terms, and by excluding the fact that we did not take the error squared, but simply the error in $L^2$ distance.

\subsubsection{Penalization of the estimated bias}

Let $\mathcal{M}$ be the collection of spectral cut-offs. The following method describes how to choose $\hat{M}$, and the associated $\hat{f}_{\hat{M}}$. We start by estimating $\tilde{f}_M$ and $\hat{f}_M$ for any $M \in \mathcal{M}$ with $\sigma_M = \frac{\sqrt{2 (2M+1)^d}}{n\sqrt{\rho'}}$ where $\rho'$ is tuned to obtain $\rho$-zCDP in the end as $\rho' = \frac{\rho}{|\mathcal{M}|}$. 

Then for any $M$, we define the following estimator of the squared bias of $\hat{f}_M$:
\begin{equation}
    B^2(M) \eqdef \max_{M' \in \mathcal{K}} \left\{ \|\proj_{S_{M'}}(\hat{f}_M) - \hat{f}_{M'} \|^2 - \Lambda^{(1)}(M') \right\}
\end{equation}
where $\Lambda^{(1)}(\cdot)$ is a penalization term that is fixed later. 
Then, $\hat{M}$ is chosen as the minimizer of the penalized estimated squared bias.
\begin{equation}
    \hat{M} \eqdef \argmin_{M \in \mathcal{M}} \left\{ B^2(M) +  \Lambda^{(2)}(M)\right\} \;.
\end{equation}
Again, $\Lambda^{(2)}(\cdot)$ is a penalization term that is fixed later on.

$\hat{f}_{\hat{M}}$ satisfies the following oracle inequality :
\begin{theorem}[]
\label{theoremAdaptiveSelectionUtility}
When computed with $\sigma_M = \frac{2 \sqrt{(2M+1)^d}}{n\sqrt{\rho / |\mathcal{M}|}}$ for any $M \in \mathcal{M}$, the mechanism that releases $\hat{f}_{\hat{M}}$ satisfies $\rho$-zCDP. Furthermore, there exist two absolute constants $C_1>0$ and $C_3>0$ and a quantity $C_2 > 0$ depending only on $\|f\|_{\infty}$\footnote{This dependence arises because of a technical argument in the proof.} such that, if for any $M$, 
\begin{equation*}
    \Lambda^{(1)}(M) =  \frac{96 (2M+1)^d}{n} + \frac{96 (2M+1)^{2d}}{n^2 \rho / |\mathcal{M}|}
\end{equation*}
and 
\begin{equation*}
    \Lambda^{(2)}(M) = \Lambda^{(1)}(M) + \frac{16 (2M+1)^{2d}}{n^2 \rho / |\mathcal{M}|} \;,
\end{equation*}
and if $(2 \max \mathcal{M} + 1)^d \leq n$,
then $\hat{f}_{\hat{M}}$ satisfies 
\begin{equation}
\begin{aligned}
    \E \bigg(&\|f - \hat{f}_{\hat{M}} \|^2\bigg)\leq \\
    &
    C_1 \min_{M \in \mathcal{M}}
    \bigg\{
    \| f - f_M\|^2 
    + \frac{ (2M + 1)^d}{n} \\
    &\quad\quad\underbrace{\quad\quad\quad\quad\quad\quad\quad\quad\quad\quad+ 2 (2M + 1)^d \sigma_M^2
    \bigg\} }_{\text{\blue{Best bias-variance tradeoff in $\mathcal{M}$ with privacy budget $\frac{\rho}{|\mathcal{M}|}$}}}\\
    &\quad\quad+  \underbrace{\frac{C_2}{n}}_{\green{\text{Sampling residual}}}
    +  \underbrace{\frac{C_3 |\mathcal{M}|}{ n^2 \rho}}_{\red{\text{Privacy residual}}} \;.
\end{aligned}
\end{equation}
\end{theorem}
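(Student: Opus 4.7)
The proof splits into three steps: privacy, a deterministic oracle inequality driven by the defining property of $\hat M$, and a concentration estimate for $\E[B^2(M^*)]$.

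\textbf{Privacy.} Each $\hat f_M$ with $M \in \mathcal M$ is $\rho'$-zCDP for $\rho' = \rho/|\mathcal M|$ by \Cref{theoremPrivacyProjectionEstimator}. Since the Gaussian privacy noises used for distinct $M$ are independent, adaptive composition (\Cref{factCompositionConcentratedDifferentialPrivacy}) makes the tuple $(\hat f_M)_{M \in \mathcal M}$ jointly $\rho$-zCDP, and both the selector $\hat M$ and the released $\hat f_{\hat M}$ are post-processings of this tuple, hence $\rho$-zCDP.

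\textbf{Oracle inequality.} For any comparison $M^* \in \mathcal M$, I split $\|f - \hat f_{\hat M}\|^2 \leq 2\|f - \hat f_{M^*}\|^2 + 2\|\hat f_{\hat M} - \hat f_{M^*}\|^2$; \Cref{lemmaBiasVarianceDecomposition} controls the expectation of the first summand by $\|f-f_{M^*}\|^2 + (2M^*+1)^d/n + 2(2M^*+1)^d \sigma_{M^*}^2$. The second summand is where the $B^2$ construction pays off, through the nestedness $S_m \subseteq S_M$ for $m \leq M$. If $\hat M \leq M^*$ then $\hat f_{\hat M} \in S_{M^*}$, so $\proj_{S_{M^*}}(\hat f_{\hat M}) = \hat f_{\hat M}$, and plugging $M' = M^*$ into the defining max of $B^2(\hat M)$ yields $\|\hat f_{\hat M} - \hat f_{M^*}\|^2 \leq B^2(\hat M) + \Lambda^{(1)}(M^*)$; the symmetric case $\hat M \geq M^*$ gives $\|\hat f_{\hat M} - \hat f_{M^*}\|^2 \leq B^2(M^*) + \Lambda^{(1)}(\hat M)$. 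Summing the (nonnegative) right-hand sides covers both cases:
\[
\|\hat f_{\hat M} - \hat f_{M^*}\|^2 \leq B^2(\hat M) + B^2(M^*) + \Lambda^{(1)}(\hat M) + \Lambda^{(1)}(M^*).
\]
The defining property of $\hat M$ gives $B^2(\hat M) \leq B^2(M^*) + \Lambda^{(2)}(M^*) - \Lambda^{(2)}(\hat M)$, and the choice $\Lambda^{(2)} \geq \Lambda^{(1)}$ built into the statement absorbs the $\hat M$-dependent terms, yielding $\|\hat f_{\hat M} - \hat f_{M^*}\|^2 \leq 2 B^2(M^*) + \Lambda^{(2)}(M^*) + \Lambda^{(1)}(M^*)$. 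Taking expectations reduces everything to bounding $\E[B^2(M^*)]$.

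\textbf{Controlling $\E[B^2(M^*)]$.} I bound the max by the sum over $M' \in \mathcal M$ of positive parts and split each summand via triangle inequality into a deterministic piece $\|\proj_{S_{M'}}(f_{M^*}) - f_{M'}\|^2$ and a noise piece $\|\proj_{S_{M'}}(\epsilon_{M^*}) - \epsilon_{M'}\|^2$, where $\epsilon_M := \hat f_M - f_M$. The deterministic piece vanishes for $M' \leq M^*$ and equals $\|f_{M'} - f_{M^*}\|^2 \leq \|f - f_{M^*}\|^2$ otherwise (Pythagoras in the nested projections), so it is harmlessly absorbed into the oracle bias. The noise piece is a sum over Fourier indices of squared moduli of centered bounded complex sampling variables (per-coordinate variance $O(1/n)$) plus an independent complex Gaussian quadratic form in the privacy noises (expected mass $O((2M'+1)^d(\sigma_{M^*}^2 + \sigma_{M'}^2))$); its expectation matches $\Lambda^{(1)}(M')$ up to the factor-$96$ slack wired into the penalty. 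That slack is what feeds a Bernstein deviation for the sampling quadratic form, where $\|f\|_\infty$ enters the variance proxy through $\E|\phi_k(X)|^2 \leq \|f\|_\infty$ and $(2\max\mathcal M + 1)^d \leq n$ keeps the Bernstein variance of the right order, and a Hanson--Wright / $\chi^2$-type deviation for the independent Gaussian part; combining the two yields $\E[(\|\cdot\|^2 - \Lambda^{(1)}(M'))_+] \lesssim \|f\|_\infty/n + |\mathcal M|/(n^2 \rho)$ uniformly in $M'$, and summing over $\mathcal M$ produces the announced residuals $C_2/n$ and $C_3 |\mathcal M|/(n^2 \rho)$. The main obstacle is exactly this concentration step: combining Bernstein and Hanson--Wright on the two independent noise sources, uniformly in $M' \in \mathcal M$, while checking that the hard-coded constants $96$ and $16$ chosen in $\Lambda^{(1)}$ and $\Lambda^{(2)}$ really do dominate the fluctuations.
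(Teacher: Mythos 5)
Your overall architecture matches the paper's: privacy via composition plus post-processing, a deterministic oracle inequality driven by the selection rule, and a residual control via concentration of the sampling and privacy noise with the penalty constants chosen to dominate. But there are two places where your route departs from the paper's and, in one of them, leaves a genuine gap.

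\textbf{Oracle inequality.} Your derivation exploits the nestedness $S_m \subseteq S_M$ to split into the cases $\hat M \le M^*$ and $\hat M \ge M^*$ and avoid the cross-projection term that the paper carries (what it calls ``Extra term 1'', penalized by $\Delta_{\mathrm{priv}}$). This is a genuine simplification and essentially works, but your ``summing the (nonnegative) right-hand sides'' step is not quite right: $B^2(M)$ can be negative (taking $M' = M$ in the defining $\max$ gives $B^2(M) \ge -\Lambda^{(1)}(M)$, with equality when all competitors are dominated), so in the case $\hat M \le M^*$ the added term $B^2(M^*) + \Lambda^{(1)}(\hat M)$ can be strictly negative and the summed bound is not automatically weaker than the case bound. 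One fix is to carry $(B^2(M^*))_+$ through, or to verify the final $M^*$-only bound directly in each case; either way a little more care is needed, and you would then also need to check that in the case $\hat M > M^*$ the random term $\Lambda^{(1)}(\hat M)$ is in fact controlled by the selection rule --- the rule only controls $B^2(\hat M) + \Lambda^{(2)}(\hat M)$, and $\Lambda^{(1)}(\hat M)$ contains a sampling part that is not dominated by $\Delta_{\mathrm{priv}}(\hat M)$. The paper sidesteps all of this by working with the cross-projections and paying for ``Extra term 1'' explicitly.

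\textbf{Concentration of the sampling fluctuation.} This is where I think you have a real gap. The key quantity is $\sum_{M'} \E\bigl[(\|\tilde f_{M'} - f_{M'}\|^2 - c(2M'+1)^d/n)_+\bigr]$, i.e.\ the upper tail of the \emph{squared $L^2$ norm of the empirical Fourier fluctuation}, which is a degenerate quadratic form in the $X_j$'s: its $(2M'+1)^d$ Fourier summands all involve the same sample, so they are not independent and plain Bernstein for sums of independent terms does not apply. The paper handles this by writing $\|\tilde f_{M'} - f_{M'}\| = \sup_{\|g\|\le 1}|\nu_n(T_g^{(M')})|$ and invoking Talagrand's inequality for suprema of empirical processes (\Cref{factTalagrandInequality}), with envelope $M_1 = \sqrt{(2M'+1)^d} \le \sqrt n$ (this is exactly why the hypothesis $(2\max\mathcal M + 1)^d \le n$ is needed) and weak variance $v = \|f\|_\infty$. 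Note in particular that your stated source of the $\|f\|_\infty$ dependence, ``$\E|\phi_k(X)|^2 \le \|f\|_\infty$'', is incorrect --- $|\phi_k| \equiv 1$, so $\E|\phi_k(X)|^2 = 1$ regardless of $f$. The dependence actually comes from bounding $\sup_{\|g\| \le 1}\operatorname{Var}(T_g^{(M')}(X_1)) \le \|f\|_\infty \|T_g^{(M')}\|^2 \le \|f\|_\infty$, which is where the density (not the basis functions) enters. A Hanson--Wright/$\chi^2$ bound for the privacy noise is fine and matches the paper's \Cref{factChiSquaredConcentration}, but ``Bernstein on the sampling quadratic form'' needs to be replaced by Talagrand (or a Bernstein-type inequality for degenerate U-statistics stated explicitly and applied with the correct variance proxy).

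One more small point: your decomposition of the noise piece into ``sampling variables plus a Gaussian quadratic form'' needs an explicit triangle inequality between $\tilde f_M - f_M$ and $\hat f_M - \tilde f_M$; the squared norm of a sum has cross terms, and the paper handles this by keeping the two contributions separate from the start.
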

\begin{proof}
    See \Cref{proofOfTheoremAdaptiveSelectionUtility}.
\end{proof}
Since the bias is left uncontrolled in this result, it remains true in the case of real-valued $\beta$'s.

When the collection of spectral cut-offs $\mathcal{M}$ is adequately chosen, this oracle inequality may be used to prove near-optimal convergence speed.
\begin{theorem}
\label{theoremAdaptivePrivateUtility}
    There exist a $C_1 >0$ depending on $\beta$ and $L$, and a $C_2 > 0$ depending on $\beta$, $d$ and $\|f\|_{\infty}$ such that, if $\min \{n, n \sqrt{\rho / \log_2\p{n}} \} \geq C_2$, then $\hat{f}_{\hat{M}}$ computed with 
    \begin{equation*}
        \mathcal{M} = \left\{ 1, 2, 4, \dots, 2^{\left\lfloor \log_2\p{\frac{n^{1/d} - 1}{2}} \right\rfloor} \right\}
    \end{equation*}
    and all the other hyperparameters set as in \Cref{theoremAdaptiveSelectionUtility} is $\rho$-zCDP and its utility satisfies
    \begin{equation}
    \begin{aligned}
        \E \Bigg(\|f - &\hat{f}_{\hat{M}} \|^2\bigg)\leq \\
        &C_1 \max \Bigg\{ n^{-\frac{2 \beta}{2 \beta  + d}} , \p{\frac{n \sqrt{\rho}}{\sqrt{\log_2\p{n}}}}^{- \frac{2 \beta}{\beta+d}} \Bigg\} \;.
        \end{aligned}
    \end{equation}
\end{theorem}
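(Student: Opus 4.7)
The plan is to deduce \Cref{theoremAdaptivePrivateUtility} directly from the oracle inequality of \Cref{theoremAdaptiveSelectionUtility} by picking an appropriate $M$ in the dyadic grid. First I would combine that oracle inequality with the Sobolev bias control of \Cref{lemmaBiasUBSobolev} and the explicit value of $\sigma_M^2 = 4(2M+1)^d/(n^2 \rho/|\mathcal{M}|)$. Noting that $|\mathcal{M}| \le \frac{1}{d}\log_2 n + 1 = O(\log_2 n)$, every $M \in \mathcal{M}$ then satisfies
\begin{equation*}
\|f-f_M\|^2 + \frac{(2M+1)^d}{n} + 2(2M+1)^d\sigma_M^2 \;\lesssim\; M^{-2\beta} + \frac{M^d}{n} + \frac{|\mathcal{M}|\, M^{2d}}{n^2\rho},
\end{equation*}
with a multiplicative constant depending only on $L$, $\beta$ and $d$.

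Next I would optimise the right-hand side over a continuous $M>0$: the minimiser is, up to constants,
\begin{equation*}
M^\star \;\asymp\; \max\!\left\{ n^{\frac{1}{2\beta+d}},\, \bigl(n\sqrt{\rho/|\mathcal{M}|}\bigr)^{\frac{1}{\beta+d}}\right\},
\end{equation*}
and evaluating the expression at $M^\star$ exactly produces $\max\{n^{-2\beta/(2\beta+d)}, (n\sqrt{\rho/|\mathcal{M}|})^{-2\beta/(\beta+d)}\}$, which is $r_{n,\rho/|\mathcal{M}|}(\beta)$. Because $\mathcal{M}=\{1,2,4,\dots,2^K\}$ with $K=\lfloor\log_2\!\tfrac{n^{1/d}-1}{2}\rfloor$ is dyadic, whenever $M^\star\in[1,2^K]$ there exists $M\in\mathcal{M}$ with $M^\star\le M\le 2M^\star$, so substituting this $M$ into the oracle inequality loses only a factor of $2^{2\beta\vee 2d}$. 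The condition $\min\{n,\, n\sqrt{\rho/\log_2 n}\}\ge C_2$, for a suitable $C_2=C_2(\beta,d)$, is precisely what is needed to guarantee both $M^\star\ge 1$ and $M^\star\le 2^K$ (the hypothesis $(2\max\mathcal{M}+1)^d\le n$ of \Cref{theoremAdaptiveSelectionUtility} is then automatically satisfied as well).

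Finally I would absorb the two residuals. On the one hand, $C_2/n \le C_2 \, n^{-2\beta/(2\beta+d)}$ since $2\beta/(2\beta+d)<1$. On the other hand, $C_3|\mathcal{M}|/(n^2\rho)\lesssim (\log_2 n)/(n^2\rho)$, and because $\beta/(\beta+d)\le 1$ and $(n\sqrt{\rho/\log_2 n})\ge 1$ under the standing hypothesis, one has
\begin{equation*}
\frac{\log_2 n}{n^2\rho} \;=\; \Bigl(\tfrac{\log_2 n}{n^2\rho}\Bigr)^{1} \;\le\; \Bigl(\tfrac{\log_2 n}{n^2\rho}\Bigr)^{\beta/(\beta+d)} \;=\; \bigl(n\sqrt{\rho/\log_2 n}\bigr)^{-2\beta/(\beta+d)},
\end{equation*}
so the privacy residual is dominated by the second branch of the target rate. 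Gathering the three contributions yields the announced bound, and $\rho$-zCDP is inherited from \Cref{theoremAdaptiveSelectionUtility}.

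The main obstacle is bookkeeping rather than conceptual: one must carefully verify that the ideal dyadic $M$ lies in the grid (which fixes the constant $C_2$ and explains the dependence on $\|f\|_\infty$ inherited via $C_2$ of \Cref{theoremAdaptiveSelectionUtility}), and one must track how the $|\mathcal{M}|\asymp\log_2 n$ factor, injected into the per-model privacy budget $\rho'=\rho/|\mathcal{M}|$, reappears as the $\sqrt{\log_2 n}$ factor in the denominator of the second branch of the rate — this is the unavoidable price of performing selection by composition rather than by an exponential-mechanism-style argument.
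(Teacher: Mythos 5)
Your proposal is correct and follows essentially the same route as the paper: apply the oracle inequality of \Cref{theoremAdaptiveSelectionUtility}, optimise the bias--variance tradeoff over a continuous $M$, and then round to the nearest dyadic element of $\mathcal{M}$, accepting a constant-factor loss. The only differences are minor: you round the ideal $M^\star$ \emph{up} to the nearest dyadic point (paying a factor $\lesssim 2^{2d}$ in the variance terms while the bias decreases), whereas the paper rounds \emph{down} (paying a factor $\lesssim 2^{2\beta}$ in the bias while the variance decreases); both choices are equally valid. Note also a small typo: the continuous optimiser of $M^{-2\beta}+M^d/n+|\mathcal{M}|M^{2d}/(n^2\rho)$ is the $\min$, not the $\max$, of $n^{1/(2\beta+d)}$ and $(n\sqrt{\rho/|\mathcal{M}|})^{1/(\beta+d)}$ --- evaluating at the larger of the two would leave the dominated variance term uncontrolled; the rest of your computation and the resulting rate are consistent with the correct $\min$. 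Your residual-absorption arguments and the observation that $(2\max\mathcal{M}+1)^d\le n$ holds automatically are exactly what the paper implicitly uses.
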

\begin{proof}
    See \Cref{proofOfTheoremAdaptivePrivateUtility}.
\end{proof}
Because of the extra dimensionality term in the control of the bias in the case of real-valued $\beta$'s, this result remains true in this case if one adds that $C_1$ also depends on $d$.

\paragraph{Comments.} Contrary tho the last procedure, this new one is near-optimal in terms of \emph{squared} error, at the cost of the control of $\|f\|_{\infty}$. As explained before, this requirement comes from a technical detail in the proof, and it might be an artifact of a suboptimal analysis from us. Also, the polylogarithmic degradation only affects the privacy term.

\section*{Impact Statement}
This paper presents work whose goal is to advance the field of Machine Learning. There are many potential societal consequences of our work, none which we feel must be specifically highlighted here.

\section*{Acknowledgements}
Clément Lalanne acknowledges the help of Aurélien Garivier and Rémi Gribonval on previous work on which this article builds on. The authors would like to thank Jérôme Bolte for the interesting discussions regarding the geometrical interpretations of differential privacy. This article was funded by the ANR MaSDOL and the Centre Lagrange.



\bibliography{biblio}

\begin{thebibliography}{100}
\providecommand{\natexlab}[1]{#1}
\providecommand{\url}[1]{\texttt{#1}}
\expandafter\ifx\csname urlstyle\endcsname\relax
  \providecommand{\doi}[1]{doi: #1}\else
  \providecommand{\doi}{doi: \begingroup \urlstyle{rm}\Url}\fi

\bibitem[Abowd(2018)]{abowd2018us}
Abowd, J.~M.
\newblock The us census bureau adopts differential privacy.
\newblock In \emph{Proceedings of the 24th ACM SIGKDD International Conference
  on Knowledge Discovery \& Data Mining}, pp.\  2867--2867, 2018.

\bibitem[Acharya et~al.(2021{\natexlab{a}})Acharya, Canonne, Singh, and
  Tyagi]{AcharyaCST21a}
Acharya, J., Canonne, C., Singh, A.~V., and Tyagi, H.
\newblock Optimal rates for nonparametric density estimation under
  communication constraints.
\newblock In Ranzato, M., Beygelzimer, A., Dauphin, Y., Liang, P., and Vaughan,
  J.~W. (eds.), \emph{Advances in Neural Information Processing Systems},
  volume~34, pp.\  26754--26766. Curran Associates, Inc., 2021{\natexlab{a}}.
\newblock URL
  \url{https://proceedings.neurips.cc/paper_files/paper/2021/file/e1021d43911ca2c1845910d84f40aeae-Paper.pdf}.

\bibitem[Acharya et~al.(2021{\natexlab{b}})Acharya, Canonne, Freitag, Sun, and
  Tyagi]{AcharyaCFST21}
Acharya, J., Canonne, C.~L., Freitag, C., Sun, Z., and Tyagi, H.
\newblock Inference under information constraints iii: Local privacy
  constraints.
\newblock \emph{IEEE Journal on Selected Areas in Information Theory},
  2\penalty0 (1):\penalty0 253--267, 2021{\natexlab{b}}.
\newblock \doi{10.1109/JSAIT.2021.3053569}.
\newblock URL \url{https://doi.org/10.1109/JSAIT.2021.3053569}.

\bibitem[Acharya et~al.(2021{\natexlab{c}})Acharya, Canonne, Mayekar, and
  Tyagi]{AcharyaCMT21}
Acharya, J., Canonne, C.~L., Mayekar, P., and Tyagi, H.
\newblock Information-constrained optimization: can adaptive processing of
  gradients help?
\newblock \emph{CoRR}, abs/2104.00979, 2021{\natexlab{c}}.
\newblock URL \url{https://arxiv.org/abs/2104.00979}.

\bibitem[Acharya et~al.(2021{\natexlab{d}})Acharya, Canonne, Sun, and
  Tyagi]{AcharyaCST21b}
Acharya, J., Canonne, C.~L., Sun, Z., and Tyagi, H.
\newblock Unified lower bounds for interactive high-dimensional estimation
  under information constraints.
\newblock \emph{CoRR}, abs/2010.06562, 2021{\natexlab{d}}.
\newblock URL \url{https://arxiv.org/abs/2010.06562}.

\bibitem[Acharya et~al.(2021{\natexlab{e}})Acharya, Sun, and
  Zhang]{acharya2021differentially}
Acharya, J., Sun, Z., and Zhang, H.
\newblock Differentially private {A}ssouad, {F}ano, and {L}e {C}am.
\newblock In Feldman, V., Ligett, K., and Sabato, S. (eds.), \emph{Algorithmic
  Learning Theory, 16-19 March 2021, Virtual Conference, Worldwide}, volume 132
  of \emph{Proceedings of Machine Learning Research}, pp.\  48--78. {PMLR},
  2021{\natexlab{e}}.
\newblock URL \url{http://proceedings.mlr.press/v132/acharya21a.html}.

\bibitem[Aden{-}Ali et~al.(2021)Aden{-}Ali, Ashtiani, and
  Kamath]{adenali2021unbounded}
Aden{-}Ali, I., Ashtiani, H., and Kamath, G.
\newblock On the sample complexity of privately learning unbounded
  high-dimensional gaussians.
\newblock In Feldman, V., Ligett, K., and Sabato, S. (eds.), \emph{Algorithmic
  Learning Theory, 16-19 March 2021, Virtual Conference, Worldwide}, volume 132
  of \emph{Proceedings of Machine Learning Research}, pp.\  185--216. {PMLR},
  2021.
\newblock URL \url{http://proceedings.mlr.press/v132/aden-ali21a.html}.

\bibitem[Akaike(1998)]{Akaike1998}
Akaike, H.
\newblock \emph{Information Theory and an Extension of the Maximum Likelihood
  Principle}, pp.\  199--213.
\newblock Springer New York, New York, NY, 1998.
\newblock ISBN 978-1-4612-1694-0.
\newblock \doi{10.1007/978-1-4612-1694-0_15}.
\newblock URL \url{https://doi.org/10.1007/978-1-4612-1694-0_15}.

\bibitem[Backstrom et~al.(2007)Backstrom, Dwork, and
  Kleinberg]{backstrom2007wherefore}
Backstrom, L., Dwork, C., and Kleinberg, J.~M.
\newblock Wherefore art thou r3579x?: anonymized social networks, hidden
  patterns, and structural steganography.
\newblock In Williamson, C.~L., Zurko, M.~E., Patel{-}Schneider, P.~F., and
  Shenoy, P.~J. (eds.), \emph{Proceedings of the 16th International Conference
  on World Wide Web, {WWW} 2007, Banff, Alberta, Canada, May 8-12, 2007}, pp.\
  181--190. {ACM}, 2007.
\newblock \doi{10.1145/1242572.1242598}.
\newblock URL \url{https://doi.org/10.1145/1242572.1242598}.

\bibitem[Barber \& Duchi(2014)Barber and Duchi]{barber2014privacy}
Barber, R.~F. and Duchi, J.~C.
\newblock Privacy and statistical risk: Formalisms and minimax bounds.
\newblock \emph{CoRR}, abs/1412.4451, 2014.
\newblock URL \url{http://arxiv.org/abs/1412.4451}.

\bibitem[Barnes et~al.(2019)Barnes, Han, and Ozgur]{BarnesHO19}
Barnes, L.~P., Han, Y., and Ozgur, A.
\newblock Fisher information for distributed estimation under a blackboard
  communication protocol.
\newblock In \emph{2019 IEEE International Symposium on Information Theory
  (ISIT)}, pp.\  2704--2708, 2019.
\newblock \doi{10.1109/ISIT.2019.8849821}.

\bibitem[Barnes et~al.(2020)Barnes, Han, and \"{O}zg\"{u}r]{BarnesHO20}
Barnes, L.~P., Han, Y., and \"{O}zg\"{u}r, A.
\newblock Lower bounds for learning distributions under communication
  constraints via fisher information.
\newblock \emph{Journal of Machine Learning Research}, 21:\penalty0 Paper No.
  236, 30, 2020.
\newblock ISSN 1532-4435.
\newblock URL
  \url{https://jmlr.csail.mit.edu/papers/volume21/19-737/19-737.pdf}.

\bibitem[Barron et~al.(1999)Barron, Birg{\'e}, and Massart]{Barron1999RiskBF}
Barron, A.~R., Birg{\'e}, L., and Massart, P.
\newblock Risk bounds for model selection via penalization.
\newblock \emph{Probability Theory and Related Fields}, 113:\penalty0 301--413,
  1999.
\newblock \doi{https://doi.org/10.1007/s004400050210}.

\bibitem[Beraha et~al.(2023)Beraha, Favaro, and Rao]{beraha2023mcmc}
Beraha, M., Favaro, S., and Rao, V.
\newblock Mcmc for bayesian nonparametric mixture modeling under differential
  privacy.
\newblock \emph{arXiv preprint arXiv:2310.09818}, 2023.

\bibitem[Berrett et~al.(2021)Berrett, Gy{\"o}rfi, and
  Walk]{berrett2021strongly}
Berrett, T.~B., Gy{\"o}rfi, L., and Walk, H.
\newblock {Strongly universally consistent nonparametric regression and
  classification with privatised data}.
\newblock \emph{Electronic Journal of Statistics}, 15\penalty0 (1):\penalty0
  2430 -- 2453, 2021.
\newblock \doi{10.1214/21-EJS1845}.
\newblock URL \url{https://doi.org/10.1214/21-EJS1845}.

\bibitem[Bertin et~al.(2016)Bertin, Lacour, and Rivoirard]{bertin2016adaptive}
Bertin, K., Lacour, C., and Rivoirard, V.
\newblock {Adaptive pointwise estimation of conditional density function}.
\newblock \emph{Annales de l'Institut Henri Poincaré, Probabilités et
  Statistiques}, 52\penalty0 (2):\penalty0 939 -- 980, 2016.
\newblock \doi{10.1214/14-AIHP665}.
\newblock URL \url{https://doi.org/10.1214/14-AIHP665}.

\bibitem[Bhaskar et~al.(2010)Bhaskar, Laxman, Smith, and
  Thakurta]{bhaskar2010privatefrequentpattern}
Bhaskar, R., Laxman, S., Smith, A.~D., and Thakurta, A.
\newblock Discovering frequent patterns in sensitive data.
\newblock In Rao, B., Krishnapuram, B., Tomkins, A., and Yang, Q. (eds.),
  \emph{Proceedings of the 16th {ACM} {SIGKDD} International Conference on
  Knowledge Discovery and Data Mining, Washington, DC, USA, July 25-28, 2010},
  pp.\  503--512. {ACM}, 2010.
\newblock \doi{10.1145/1835804.1835869}.
\newblock URL \url{https://doi.org/10.1145/1835804.1835869}.

\bibitem[Birg{\'e} \& Massart(1993)Birg{\'e} and Massart]{Birg1993RatesOC}
Birg{\'e}, L. and Massart, P.
\newblock Rates of convergence for minimum contrast estimators.
\newblock \emph{Probability Theory and Related Fields}, 97:\penalty0 113--150,
  1993.
\newblock \doi{https://doi.org/10.1007/BF01199316}.

\bibitem[Biswas et~al.(2020)Biswas, Dong, Kamath, and
  Ullman]{biswas2020coinpress}
Biswas, S., Dong, Y., Kamath, G., and Ullman, J.~R.
\newblock Coinpress: Practical private mean and covariance estimation.
\newblock In Larochelle, H., Ranzato, M., Hadsell, R., Balcan, M., and Lin, H.
  (eds.), \emph{Advances in Neural Information Processing Systems 33: Annual
  Conference on Neural Information Processing Systems 2020, NeurIPS 2020,
  December 6-12, 2020, virtual}, 2020.
\newblock URL
  \url{https://proceedings.neurips.cc/paper/2020/hash/a684eceee76fc522773286a895bc8436-Abstract.html}.

\bibitem[Bittau et~al.(2017)Bittau, Úlfar Erlingsson, Maniatis, Mironov,
  Raghunathan, Lie, Rudominer, Kode, Tinnes, and Seefeld]{46411}
Bittau, A., Úlfar Erlingsson, Maniatis, P., Mironov, I., Raghunathan, A., Lie,
  D., Rudominer, M., Kode, U., Tinnes, J., and Seefeld, B.
\newblock Prochlo: Strong privacy for analytics in the crowd.
\newblock In \emph{Proceedings of the Symposium on Operating Systems Principles
  (SOSP)}, pp.\  441--459, 2017.
\newblock URL \url{https://arxiv.org/abs/1710.00901}.

\bibitem[Blocki et~al.(2016)Blocki, Datta, and
  Bonneau]{blocki2016privatepasswordfrequency}
Blocki, J., Datta, A., and Bonneau, J.
\newblock Differentially private password frequency lists.
\newblock In \emph{23rd Annual Network and Distributed System Security
  Symposium, {NDSS} 2016, San Diego, California, USA, February 21-24, 2016}.
  The Internet Society, 2016.
\newblock URL
  \url{http://wp.internetsociety.org/ndss/wp-content/uploads/sites/25/2017/09/differentially-private-password-frequency-lists.pdf}.

\bibitem[Brown et~al.(2021)Brown, Gaboardi, Smith, Ullman, and
  Zakynthinou]{brown2021covariance}
Brown, G., Gaboardi, M., Smith, A.~D., Ullman, J.~R., and Zakynthinou, L.
\newblock Covariance-aware private mean estimation without private covariance
  estimation.
\newblock In Ranzato, M., Beygelzimer, A., Dauphin, Y.~N., Liang, P., and
  Vaughan, J.~W. (eds.), \emph{Advances in Neural Information Processing
  Systems 34: Annual Conference on Neural Information Processing Systems 2021,
  NeurIPS 2021, December 6-14, 2021, virtual}, pp.\  7950--7964, 2021.
\newblock URL
  \url{https://proceedings.neurips.cc/paper/2021/hash/42778ef0b5805a96f9511e20b5611fce-Abstract.html}.

\bibitem[Bun \& Steinke(2016)Bun and Steinke]{bun2016concentrated}
Bun, M. and Steinke, T.
\newblock Concentrated differential privacy: Simplifications, extensions, and
  lower bounds.
\newblock In Hirt, M. and Smith, A.~D. (eds.), \emph{Theory of Cryptography -
  14th International Conference, {TCC} 2016-B, Beijing, China, October 31 -
  November 3, 2016, Proceedings, Part {I}}, volume 9985 of \emph{Lecture Notes
  in Computer Science}, pp.\  635--658, 2016.
\newblock \doi{10.1007/978-3-662-53641-4\_24}.
\newblock URL \url{https://doi.org/10.1007/978-3-662-53641-4\_24}.

\bibitem[Bun et~al.(2019)Bun, Kamath, Steinke, and
  Wu]{bun2019privatehypothesis}
Bun, M., Kamath, G., Steinke, T., and Wu, Z.~S.
\newblock Private hypothesis selection.
\newblock In Wallach, H.~M., Larochelle, H., Beygelzimer, A.,
  d'Alch{\'{e}}{-}Buc, F., Fox, E.~B., and Garnett, R. (eds.), \emph{Advances
  in Neural Information Processing Systems 32: Annual Conference on Neural
  Information Processing Systems 2019, NeurIPS 2019, December 8-14, 2019,
  Vancouver, BC, Canada}, pp.\  156--167, 2019.
\newblock URL
  \url{https://proceedings.neurips.cc/paper/2019/hash/9778d5d219c5080b9a6a17bef029331c-Abstract.html}.

\bibitem[Bun et~al.(2021)Bun, Kamath, Steinke, and
  Wu]{bun2021privatehypothesis}
Bun, M., Kamath, G., Steinke, T., and Wu, Z.~S.
\newblock Private hypothesis selection.
\newblock \emph{{IEEE} Trans. Inf. Theory}, 67\penalty0 (3):\penalty0
  1981--2000, 2021.
\newblock \doi{10.1109/TIT.2021.3049802}.
\newblock URL \url{https://doi.org/10.1109/TIT.2021.3049802}.

\bibitem[Butucea et~al.(2019)Butucea, Dubois, Kroll, and
  Saumard]{butucea2020local}
Butucea, C., Dubois, A., Kroll, M., and Saumard, A.
\newblock Local differential privacy: Elbow effect in optimal density
  estimation and adaptation over besov ellipsoids.
\newblock \emph{CoRR}, abs/1903.01927, 2019.
\newblock URL \url{http://arxiv.org/abs/1903.01927}.

\bibitem[Cai et~al.(2019)Cai, Wang, and Zhang]{cai2021cost}
Cai, T.~T., Wang, Y., and Zhang, L.
\newblock The cost of privacy: Optimal rates of convergence for parameter
  estimation with differential privacy.
\newblock \emph{CoRR}, abs/1902.04495, 2019.
\newblock URL \url{http://arxiv.org/abs/1902.04495}.

\bibitem[Carlini et~al.(2022)Carlini, Chien, Nasr, Song, Terzis, and
  Tram{\`{e}}r]{carlini2022miafromfirstprinciples}
Carlini, N., Chien, S., Nasr, M., Song, S., Terzis, A., and Tram{\`{e}}r, F.
\newblock Membership inference attacks from first principles.
\newblock In \emph{43rd {IEEE} Symposium on Security and Privacy, {SP} 2022,
  San Francisco, CA, USA, May 22-26, 2022}, pp.\  1897--1914. {IEEE}, 2022.
\newblock \doi{10.1109/SP46214.2022.9833649}.
\newblock URL \url{https://doi.org/10.1109/SP46214.2022.9833649}.

\bibitem[Chagny(2013)]{chagny2013penalizationvslepski}
Chagny, G.
\newblock Penalization \protect\emph{versus {}Goldenshluger-Lepski} strategies
  in warped bases regression.
\newblock \emph{ESAIM: Probability and Statistics}, 17:\penalty0 328--358,
  2013.
\newblock \doi{10.1051/ps/2011165}.
\newblock URL \url{http://www.numdam.org/articles/10.1051/ps/2011165/}.

\bibitem[Chagny(2016)]{chagny:hal-02132884}
Chagny, G.
\newblock {AN INTRODUCTION TO NONPARAMETRIC ADAPTIVE ESTIMATION}.
\newblock \emph{{The Graduate Journal of Mathematics}}, 2016\penalty0
  (2):\penalty0 105--120, December 2016.
\newblock URL \url{https://hal.science/hal-02132884}.

\bibitem[Comte(2017)]{Non_Paramatric_Comte_2017}
Comte, F.
\newblock \emph{Nonparametric Estimation}.
\newblock Spartacus-Idh, 2017.
\newblock ISBN 978-2-36693-30-6.
\newblock URL \url{https://spartacus-idh.com/liseuse/030/}.

\bibitem[Comte \& Johannes(2012)Comte and
  Johannes]{comte2012adaptiveregression}
Comte, F. and Johannes, J.
\newblock {Adaptive functional linear regression}.
\newblock \emph{The Annals of Statistics}, 40\penalty0 (6):\penalty0 2765 --
  2797, 2012.
\newblock \doi{10.1214/12-AOS1050}.
\newblock URL \url{https://doi.org/10.1214/12-AOS1050}.

\bibitem[Conover(1999)]{conover1999practicalnonparametric}
Conover, W.
\newblock \emph{Practical nonparametric statistics}.
\newblock Wiley series in probability and statistics. Wiley, New York, NY
  [u.a.], 3. ed edition, 1999.
\newblock ISBN 0471160687.
\newblock URL
  \url{http://gso.gbv.de/DB=2.1/CMD?ACT=SRCHA&SRT=YOP&IKT=1016&TRM=ppn+24551600X&sourceid=fbw_bibsonomy}.

\bibitem[Diakonikolas et~al.(2015)Diakonikolas, Hardt, and
  Schmidt]{diakonikolas2015differentially}
Diakonikolas, I., Hardt, M., and Schmidt, L.
\newblock Differentially private learning of structured discrete distributions.
\newblock In Cortes, C., Lawrence, N.~D., Lee, D.~D., Sugiyama, M., and
  Garnett, R. (eds.), \emph{Advances in Neural Information Processing Systems
  28: Annual Conference on Neural Information Processing Systems 2015, December
  7-12, 2015, Montreal, Quebec, Canada}, pp.\  2566--2574, 2015.
\newblock URL
  \url{https://proceedings.neurips.cc/paper/2015/hash/2b3bf3eee2475e03885a110e9acaab61-Abstract.html}.

\bibitem[Ding et~al.(2017)Ding, Kulkarni, and Yekhanin]{ding2017collecting}
Ding, B., Kulkarni, J., and Yekhanin, S.
\newblock Collecting telemetry data privately.
\newblock In Guyon, I., von Luxburg, U., Bengio, S., Wallach, H.~M., Fergus,
  R., Vishwanathan, S. V.~N., and Garnett, R. (eds.), \emph{Advances in Neural
  Information Processing Systems 30: Annual Conference on Neural Information
  Processing Systems 2017, December 4-9, 2017, Long Beach, CA, {USA}}, pp.\
  3571--3580, 2017.
\newblock URL
  \url{https://proceedings.neurips.cc/paper/2017/hash/253614bbac999b38b5b60cae531c4969-Abstract.html}.

\bibitem[Ding et~al.(2021)Ding, Kifer, E., Steinke, Wang, Xiao, and
  Zhang]{ding2021permuteandflip}
Ding, Z., Kifer, D., E., S. M. S.~N., Steinke, T., Wang, Y., Xiao, Y., and
  Zhang, D.
\newblock The permute-and-flip mechanism is identical to report-noisy-max with
  exponential noise.
\newblock \emph{CoRR}, abs/2105.07260, 2021.
\newblock URL \url{https://arxiv.org/abs/2105.07260}.

\bibitem[Dinur \& Nissim(2003)Dinur and Nissim]{dinur2003revealing}
Dinur, I. and Nissim, K.
\newblock Revealing information while preserving privacy.
\newblock In Neven, F., Beeri, C., and Milo, T. (eds.), \emph{Proceedings of
  the Twenty-Second {ACM} {SIGACT-SIGMOD-SIGART} Symposium on Principles of
  Database Systems, June 9-12, 2003, San Diego, CA, {USA}}, pp.\  202--210.
  {ACM}, 2003.
\newblock \doi{10.1145/773153.773173}.
\newblock URL \url{https://doi.org/10.1145/773153.773173}.

\bibitem[Duchi et~al.(2013)Duchi, Jordan, and Wainwright]{duchi2013local}
Duchi, J.~C., Jordan, M.~I., and Wainwright, M.~J.
\newblock Local privacy and statistical minimax rates.
\newblock In \emph{51st Annual Allerton Conference on Communication, Control,
  and Computing, Allerton 2013, Allerton Park {\&} Retreat Center, Monticello,
  IL, USA, October 2-4, 2013}, pp.\  1592. {IEEE}, 2013.
\newblock \doi{10.1109/Allerton.2013.6736718}.
\newblock URL \url{https://doi.org/10.1109/Allerton.2013.6736718}.

\bibitem[Duchi et~al.(2014)Duchi, Jordan, and
  Wainwright]{duchi2013localpreprint}
Duchi, J.~C., Jordan, M.~I., and Wainwright, M.~J.
\newblock Local privacy, data processing inequalities, and statistical minimax
  rates, 2014.
\newblock URL \url{https://arxiv.org/abs/1302.3203}.

\bibitem[Duchi et~al.(2016)Duchi, Wainwright, and Jordan]{duchi2018minimax}
Duchi, J.~C., Wainwright, M.~J., and Jordan, M.~I.
\newblock Minimax optimal procedures for locally private estimation.
\newblock \emph{CoRR}, abs/1604.02390, 2016.
\newblock URL \url{http://arxiv.org/abs/1604.02390}.

\bibitem[Dwork \& Roth(2014)Dwork and Roth]{dwork2014algorithmic}
Dwork, C. and Roth, A.
\newblock The algorithmic foundations of differential privacy.
\newblock \emph{Found. Trends Theor. Comput. Sci.}, 9\penalty0 (3-4):\penalty0
  211--407, 2014.
\newblock \doi{10.1561/0400000042}.
\newblock URL \url{https://doi.org/10.1561/0400000042}.

\bibitem[Dwork \& Rothblum(2016)Dwork and Rothblum]{dwork2016concentrated}
Dwork, C. and Rothblum, G.~N.
\newblock Concentrated differential privacy.
\newblock \emph{CoRR}, abs/1603.01887, 2016.
\newblock URL \url{http://arxiv.org/abs/1603.01887}.

\bibitem[Dwork et~al.(2006)Dwork, McSherry, Nissim, and
  Smith]{dwork2006calibrating}
Dwork, C., McSherry, F., Nissim, K., and Smith, A.~D.
\newblock Calibrating noise to sensitivity in private data analysis.
\newblock In Halevi, S. and Rabin, T. (eds.), \emph{Theory of Cryptography,
  Third Theory of Cryptography Conference, {TCC} 2006, New York, NY, USA, March
  4-7, 2006, Proceedings}, volume 3876 of \emph{Lecture Notes in Computer
  Science}, pp.\  265--284. Springer, 2006.
\newblock \doi{10.1007/11681878\_14}.
\newblock URL \url{https://doi.org/10.1007/11681878\_14}.

\bibitem[Erlingsson et~al.(2014)Erlingsson, Pihur, and
  Korolova]{erlingsson2014rappor}
Erlingsson, {\'{U}}., Pihur, V., and Korolova, A.
\newblock {RAPPOR:} randomized aggregatable privacy-preserving ordinal
  response.
\newblock In Ahn, G., Yung, M., and Li, N. (eds.), \emph{Proceedings of the
  2014 {ACM} {SIGSAC} Conference on Computer and Communications Security,
  Scottsdale, AZ, USA, November 3-7, 2014}, pp.\  1054--1067. {ACM}, 2014.
\newblock \doi{10.1145/2660267.2660348}.
\newblock URL \url{https://doi.org/10.1145/2660267.2660348}.

\bibitem[Evfimievski et~al.(2003)Evfimievski, Gehrke, and
  Srikant]{10.1145/773153.773174}
Evfimievski, A., Gehrke, J., and Srikant, R.
\newblock Limiting privacy breaches in privacy preserving data mining.
\newblock In \emph{Proceedings of the Twenty-Second ACM SIGMOD-SIGACT-SIGART
  Symposium on Principles of Database Systems}, PODS '03, pp.\  211–222, New
  York, NY, USA, 2003. Association for Computing Machinery.
\newblock ISBN 1581136706.
\newblock \doi{10.1145/773153.773174}.
\newblock URL \url{https://doi.org/10.1145/773153.773174}.

\bibitem[Fredrikson et~al.(2015)Fredrikson, Jha, and
  Ristenpart]{fredrikson2015model}
Fredrikson, M., Jha, S., and Ristenpart, T.
\newblock Model inversion attacks that exploit confidence information and basic
  countermeasures.
\newblock In Ray, I., Li, N., and Kruegel, C. (eds.), \emph{Proceedings of the
  22nd {ACM} {SIGSAC} Conference on Computer and Communications Security,
  Denver, CO, USA, October 12-16, 2015}, pp.\  1322--1333. {ACM}, 2015.
\newblock \doi{10.1145/2810103.2813677}.
\newblock URL \url{https://doi.org/10.1145/2810103.2813677}.

\bibitem[Goldenhsluger \& Lepski(2007)Goldenhsluger and
  Lepski]{goldenhsluger2007structural}
Goldenhsluger, A. and Lepski, O.
\newblock Structural adaptation via $l_p$-norm oracle inequalities, 2007.

\bibitem[Goldenshluger \& Lepski(2008)Goldenshluger and
  Lepski]{goldenshluger2008universalpointwise}
Goldenshluger, A. and Lepski, O.
\newblock {Universal pointwise selection rule in multivariate function
  estimation}.
\newblock \emph{Bernoulli}, 14\penalty0 (4):\penalty0 1150 -- 1190, 2008.
\newblock \doi{10.3150/08-BEJ144}.
\newblock URL \url{https://doi.org/10.3150/08-BEJ144}.

\bibitem[Goldenshluger \& Lepski(2011)Goldenshluger and
  Lepski]{goldenshluger2011bandwidthselection}
Goldenshluger, A. and Lepski, O.
\newblock {Bandwidth selection in kernel density estimation: Oracle
  inequalities and adaptive minimax optimality}.
\newblock \emph{The Annals of Statistics}, 39\penalty0 (3):\penalty0 1608 --
  1632, 2011.
\newblock \doi{10.1214/11-AOS883}.
\newblock URL \url{https://doi.org/10.1214/11-AOS883}.

\bibitem[Goldenshluger \& Lepski(2013)Goldenshluger and
  Lepski]{goldenshluger2013bandwidthselection}
Goldenshluger, A.~V. and Lepski, O.~V.
\newblock General selection rule from a family of linear estimators.
\newblock \emph{Theory of Probability \& Its Applications}, 57\penalty0
  (2):\penalty0 209--226, 2013.
\newblock \doi{10.1137/S0040585X97985923}.
\newblock URL \url{https://doi.org/10.1137/S0040585X97985923}.

\bibitem[Gonon et~al.(2023)Gonon, Zheng, Lalanne, Le, Lauga, and
  Pouliquen]{gonon2023sparsity}
Gonon, A., Zheng, L., Lalanne, C., Le, Q.-T., Lauga, G., and Pouliquen, C.
\newblock Sparsity in neural networks can improve their privacy, 2023.

\bibitem[Gy{\"o}rfi \& Kroll(2022)Gy{\"o}rfi and Kroll]{gyorfi2022rate}
Gy{\"o}rfi, L. and Kroll, M.
\newblock On rate optimal private regression under local differential privacy.
\newblock \emph{arXiv preprint arXiv:2206.00114}, 2022.

\bibitem[Gy{\"{o}}rfi et~al.(2002)Gy{\"{o}}rfi, Kohler, Krzyzak, and
  Walk]{gyorfi2002distribution}
Gy{\"{o}}rfi, L., Kohler, M., Krzyzak, A., and Walk, H.
\newblock \emph{A Distribution-Free Theory of Nonparametric Regression}.
\newblock Springer series in statistics. Springer, 2002.
\newblock ISBN 978-0-387-95441-7.
\newblock \doi{10.1007/b97848}.
\newblock URL \url{https://doi.org/10.1007/b97848}.

\bibitem[Györfi \& Kroll(2023)Györfi and Kroll]{gyorfi2023multivariate}
Györfi, L. and Kroll, M.
\newblock Multivariate density estimation from privatised data: universal
  consistency and minimax rates.
\newblock \emph{Journal of Nonparametric Statistics}, 0\penalty0 (0):\penalty0
  1--23, 2023.
\newblock \doi{10.1080/10485252.2022.2163634}.
\newblock URL \url{https://doi.org/10.1080/10485252.2022.2163634}.

\bibitem[Haney et~al.(2017)Haney, Machanavajjhala, Abowd, Graham, Kutzbach, and
  Vilhuber]{DBLP:conf/sigmod/HaneyMAGKV17}
Haney, S., Machanavajjhala, A., Abowd, J.~M., Graham, M., Kutzbach, M., and
  Vilhuber, L.
\newblock Utility cost of formal privacy for releasing national
  employer-employee statistics.
\newblock In Salihoglu, S., Zhou, W., Chirkova, R., Yang, J., and Suciu, D.
  (eds.), \emph{Proceedings of the 2017 {ACM} International Conference on
  Management of Data, {SIGMOD} Conference 2017, Chicago, IL, USA, May 14-19,
  2017}, pp.\  1339--1354. {ACM}, 2017.
\newblock \doi{10.1145/3035918.3035940}.
\newblock URL \url{https://doi.org/10.1145/3035918.3035940}.

\bibitem[Hardt et~al.(2012)Hardt, Ligett, and
  McSherry]{hardt2012privatedatarelease}
Hardt, M., Ligett, K., and McSherry, F.
\newblock A simple and practical algorithm for differentially private data
  release.
\newblock In Bartlett, P.~L., Pereira, F. C.~N., Burges, C. J.~C., Bottou, L.,
  and Weinberger, K.~Q. (eds.), \emph{Advances in Neural Information Processing
  Systems 25: 26th Annual Conference on Neural Information Processing Systems
  2012. Proceedings of a meeting held December 3-6, 2012, Lake Tahoe, Nevada,
  United States}, pp.\  2348--2356, 2012.
\newblock URL
  \url{https://proceedings.neurips.cc/paper/2012/hash/208e43f0e45c4c78cafadb83d2888cb6-Abstract.html}.

\bibitem[Homer et~al.(2008)Homer, Szelinger, Redman, Duggan, Tembe, Muehling,
  Pearson, Stephan, Nelson, and Craig]{homer2008resolving}
Homer, N., Szelinger, S., Redman, M., Duggan, D., Tembe, W., Muehling, J.,
  Pearson, J.~V., Stephan, D.~A., Nelson, S.~F., and Craig, D.~W.
\newblock Resolving individuals contributing trace amounts of dna to highly
  complex mixtures using high-density snp genotyping microarrays.
\newblock \emph{PLoS Genet}, 4\penalty0 (8):\penalty0 e1000167, 2008.

\bibitem[Kallenberg(1993)]{lindvall2002lectures}
Kallenberg, O.
\newblock Lectures on the coupling method (torgny lindvall).
\newblock \emph{SIAM Review}, 35\penalty0 (3):\penalty0 525--527, 1993.
\newblock \doi{10.1137/1035121}.
\newblock URL \url{https://doi.org/10.1137/1035121}.

\bibitem[Kamath et~al.(2019)Kamath, Li, Singhal, and
  Ullman]{kamath2019highdimensional}
Kamath, G., Li, J., Singhal, V., and Ullman, J.~R.
\newblock Privately learning high-dimensional distributions.
\newblock In Beygelzimer, A. and Hsu, D. (eds.), \emph{Conference on Learning
  Theory, {COLT} 2019, 25-28 June 2019, Phoenix, AZ, {USA}}, volume~99 of
  \emph{Proceedings of Machine Learning Research}, pp.\  1853--1902. {PMLR},
  2019.
\newblock URL \url{http://proceedings.mlr.press/v99/kamath19a.html}.

\bibitem[Kamath et~al.(2020)Kamath, Singhal, and Ullman]{kamath2020heavytailed}
Kamath, G., Singhal, V., and Ullman, J.~R.
\newblock Private mean estimation of heavy-tailed distributions.
\newblock In Abernethy, J.~D. and Agarwal, S. (eds.), \emph{Conference on
  Learning Theory, {COLT} 2020, 9-12 July 2020, Virtual Event [Graz, Austria]},
  volume 125 of \emph{Proceedings of Machine Learning Research}, pp.\
  2204--2235. {PMLR}, 2020.
\newblock URL \url{http://proceedings.mlr.press/v125/kamath20a.html}.

\bibitem[Kamath et~al.(2022{\natexlab{a}})Kamath, Liu, and
  Zhang]{kamath2022improved}
Kamath, G., Liu, X., and Zhang, H.
\newblock Improved rates for differentially private stochastic convex
  optimization with heavy-tailed data.
\newblock In Chaudhuri, K., Jegelka, S., Song, L., Szepesv{\'{a}}ri, C., Niu,
  G., and Sabato, S. (eds.), \emph{International Conference on Machine
  Learning, {ICML} 2022, 17-23 July 2022, Baltimore, Maryland, {USA}}, volume
  162 of \emph{Proceedings of Machine Learning Research}, pp.\  10633--10660.
  {PMLR}, 2022{\natexlab{a}}.
\newblock URL \url{https://proceedings.mlr.press/v162/kamath22a.html}.

\bibitem[Kamath et~al.(2022{\natexlab{b}})Kamath, Mouzakis, and
  Singhal]{kamath2023new}
Kamath, G., Mouzakis, A., and Singhal, V.
\newblock New lower bounds for private estimation and a generalized
  fingerprinting lemma.
\newblock In \emph{NeurIPS}, 2022{\natexlab{b}}.
\newblock URL
  \url{http://papers.nips.cc/paper\_files/paper/2022/hash/9a6b278218966499194491f55ccf8b75-Abstract-Conference.html}.

\bibitem[Kamath et~al.(2023)Kamath, Mouzakis, Regehr, Singhal, Steinke, and
  Ullman]{kamath2023biasvarianceprivacy}
Kamath, G., Mouzakis, A., Regehr, M., Singhal, V., Steinke, T., and Ullman,
  J.~R.
\newblock A bias-variance-privacy trilemma for statistical estimation.
\newblock \emph{CoRR}, abs/2301.13334, 2023.
\newblock \doi{10.48550/ARXIV.2301.13334}.
\newblock URL \url{https://doi.org/10.48550/arXiv.2301.13334}.

\bibitem[Karwa \& Vadhan(2018)Karwa and Vadhan]{karwa2017finite}
Karwa, V. and Vadhan, S.~P.
\newblock Finite sample differentially private confidence intervals.
\newblock In Karlin, A.~R. (ed.), \emph{9th Innovations in Theoretical Computer
  Science Conference, {ITCS} 2018, January 11-14, 2018, Cambridge, MA, {USA}},
  volume~94 of \emph{LIPIcs}, pp.\  44:1--44:9. Schloss Dagstuhl -
  Leibniz-Zentrum f{\"{u}}r Informatik, 2018.
\newblock \doi{10.4230/LIPIcs.ITCS.2018.44}.
\newblock URL \url{https://doi.org/10.4230/LIPIcs.ITCS.2018.44}.

\bibitem[Kasiviswanathan et~al.(2008)Kasiviswanathan, Lee, Nissim,
  Raskhodnikova, and Smith]{4690986}
Kasiviswanathan, S.~P., Lee, H.~K., Nissim, K., Raskhodnikova, S., and Smith,
  A.
\newblock What can we learn privately?
\newblock In \emph{2008 49th Annual IEEE Symposium on Foundations of Computer
  Science}, pp.\  531--540, 2008.
\newblock \doi{10.1109/FOCS.2008.27}.

\bibitem[Klein \& Rio(2005)Klein and Rio]{Klein10.1214/009117905000000044}
Klein, T. and Rio, E.
\newblock {Concentration around the mean for maxima of empirical processes}.
\newblock \emph{The Annals of Probability}, 33\penalty0 (3):\penalty0 1060 --
  1077, 2005.
\newblock \doi{10.1214/009117905000000044}.
\newblock URL \url{https://doi.org/10.1214/009117905000000044}.

\bibitem[Kroll(2021)]{kroll2021density}
Kroll, M.
\newblock {On density estimation at a fixed point under local differential
  privacy}.
\newblock \emph{Electronic Journal of Statistics}, 15\penalty0 (1):\penalty0
  1783 -- 1813, 2021.
\newblock \doi{10.1214/21-EJS1830}.
\newblock URL \url{https://doi.org/10.1214/21-EJS1830}.

\bibitem[Lalanne(2023)]{lalanne:thesis}
Lalanne, C.
\newblock \emph{{On the tradeoffs of statistical learning with privacy}}.
\newblock Theses, {Ecole normale sup{\'e}rieure de lyon - ENS LYON}, October
  2023.
\newblock URL \url{https://theses.hal.science/tel-04379624}.

\bibitem[Lalanne et~al.(2023{\natexlab{a}})Lalanne, Garivier, and
  Gribonval]{lalanne2022statistical}
Lalanne, C., Garivier, A., and Gribonval, R.
\newblock {On the Statistical Complexity of Estimation and Testing under
  Privacy Constraints}.
\newblock \emph{{Transactions on Machine Learning Research Journal}}, April
  2023{\natexlab{a}}.
\newblock URL \url{https://hal.science/hal-03794374}.

\bibitem[Lalanne et~al.(2023{\natexlab{b}})Lalanne, Garivier, and
  Gribonval]{lalanne2023about}
Lalanne, C., Garivier, A., and Gribonval, R.
\newblock About the cost of central privacy in density estimation.
\newblock \emph{Transactions on Machine Learning Research}, 2023{\natexlab{b}}.
\newblock ISSN 2835-8856.
\newblock URL \url{https://openreview.net/forum?id=uq29MIWvIV}.

\bibitem[Lalanne et~al.(2023{\natexlab{c}})Lalanne, Garivier, and
  Gribonval]{lalanne2023private}
Lalanne, C., Garivier, A., and Gribonval, R.
\newblock {Private Statistical Estimation of Many Quantiles}.
\newblock In \emph{{ICML 2023 - 40th International Conference on Machine
  Learning}}, Honolulu, United States, July 2023{\natexlab{c}}.
\newblock URL \url{https://hal.science/hal-03986170}.

\bibitem[Lalanne et~al.(2023{\natexlab{d}})Lalanne, Gastaud, Grislain,
  Garivier, and Gribonval]{lalanne2022private}
Lalanne, C., Gastaud, C., Grislain, N., Garivier, A., and Gribonval, R.
\newblock {Private Quantiles Estimation in the Presence of Atoms}.
\newblock \emph{{Information and Inference}}, August 2023{\natexlab{d}}.
\newblock \doi{10.1093/imaiai/iaad030}.
\newblock URL \url{https://hal.science/hal-03572701}.

\bibitem[Lam-Weil et~al.(2022)Lam-Weil, Laurent, and Loubes]{lam2022minimax}
Lam-Weil, J., Laurent, B., and Loubes, J.-M.
\newblock Minimax optimal goodness-of-fit testing for densities and
  multinomials under a local differential privacy constraint.
\newblock \emph{Bernoulli}, 28\penalty0 (1):\penalty0 579--600, 2022.

\bibitem[Laurent \& Massart(2000)Laurent and Massart]{laurent2000adaptive}
Laurent, B. and Massart, P.
\newblock Adaptive estimation of a quadratic functional by model selection.
\newblock \emph{The Annals of Statistics}, 28\penalty0 (5):\penalty0
  1302--1338, 2000.
\newblock ISSN 00905364.
\newblock URL \url{http://www.jstor.org/stable/2674095}.

\bibitem[Ledoux(1997)]{Ledoux_PS_1997__1__63_0}
Ledoux, M.
\newblock On {Talagrand's} deviation inequalities for product measures.
\newblock \emph{ESAIM: Probability and Statistics}, 1:\penalty0 63--87, 1997.
\newblock URL
  \url{https://www.esaim-ps.org/articles/ps/abs/1997/01/ps-Vol1.4/ps-Vol1.4.html}.

\bibitem[Lepskii(1991)]{lepskii1991adaptivegaussiannoise}
Lepskii, O.~V.
\newblock On a problem of adaptive estimation in gaussian white noise.
\newblock \emph{Theory of Probability \& Its Applications}, 35\penalty0
  (3):\penalty0 454--466, 1991.
\newblock \doi{10.1137/1135065}.
\newblock URL \url{https://doi.org/10.1137/1135065}.

\bibitem[Lepskii(1992)]{lepskii1992adaptive1}
Lepskii, O.~V.
\newblock Asymptotically minimax adaptive estimation. i: Upper bounds.
  optimally adaptive estimates.
\newblock \emph{Theory of Probability \& Its Applications}, 36\penalty0
  (4):\penalty0 682--697, 1992.
\newblock \doi{10.1137/1136085}.
\newblock URL \url{https://doi.org/10.1137/1136085}.

\bibitem[Lepskii(1993)]{lepskii1992adaptive2}
Lepskii, O.~V.
\newblock Asymptotically minimax adaptive estimation. ii. schemes without
  optimal adaptation: Adaptive estimators.
\newblock \emph{Theory of Probability \& Its Applications}, 37\penalty0
  (3):\penalty0 433--448, 1993.
\newblock \doi{10.1137/1137095}.
\newblock URL \url{https://doi.org/10.1137/1137095}.

\bibitem[Liu \& Talwar(2019)Liu and Talwar]{liu2019privateselection}
Liu, J. and Talwar, K.
\newblock Private selection from private candidates.
\newblock In Charikar, M. and Cohen, E. (eds.), \emph{Proceedings of the 51st
  Annual {ACM} {SIGACT} Symposium on Theory of Computing, {STOC} 2019, Phoenix,
  AZ, USA, June 23-26, 2019}, pp.\  298--309. {ACM}, 2019.
\newblock \doi{10.1145/3313276.3316377}.
\newblock URL \url{https://doi.org/10.1145/3313276.3316377}.

\bibitem[Loukides et~al.(2010)Loukides, Denny, and
  Malin]{loukides2010disclosure}
Loukides, G., Denny, J.~C., and Malin, B.~A.
\newblock The disclosure of diagnosis codes can breach research participants'
  privacy.
\newblock \emph{J. Am. Medical Informatics Assoc.}, 17\penalty0 (3):\penalty0
  322--327, 2010.
\newblock \doi{10.1136/jamia.2009.002725}.
\newblock URL \url{https://doi.org/10.1136/jamia.2009.002725}.

\bibitem[Machanavajjhala et~al.(2008)Machanavajjhala, Kifer, Abowd, Gehrke, and
  Vilhuber]{DBLP:conf/icde/MachanavajjhalaKAGV08}
Machanavajjhala, A., Kifer, D., Abowd, J.~M., Gehrke, J., and Vilhuber, L.
\newblock Privacy: Theory meets practice on the map.
\newblock In Alonso, G., Blakeley, J.~A., and Chen, A. L.~P. (eds.),
  \emph{Proceedings of the 24th International Conference on Data Engineering,
  {ICDE} 2008, April 7-12, 2008, Canc{\'{u}}n, Mexico}, pp.\  277--286. {IEEE}
  Computer Society, 2008.
\newblock \doi{10.1109/ICDE.2008.4497436}.
\newblock URL \url{https://doi.org/10.1109/ICDE.2008.4497436}.

\bibitem[Mallows(1973)]{mallows1973}
Mallows, C.~L.
\newblock Some comments on cp.
\newblock \emph{Technometrics}, 15\penalty0 (4):\penalty0 661--675, 1973.
\newblock ISSN 00401706.
\newblock URL \url{http://www.jstor.org/stable/1267380}.

\bibitem[Massart(2007)]{massart2007}
Massart, P.
\newblock Concentration inequalities and model selection, ecole d'eté de
  probabilités de saint-flour xxxiii - 2003.
\newblock \emph{Lecture Notes in Mathematics -Springer-verlag-}, 1896, 01 2007.
\newblock \doi{10.1007/978-3-540-48503-2}.

\bibitem[McKenna \& Sheldon(2020)McKenna and
  Sheldon]{mckenna2020permuteandflip}
McKenna, R. and Sheldon, D.
\newblock Permute-and-flip: {A} new mechanism for differentially private
  selection.
\newblock In Larochelle, H., Ranzato, M., Hadsell, R., Balcan, M., and Lin, H.
  (eds.), \emph{Advances in Neural Information Processing Systems 33: Annual
  Conference on Neural Information Processing Systems 2020, NeurIPS 2020,
  December 6-12, 2020, virtual}, 2020.
\newblock URL
  \url{https://proceedings.neurips.cc/paper/2020/hash/01e00f2f4bfcbb7505cb641066f2859b-Abstract.html}.

\bibitem[McSherry \& Talwar(2007)McSherry and Talwar]{mcsherry2007mechanism}
McSherry, F. and Talwar, K.
\newblock Mechanism design via differential privacy.
\newblock In \emph{48th Annual {IEEE} Symposium on Foundations of Computer
  Science {(FOCS} 2007), October 20-23, 2007, Providence, RI, USA,
  Proceedings}, pp.\  94--103. {IEEE} Computer Society, 2007.
\newblock \doi{10.1109/FOCS.2007.41}.
\newblock URL \url{https://doi.org/10.1109/FOCS.2007.41}.

\bibitem[Narayanan \& Shmatikov(2006)Narayanan and
  Shmatikov]{narayanan2006break}
Narayanan, A. and Shmatikov, V.
\newblock How to break anonymity of the netflix prize dataset.
\newblock \emph{CoRR}, abs/cs/0610105, 2006.
\newblock URL \url{http://arxiv.org/abs/cs/0610105}.

\bibitem[Narayanan \& Shmatikov(2008)Narayanan and
  Shmatikov]{narayanan2008robust}
Narayanan, A. and Shmatikov, V.
\newblock Robust de-anonymization of large sparse datasets.
\newblock In \emph{2008 {IEEE} Symposium on Security and Privacy (S{\&}P 2008),
  18-21 May 2008, Oakland, California, {USA}}, pp.\  111--125. {IEEE} Computer
  Society, 2008.
\newblock \doi{10.1109/SP.2008.33}.
\newblock URL \url{https://doi.org/10.1109/SP.2008.33}.

\bibitem[Rigollet \& H{\"u}tter(2015)Rigollet and H{\"u}tter]{rigollet2015high}
Rigollet, P. and H{\"u}tter, J.-C.
\newblock High dimensional statistics.
\newblock \emph{MIT lecture notes for course 18S997}, 2015.
\newblock URL \url{https://math.mit.edu/~rigollet/PDFs/RigNotes17.pdf}.

\bibitem[Schluttenhofer \& Johannes(2022)Schluttenhofer and
  Johannes]{schluttenhofer2022adaptive}
Schluttenhofer, S. and Johannes, J.
\newblock Adaptive pointwise density estimation under local differential
  privacy, 2022.

\bibitem[Singhal(2023)]{singhal2023polynomial}
Singhal, V.
\newblock A polynomial time, pure differentially private estimator for binary
  product distributions.
\newblock \emph{CoRR}, abs/2304.06787, 2023.
\newblock \doi{10.48550/ARXIV.2304.06787}.
\newblock URL \url{https://doi.org/10.48550/arXiv.2304.06787}.

\bibitem[Smith(2011)]{smith2011privacy}
Smith, A.~D.
\newblock Privacy-preserving statistical estimation with optimal convergence
  rates.
\newblock In Fortnow, L. and Vadhan, S.~P. (eds.), \emph{Proceedings of the
  43rd {ACM} Symposium on Theory of Computing, {STOC} 2011, San Jose, CA, USA,
  6-8 June 2011}, pp.\  813--822. {ACM}, 2011.
\newblock \doi{10.1145/1993636.1993743}.
\newblock URL \url{https://doi.org/10.1145/1993636.1993743}.

\bibitem[Sweeney(2000)]{sweeney2000simple}
Sweeney, L.
\newblock Simple demographics often identify people uniquely.
\newblock \emph{Health (San Francisco)}, 671\penalty0 (2000):\penalty0 1--34,
  2000.

\bibitem[Sweeney(2002)]{sweeney2002k}
Sweeney, L.
\newblock k-anonymity: {A} model for protecting privacy.
\newblock \emph{Int. J. Uncertain. Fuzziness Knowl. Based Syst.}, 10\penalty0
  (5):\penalty0 557--570, 2002.
\newblock \doi{10.1142/S0218488502001648}.
\newblock URL \url{https://doi.org/10.1142/S0218488502001648}.

\bibitem[{Talagrand}(1996)]{Talagrand1996InMat.126..505T}
{Talagrand}, M.
\newblock {New concentration inequalities in product spaces}.
\newblock \emph{Inventiones Mathematicae}, 126\penalty0 (3):\penalty0 505--563,
  November 1996.
\newblock \doi{10.1007/s002220050108}.
\newblock URL \url{https://link.springer.com/article/10.1007/s002220050108}.

\bibitem[Thakurta et~al.(2017)Thakurta, Vyrros, Vaishampayan, Kapoor,
  Freudiger, Sridhar, and Davidson]{thakurta2017learning}
Thakurta, A.~G., Vyrros, A.~H., Vaishampayan, U.~S., Kapoor, G., Freudiger, J.,
  Sridhar, V.~R., and Davidson, D.
\newblock Learning new words.
\newblock \emph{Granted US Patents}, 9594741, 2017.

\bibitem[Tsybakov(2009)]{tsybakov2003introduction}
Tsybakov, A.~B.
\newblock \emph{Introduction to Nonparametric Estimation}.
\newblock Springer series in statistics. Springer, 2009.
\newblock ISBN 978-0-387-79051-0.
\newblock \doi{10.1007/b13794}.
\newblock URL \url{https://doi.org/10.1007/b13794}.

\bibitem[van Erven \& Harremo{\"{e}}s(2014)van Erven and
  Harremo{\"{e}}s]{van2014renyi}
van Erven, T. and Harremo{\"{e}}s, P.
\newblock R{\'{e}}nyi divergence and kullback-leibler divergence.
\newblock \emph{{IEEE} Trans. Inf. Theory}, 60\penalty0 (7):\penalty0
  3797--3820, 2014.
\newblock \doi{10.1109/TIT.2014.2320500}.
\newblock URL \url{https://doi.org/10.1109/TIT.2014.2320500}.

\bibitem[Wagner \& Eckhoff(2018)Wagner and Eckhoff]{wagner2018technical}
Wagner, I. and Eckhoff, D.
\newblock Technical privacy metrics: {A} systematic survey.
\newblock \emph{{ACM} Comput. Surv.}, 51\penalty0 (3):\penalty0 57:1--57:38,
  2018.
\newblock \doi{10.1145/3168389}.
\newblock URL \url{https://doi.org/10.1145/3168389}.

\bibitem[Wasserman(2006)]{wasserman2006allnonparametric}
Wasserman, L.
\newblock \emph{All of Nonparametric Statistics (Springer Texts in
  Statistics)}.
\newblock Springer-Verlag, Berlin, Heidelberg, 2006.
\newblock ISBN 0387251456.

\bibitem[Wasserman \& Zhou(2010)Wasserman and Zhou]{wasserman2010statistical}
Wasserman, L.~A. and Zhou, S.
\newblock A statistical framework for differential privacy.
\newblock \emph{Journal of the American Statistical Association}, 105\penalty0
  (489):\penalty0 375--389, 2010.
\newblock \doi{10.1198/jasa.2009.tm08651}.
\newblock URL \url{https://doi.org/10.1198/jasa.2009.tm08651}.

\end{thebibliography}
\bibliographystyle{icml2024}

\newpage
\appendix
\onecolumn

\section{Proofs of \Cref{sec:projectionestimators}}

\subsection{Proof of \Cref{lemmaBiasVarianceDecomposition}}
\label{proofOfLemmaBiasVarianceDecomposition}

Our starting point is the Parseval equality, that leads to:
\begin{equation}
        \begin{aligned}
            \E \p{\| f - \hat{f}_M\|^2} 
            &\stackrel{\text{Parseval}}{=}
            \E \p{
            \sum_{k \in \Z^d \setminus \{-M, \dots, M\}^d} |\theta_k|^2
            +
            \sum_{k \in \{-M, \dots, M\}^d} |\theta_k - \hat{\theta}_k|^2} \;,
        \end{aligned}
    \end{equation}
where the family $(\theta_k)_k$ refers to the Fourier coefficients of $f$ in the basis defined in \eqref{equationFourierBasisDefinition}, and where the noisy Fourier coefficient estimators $\p{\hat{\theta}_k}_{k \in \{-M, \dots, M \}^d}$ are defined in \eqref{eq:noise_hat_coeff_f}.
First, we may notice that :
\begin{equation}
    \sum_{k \in \Z^d \setminus \{-M, \dots, M\}^d} |\theta_k|^2 = \| f - f_M\|^2
\end{equation}
deterministically. This leads to the bias term in the error decomposition.

Then, for any $k \in \{-M, \dots, M \}^d$,
\begin{equation}
    \begin{aligned}
        \E \p{|\theta_k - \hat{\theta}_k|^2}
        &\leq \left|\E \p{\hat{\theta}_k} - \theta_k\right|^2 + \V \p{\hat{\theta}_k} \;.
    \end{aligned}
\end{equation}
Furthermore, 
\begin{equation}
    \begin{aligned}
        \E \p{\hat{\theta}_k}
        &\stackrel{\eqref{eq:noise_hat_coeff_f}}{=}
        \E \p{\tilde{\theta}_k + \sigma_K \p{\mathcal{N}(0, 1) + i_{\C} \mathcal{N}(0, 1)}} 
         \stackrel{}{=}
        \E \p{\tilde{\theta}_k } 
        \stackrel{\eqref{equationFourierEstimatorsDefinition}}{=}
        \E \p{\frac{1}{n} \sum_{i=1}^n \bar{\phi}_k(X_i)} \\
        &\stackrel{}{=}
        \frac{1}{n} \sum_{i=1}^n \E \p{\bar{\phi}_k(X_i)} 
        \stackrel{}{=}
        \frac{1}{n} \sum_{i=1}^n \theta_k = \theta_k \;.
    \end{aligned}
\end{equation}
Finally,
\begin{equation}
    \begin{aligned}
        \V \p{\hat{\theta}_k}
        &\stackrel{\eqref{eq:noise_hat_coeff_f}}{=}
        \V \p{\tilde{\theta}_k + \sigma_K \p{\mathcal{N}(0, 1) + i_{\C} \mathcal{N}(0, 1)}}
        \stackrel{\text{Indep.}}{=}
        \V \p{\tilde{\theta}_k } +  \V \p{ \sigma_K \p{\mathcal{N}(0, 1) + i_{\C} \mathcal{N}(0, 1)}} \\
        &\stackrel{\eqref{equationFourierEstimatorsDefinition}}{=}
        \V \p{\frac{1}{n} \sum_{i=1}^n \bar{\phi}_k(X_i)} +  \V \p{\sigma_M \p{\mathcal{N}(0, 1) + i_{\C} \mathcal{N}(0, 1)}} 
        \stackrel{\text{Indep.}}{=}
        \frac{1}{n^2} \sum_{i=1}^n \V \p{\bar{\phi}_k(X_i)} +  2 \sigma_M^2 \\
        &\stackrel{|\phi_k(\cdot)| \leq 1 \text{ \& \Cref{lemmaProvicius}}}{\leq}
        \frac{1}{n^2} \sum_{i=1}^n 1 +  2 \sigma_M^2 = 
        \frac{1}{n} + 2 \sigma_M^2
    \end{aligned}
\end{equation}

\subsection{Proof of \Cref{theoremPrivacyProjectionEstimator}}
\label{proofOfTheoremPrivacyProjectionEstimator}

The mechanism $(X_1, \dots, X_n) \mapsto \hat{f}_M$ may equivalently be seen as the mechanism that releases the vector in $\C^{(2M+1)^d}$ of the privatized Fourier coefficient estimates, or as the mechanism that releases the vector in $\R^{2(2M+1)^d}$ of the real and imaginary parts (respectively noted $R(\cdot)$ and $I(\cdot)$) of the privatized Fourier coefficient estimates.

We aim to apply Lemma \ref{factProvacyGaussianMechanism}: for this purpose, consider any multi-index $k$, and any $(X_1, \dots, X_n), (X_1', \dots, X_n') \in [0, 1]^d$,
\begin{equation}
\begin{aligned}
    \left|\tilde{\theta}_k(X_1, \dots, X_n) - \tilde{\theta}_k(X_1', \dots, X_n')\right| 
    &= 
    \left|
    \frac{1}{n} \sum_{j=1}^n \bar{\phi}_k(X_j) 
    -
    \frac{1}{n} \sum_{j=1}^n \bar{\phi}_k(X_j') 
    \right| \\
    &\leq 
    \frac{1}{n} \sum_{j=1}^n \left| \bar{\phi}_k(X_j) - \bar{\phi}_k(X_j') \right| \\
    &\stackrel{|{\phi}_k(\cdot)| \leq 1}{\leq} \frac{2 \ham{(X_1, \dots, X_n)}{(X_1', \dots, X_n')}}{n} \;.
    \;,
\end{aligned}
\end{equation}

Hence, for any $k$, $\tilde{\theta}_k$ is of $l_2$ sensitivity $\frac{2}{n}$. Hence, for any $k$, $R(\tilde{\theta}_k)$ and $I(\tilde{\theta}_k)$ are both of sensitivity at most $\frac{2}{n}$ (because $R(\cdot)$ and $I(\cdot)$ are orthogonal projections and are hence contraction linear mappings).

The $l_2$ sensitivity of computing the $2(2M+1)^d$ approximate real and imaginary parts of the Fourier coefficients is thus $\frac{2}{n}\sqrt{2(2M+1)^d}$.
Then, the application of \Cref{factProvacyGaussianMechanism} guarantees that the mechanism that releases $\hat{f}_M$, when computed with $\sigma_M = \frac{2 \sqrt{(2M+1)^d}}{n \sqrt{\rho}}$ satisfies $\rho$-zCDP.

\section{Proofs of \Cref{sectionSobolevSpacesUpperBounds}}

\subsection{Proof of \Cref{lemmaBiasUBSobolev}}
\label{proofOfLemmaBiasUBSobolev}

We will need the following lemma :
\begin{lemma}[Fourier tail in Sobolev spaces]
\label{lemmaFourierShrinkage}
    If $f \in \mathcal{S}_L^p(\beta)$, then 
    \begin{equation}
        \sum_{k \in \Z^d} \p{\sum_{\alpha \in \N^d: |\alpha| = \beta} (2 \pi k)^{\times 2 \alpha}} |\theta_k|^2 \leq L^2 \;,
    \end{equation}
    where $(\theta_k)_{k \in \Z^d}$ are the Fourier coefficients of $f$ w.r.t. the basis $(\phi_k)_{k \in \Z^d}$.
\end{lemma}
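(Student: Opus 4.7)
The plan is to express the Fourier coefficients of $\partial^\alpha f$ in terms of $(\theta_k)_k$, apply Parseval's identity in $L^2([0,1]^d)$, and then sum over $\alpha$ with $|\alpha|=\beta$ to use the Sobolev bound.

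First, I would establish the differentiation-in-Fourier formula: for every multi-index $\alpha\in\N^d$ with $|\alpha|\le\beta$, the $k$-th Fourier coefficient of $\partial^\alpha f$ satisfies
\begin{equation*}
    \widehat{\partial^\alpha f}(k) \;=\; (i_\C 2\pi k)^{\times \alpha}\,\theta_k.
\end{equation*}
This is obtained by iterated integration by parts, one coordinate at a time. Each integration by parts in the $i$-th variable produces a boundary term of the form $[\partial^{\alpha'} f \cdot \bar\phi_k]_{x_i=0}^{x_i=1}$ for some $\alpha'$ with $|\alpha'|<\beta$. The periodicity condition \eqref{equationPeriodicityCondition} in the definition of $\mathcal{S}_L^p(\beta)$ states exactly that $\partial^{\alpha'}f$ takes the same value at $x_i=0$ and $x_i=1$ for such $\alpha'$; since the Fourier basis functions $\phi_k$ are themselves $1$-periodic in each variable, each boundary term vanishes. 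After $|\alpha|$ such integrations by parts, the factor $(i_\C 2\pi k)^{\times\alpha}$ accumulates.

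Next, for each $\alpha$ with $|\alpha|=\beta$, I would apply Parseval's identity to $\partial^\alpha f\in L^2([0,1]^d)$ (which lies in $L^2$ by the definition of $\mathcal{S}_L(\beta)$):
\begin{equation*}
    \int_{[0,1]^d}|\partial^\alpha f|^2
    \;=\;\sum_{k\in\Z^d}\bigl|\widehat{\partial^\alpha f}(k)\bigr|^2
    \;=\;\sum_{k\in\Z^d}(2\pi k)^{\times 2\alpha}\,|\theta_k|^2,
\end{equation*}
using $|(i_\C 2\pi k)^{\times\alpha}|^2 = (2\pi k)^{\times 2\alpha}$ (because $|i_\C|=1$ and the modulus distributes across the product). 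Finally I would sum this identity over all $\alpha\in\N^d$ with $|\alpha|=\beta$, swap the (non-negative) sums by Tonelli, and invoke the Sobolev bound \eqref{equationBoundedEnergySobolev} to conclude
\begin{equation*}
    \sum_{k\in\Z^d}\Bigl(\sum_{|\alpha|=\beta}(2\pi k)^{\times 2\alpha}\Bigr)|\theta_k|^2
    \;=\;\sum_{|\alpha|=\beta}\int_{[0,1]^d}|\partial^\alpha f|^2 \;\le\; L^2.
\end{equation*}

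The only genuinely delicate step is the integration-by-parts identity: one must make sure that the periodicity assumption is applied to the correct order of derivatives. Since each boundary term involves a derivative of order strictly less than $\beta$ (we peel them off one at a time), the hypothesis ``$|\alpha|\le\beta$ strict'' in \eqref{equationPeriodicityCondition} is exactly what is needed. Everything else is standard Fourier analysis on the torus.
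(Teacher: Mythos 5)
Your proposal is correct and follows essentially the same route as the paper's proof: iterated integration by parts (one coordinate at a time, with boundary terms cancelled by the periodicity condition) to obtain $\widehat{\partial^\alpha f}(k) = (i_\C 2\pi k)^{\times\alpha}\theta_k$, followed by Parseval and a sum over $\alpha$ with $|\alpha|=\beta$. The only cosmetic difference is that you spell out explicitly why the boundary terms vanish (equality of $\partial^{\alpha'}f$ at $x_i=0$ and $x_i=1$ plus $1$-periodicity of $\bar\phi_k$), whereas the paper displays the boundary terms and drops them without comment.
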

\begin{proof}
Let $\alpha \in \N^d$ such that $| \alpha | = \beta$ and let $k \in \Z^d$. Let us look $\theta_k^{(\alpha)}$ at the $k^\text{th}$ Fourier coefficient of $\partial^\alpha f$. Since $\beta \geq 1$, there exists $i_0 \in \N$ such that $\alpha_i \geq 1$. We note $\alpha_{-i_0}$ the multi-index with the same values as $\alpha$ except for its $i_0^{\text{th}}$ coordinate which has been decremented by $1$. Furthermore, for any $x \in [0, 1]^d$, any $i \in \{1, \dots, d\}$, and any $y \in [0, 1]$ we note $x^{(i=y)}$ the vector with the same components as $x$ but with $y$ as its $i^{\text{th}}$ component.

We have that 
\begin{equation}
    \begin{aligned}
        \theta_k^{(\alpha)} 
        &= \int_{[0, 1]^d} \partial^\alpha f (x_1, \dots, x_d) \bar{\phi_k}(x_1, \dots, x_d) dx_1 \dots dx_d \\
        &\stackrel{\text{Fubini}}{=} 
        \int_{[0, 1]^{d-1}} \p{ \int_{[0, 1]} \partial^\alpha f (x_1, \dots, x_d) \bar{\phi_k}(x_1, \dots, x_d)  dx_{i_0}}dx_1 \dots dx_{i_0 - 1} dx_{i_0 + 1} \dots dx_d \\
        &\stackrel{\text{}}{=} 
        \int_{[0, 1]^{d-1}} \p{ \int_{[0, 1]} \partial^\alpha f (x_1, \dots, x_d) e^{- i_{\C} 2 \pi ( k_1 x_1 + \dots + k_d x_d )}  dx_{i_0}}dx_1 \dots dx_{i_0 - 1} dx_{i_0 + 1} \dots dx_d \\
        &\stackrel{\text{I.B.P.}}{=} 
        \int_{[0, 1]^{d-1}} \Bigg(\partial^{\alpha_{-i_0}} f (x^{(i_0=1)}) e^{- i_{\C} 2 \pi \langle k,  x^{(i_0=1)} \rangle} - \partial^{\alpha_{-i_0}} f (x^{(i_0=0)}) e^{- i_{\C} 2 \pi \langle k,  x^{(i_0=0)} \rangle} \\
        &\quad\quad\quad\quad+ i_{\C} 2 \pi k_{i_0} \int_{[0, 1]} \partial^{\alpha_{-i_0}} f (x_1, \dots, x_d) e^{- i_{\C} 2 \pi ( k_1 x_1 + \dots + k_d x_d )}  dx_{i_0} \Bigg) dx_1 \dots dx_{i_0 - 1} dx_{i_0 + 1} \dots dx_d \\
        &\stackrel{\text{}}{=} 
        i_{\C} 2 \pi k_{i_0} \theta_k^{(\alpha_{-i_0})} \;.
    \end{aligned}
\end{equation}

Thus, by induction, we get that 
\begin{equation}
\label{equationAhjokdiqjsdhfoiq}
    \theta_k^{(\alpha)} = (i_{\C} 2 \pi k)^{\times \alpha} \theta_k \;.
\end{equation}

Next, since it holds for any $k$, we may write
\begin{equation}
\label{equationThiuboiuhabzef}
    \begin{aligned}
        \int_{[0, 1]^d} |\partial^\alpha f|^2 
        &\stackrel{\text{Parseval}}{=}  \sum_{k \in \Z^d} \langle \theta_k^{(\alpha)} , \theta_k^{(\alpha)} \rangle \\
        &\stackrel{\text{\eqref{equationAhjokdiqjsdhfoiq}}}{=}
        \sum_{k \in \Z^d} (2 \pi k)^{\times 2 \alpha} \langle \theta_k, \theta_k \rangle \;.
    \end{aligned}
\end{equation}

Finally, since this holds for any $\alpha$, we may sum over $\alpha$ and use \eqref{equationBoundedEnergySobolev} to get that 
\begin{equation}
    \begin{aligned}
        L^2 
        &\geq 
        \sum_{\alpha \in \N^d : |\alpha| = \beta} \int_{[0, 1]^d} |\partial^\alpha f|^2 \\
        &\stackrel{\text{\eqref{equationThiuboiuhabzef}}}{=}
        \sum_{\alpha \in \N^d : |\alpha| = \beta} \sum_{k \in \Z^d} (2 \pi k)^{\times 2 \alpha} \langle \theta_k, \theta_k \rangle \\ 
        &= \sum_{k \in \Z^d} \p{\sum_{\alpha \in \N^d: |\alpha| = \beta} (2 \pi k)^{\times 2 \alpha}} |\theta_k|^2 \;.
    \end{aligned}
\end{equation}
\end{proof}

If $k = (k_1, \dots, k_d) \in \Z^d \setminus \{-M, \dots, M \}^d$, then there exists $i_0$ such that
\begin{equation}
    |k_{i_0}| \geq M + 1 \;.
\end{equation}
By considering the multi-index $\alpha_0$ composed with only $0$'s except at the index $i_0$ to which we assign the value $\beta$, we thus get 
\begin{equation}
    (2 \pi (M+1))^{2 \beta} \leq (2 \pi k)^{\times 2 \alpha_0}\;,
\end{equation}
which allows writing
\begin{equation}
\label{jhvbkajhzbdkfjhbqhsdf}
    (2 \pi (M+1))^{2 \beta} \leq \sum_{\alpha \in \N^d: |\alpha| = \beta} (2 \pi k)^{\times 2 \alpha}
\end{equation}
since $\alpha_0$ is part of the summation indexes of the right-hand side.

Combining the last inequality with \Cref{lemmaFourierShrinkage} yields
\begin{equation}
\begin{aligned}
\sum_{k \in \Z^d \setminus \{-M, \dots, M \}} (2 \pi (M+1))^{2 \beta} |\theta_k|^2
&\stackrel{\eqref{jhvbkajhzbdkfjhbqhsdf}}{\leq} \sum_{k \in \Z^d \setminus \{-M, \dots, M \}} \p{\sum_{\alpha \in \N^d: |\alpha| = \beta} (2 \pi k)^{\times 2 \alpha}} |\theta_k|^2\\
&\leq
\sum_{k \in \Z^d} \p{\sum_{\alpha \in \N^d: |\alpha| = \beta} (2 \pi k)^{\times 2 \alpha}} |\theta_k|^2 \\
&\stackrel{\Cref{lemmaFourierShrinkage}}{\leq} L^2 \;.
\end{aligned}
\end{equation}
Hence,
\begin{equation}
    \begin{aligned}
        \frac{L^2}{(2 \pi)^{2 \beta}} \frac{1}{(M+1)^{2 \beta}}
        &\geq \sum_{k \in \Z^d \setminus \{-M, \dots, M \}}  |\theta_k|^2 \\
        &\stackrel{\text{Parseval}}{=} \| f - f_M\|^2 \;.
    \end{aligned}
\end{equation}

\subsection{Proof of \Cref{theoremUpperBoundSobolevClass}}
\label{proofOfTheoremUpperBoundSobolevClass}

The privacy of this mechanism is a direct consequence of \Cref{theoremPrivacyProjectionEstimator}.
Below, $\square$ will refer to a constant that depends on $\beta$ and $L$, whose value may change from line to line, and that is independent from  $n$ and $\rho$.

By combining \Cref{lemmaBiasVarianceDecomposition} and \Cref{lemmaBiasUBSobolev} with the value of the variance factor $\sigma_M = \frac{2 \sqrt{(2M+1)^d}}{n\sqrt{\rho}}$, we get that:
\begin{equation}
\label{lkjsbckjvhbqksdfqsd}
\begin{aligned}
    \E \p{\| f - \hat{f}_M\|^2} 
    &\stackrel{\Cref{lemmaBiasVarianceDecomposition}}{\leq}
    \| f - f_M\|^2 + 
    \frac{ (2M + 1)^d}{n} + 
    2 (2M + 1)^d \p{\frac{2 \sqrt{(2M+1)^d}}{n\sqrt{\rho}}}^2 \\
    &\stackrel{\Cref{lemmaBiasUBSobolev}}{\leq}
    \frac{L^2}{(2 \pi)^{2 \beta}} \frac{1}{(M+1)^{2 \beta}} + 
    \frac{ (2M + 1)^d}{n} + 
    2 (2M + 1)^d \p{\frac{2 \sqrt{(2M+1)^d}}{n\sqrt{\rho}}}^2 \\
    &\stackrel{2K+1 \leq 2(K+1)}{\leq}
    \frac{L^2}{(2 \pi)^{2 \beta}} \frac{1}{(M+1)^{2 \beta}} + 
    \frac{ 2^d (M + 1)^d}{n} + 
    2^{d+1} (M + 1)^d \p{\frac{2 \sqrt{2 ^d (M+1)^d}}{n\sqrt{\rho}}}^2 \\
    &\stackrel{}{\leq} \square
    \p{
    \frac{1}{(M+1)^{2 \beta}} + 
    \frac{ 2^d (M + 1)^d}{n} +\frac{2^{2d} (M+1)^{2d}}{n^2 \rho}} \;,
\end{aligned}
    \end{equation} 
We then find the optimal trade-off for $M$ by separating the regimes where the variance is dominated by the sampling noise or by the privacy noise.

\begin{itemize}
    \item \textbf{Bias - Sampling variance equilibrium :}
    We may first observe that
    \begin{equation}
        \frac{1}{(M+1)^{2 \beta}} \geq
    \frac{ 2^d (M + 1)^d}{n}
 \Longleftrightarrow      M+1 \leq \p{n / 2^d}^{\frac{1}{2 \beta + d}} \;.
    \end{equation}
    \item \textbf{Bias - Privacy variance equilibrium :}
    In the meantime, we get:
    \begin{equation}
        \frac{1}{(M+1)^{2 \beta}} \geq
    \frac{2^{2d} (M+1)^{2d}}{n^2 \rho}
    \Longleftrightarrow  M+1 \leq \p{n \sqrt{\rho} / 2^d}^{\frac{1}{\beta + d}} \;.
    \end{equation}
\end{itemize}

Hence, by taking 
\begin{equation}
        M + 1 = \min \left\{ \left\lfloor \p{n / 2^d}^{\frac{1}{2 \beta + d}} \right\rfloor, \left\lfloor\p{n \sqrt{\rho} / 2^d}^{\frac{1}{\beta + d}} \right\rfloor \right\} \;,
    \end{equation} 
we have that 
\begin{equation}
    \max \left\{ \frac{ 2^d (M + 1)^d}{n},  \frac{2^{2d} (M+1)^{2d}}{n^2 \rho}\right\}
    \leq \frac{1}{(M+1)^{2 \beta}} \;,
\end{equation}
and \Cref{lkjsbckjvhbqksdfqsd} yields
\begin{equation}
        \E \p{\| f - \hat{f}_M\|^2} \leq C \frac{1}{(M+1)^{2 \beta}} \;.
    \end{equation}

\section{Proofs of \Cref{sec:lowerbounds}}

\subsection{Proof of \Cref{theoremLowerBoundSobolev}}
\label{proofOfTheoremLowerBoundSobolev}

Let $m$ be an integer that will be specified later on in the proof. We consider the grid 
\begin{equation}
    \underbrace{\left\{ \frac{1}{m+1}, \frac{2}{m+1}, \dots, \frac{m}{m+1} \right\} \times \dots \times \left\{ \frac{1}{m+1}, \frac{2}{m+1}, \dots, \frac{m}{m+1} \right\}}_{d \text{ times}}\;.
\end{equation}
It has $m^d$ points, and is hence in bijection with $\{1, \dots, m^d \}$. For any $i \in \{1, \dots, m^d \}$, we identify $p_i$ with a unique point on this grid. 
By construction, we have that 
\begin{equation}
    \forall i, j \in \{1, \dots, m^d \}, \quad i \neq j \implies \| p_i - p_j\| \geq \frac{1}{m+1} \;.
\end{equation}

Now, let us consider the function $\Psi$ given by \Cref{existencecompactsupport} in dimension $d$. We note $\psi(\cdot) = a \Psi\p{\frac{\cdot}{2}}$ where $a>0$ is fixed to a small enough value such that 
\begin{equation}
\label{lkjbqsdkjhfbqsd}
    \sum_{\alpha \in \N^d : |\alpha| = \beta} \int_{[0, 1]^d} |\partial^\alpha \psi|^2 \leq L^2\;.
\end{equation}
We also define $\gamma = \int \psi$ and $\delta = \int \psi^2$.

Let $1 \geq h>0$. For any $\theta \in \{0, 1 \}^{m^d}$, we define 
\begin{equation}
\label{eq:definitionpacking}
    f_{\theta}(\cdot) \eqdef 1 + h^{\beta} \sum_{i = 1}^{m^d} \theta_i \psi \p{\frac{\cdot - p_i}{h}} - \|\theta \|_1 \gamma h^{\beta+d} \;.
\end{equation}

Let us investigate the conditions under which $f_{\theta}$ is a density of probability w.r.t. Lebesgue's measure on $[0, 1]^d$.
\begin{itemize}
    \item For any $\theta$, $f_{\theta}$ is continuous and hence measurable.
    \item $f_{\theta}$ has to be positive for any $\theta$. This is for instance the case when for any $\theta$, $\|\theta \|_1 \gamma h^{\beta+d} \leq 1$. Since $\|\theta \|_1 \leq m^d$ for any $\theta$, fixing $h = \min \left\{ \frac{1}{\gamma (m+1)}, \frac{1}{4 (m+1)}\right\}$ is enough to ensure that condition. The reason why we added the term $\frac{1}{4 (m+1)}$ in the minimum and why we took $m+1$ instead of $m$ is that we also have that for any $i$, $\psi \p{\frac{\cdot - p_i}{h}}$ has its support in $(0, 1)^d$ and that $i \neq j \implies $ $\psi \p{\frac{\cdot - p_i}{h}}$ and $\psi \p{\frac{\cdot - p_j}{h}}$ have disjoint supports.
    \item For any $\theta$, we need $\int f_{\theta}=1$, which is immediate by construction with a simple  variable swap of inverse Jacobian $h^d$ :
    \begin{equation}
        \begin{aligned}
            \int_{[0, 1]^d} f_{\theta}
            &=
            \int_{[0, 1]^d} \p{1 + h^{\beta} \sum_{i = 1}^{m^d} \theta_i \psi \p{\frac{x - p_i}{h}} - \|\theta \|_1 \gamma h^{\beta+d}} dx \\
            &=
            1 + h^{\beta} \sum_{i = 1}^{m^d} \theta_i \int_{[0, 1]^d}  \psi \p{\frac{x - p_i}{h}}  dx - \|\theta \|_1 \gamma h^{\beta+d} \\
            &\stackrel{u_i = \frac{x - p_i}{h}}{ = }
            1 + \|\theta \|_1 \gamma h^{\beta+d} - \|\theta \|_1 \gamma h^{\beta+d} \\
            &= 1
        \end{aligned}
    \end{equation}
\end{itemize}

Furthermore, we may also check that for any $\theta$, $f_{\theta} \in \mathcal{S}_L^p(\beta)$. 
\begin{itemize}
    \item For any $\theta$, by construction, the support of $\partial^{\alpha}f_{\theta}$ is included in $(0, 1)^d$ for any multi-index $\alpha$ such that $|\alpha| \geq 1$. Hence, the periodicity argument holds trivially since $\partial^{\alpha}f_{\theta} = 0$ on the boundary of $[0, 1]^d$. Furthermore, since $f_{\theta}$ is constant on the boundary of $[0, 1]^d$, the periodicity argument also holds for $f_{\theta}$.
    \item Furthermore, let us fix $\theta$ and let $\alpha$ be a multi-index such that $|\alpha| = \beta$. We have 
    \begin{equation}
    \label{kjhbdskqjfhsbkjhcvbkqsdf}
\begin{aligned}
    \int_{[0, 1]^d} \p{f_{\theta}^{(\alpha)}}^2
    &= \int_{[0, 1]^d} \p{h^{\beta} \sum_{i=1}^{m^d} \theta_i \p{x \mapsto \psi\p{\frac{x - p_i}{h}}}^{(\alpha)}}^2 \\
    &= \int_{[0, 1]^d} \p{\sum_{i=1}^{m^d} \theta_i \psi^{(\alpha)}\p{\frac{\cdot - p_i}{h}}}^2 \\
    &\stackrel{\text{disjoint supports}}{=}  \sum_{i=1}^{m^d} \theta_i \int_{[0, 1]^d} \p{ \psi^{(\alpha)}\p{\frac{\cdot - p_i}{h}}}^2 \\
    &\stackrel{\|\theta\|_1 \leq m^d \& \text{ variable swap}}{\leq} m^d h^d \int_{[0, 1]^d} \p{\psi^{(\alpha)}}^2  \\
    &\stackrel{m^d h^d \leq 1}{\leq} \int_{[0, 1]^d} \p{\psi^{(\alpha)}}^2\;,
\end{aligned}
\end{equation}
Consequently, summing over $\alpha$ yields
\begin{equation}
    \sum_{\alpha \in \N^d : |\alpha| = \beta} \int_{[0, 1]^d} \p{f_{\theta}^{(\alpha)}}^2 \leq \sum_{\alpha \in \N^d : |\alpha| = \beta} \int_{[0, 1]^d} \p{\psi^{(\alpha)}}^2 
    \stackrel{\eqref{lkjbqsdkjhfbqsd}}{\leq} L^2 \;.
\end{equation}
\end{itemize}

Now we will used what is usually referred to as Assouad's lemma, and that has been successfully used to prove lower-bounds under differential privacy in \cite{duchi2013local,duchi2013localpreprint,duchi2018minimax,acharya2021differentially}. The following result is a minor reformulation to match the notations of the article of the version that can be found in \cite{acharya2021differentially}.

\begin{fact}[Assouad's Lemma]
\label{factAssouad}
    If $(f_{\theta})$ is a family of densities of probability that is parametrized by $\theta \in \{0, 1\}^N$, and if there exists a $\tau > 0$ such that 
    \begin{equation}
    \label{kjhbkjqhsbdkf}
        \forall (\theta_1, \theta_2)\,: \quad 
        \| f_{\theta_1} - f_{\theta_2}\|^2 \geq C \tau \ham{\theta_1}{\theta_2} \;,
    \end{equation}
    then there exists an absolute constant $C >0$ such that for any estimator $\hat{f}$, by noting $\hat{\theta}$ the parameter of the closest $f_{\theta}$ in the family $(f_{\theta})_{\theta \in \{0, 1 \}^N}$ for the norm $\| \cdot \|$, then 
    \begin{equation}
    \label{jhbkqjshdf}
        \sup_{\theta \in \{0, 1 \}^N} \E_{{f_\theta}^{\otimes n}} \p{\|f_\theta - \hat{f}\|^2}
        \geq C \tau \sum_{i=1}^{N} \p{\Prob_{{{\theta_{-i}}}} (\hat{\theta}^i \neq 0) + \Prob_{{{\theta_{+i}}}} (\hat{\theta}^i \neq 1)}
    \end{equation}
    where $\Prob_{{{\theta_{+i}}}}$ and $\Prob_{{{\theta_{-i}}}}$ are the mixture distributions
    \begin{equation}
        \Prob_{{{\theta_{+i}}}} \eqdef \frac{1}{2^{N-1}} \sum_{\theta : \theta^i = 1} f_{\theta}^{\otimes n} \quad \Prob_{{{\theta_{-i}}}} \eqdef \frac{1}{2^{N-1}} \sum_{\theta : \theta^i = 0} f_{\theta}^{\otimes n}\;.
    \end{equation}
    Notice that in \eqref{jhbkqjshdf} there is a second layer or randomness that is implicit, and that is w.r.t. the estimator itself (for privacy for instance).
\end{fact}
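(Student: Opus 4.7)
The plan is to carry out the standard chain of inequalities underlying Assouad's lemma, adapting the notation to the statement above. I would proceed in four steps.

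First, I would replace the supremum by an average over $\{0,1\}^N$: namely,
\begin{equation*}
    \sup_{\theta \in \{0,1\}^N} \E_{f_{\theta}^{\otimes n}} \p{\|f_\theta - \hat{f}\|^2} \geq \frac{1}{2^N} \sum_{\theta \in \{0,1\}^N} \E_{f_{\theta}^{\otimes n}} \p{\|f_\theta - \hat{f}\|^2}\;.
\end{equation*}
Next, I would use the testing-reduction argument: since $\hat{\theta}$ is a minimum-distance decoder into the family $(f_{\theta})$, we have $\|f_{\hat{\theta}} - \hat{f}\| \leq \|f_{\theta} - \hat{f}\|$ for the true $\theta$, hence the triangle inequality gives $\|f_{\theta} - f_{\hat{\theta}}\| \leq 2 \|f_{\theta} - \hat{f}\|$ and therefore $\|f_{\theta} - \hat{f}\|^2 \geq \frac{1}{4}\|f_{\theta} - f_{\hat{\theta}}\|^2$.

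Then I would apply the separation hypothesis \eqref{kjhbkjqhsbdkf} to the pair $(\theta, \hat{\theta})$, which yields
\begin{equation*}
    \|f_{\theta} - \hat{f}\|^2 \geq \frac{C \tau}{4} \ham{\theta}{\hat{\theta}} = \frac{C \tau}{4} \sum_{i=1}^{N} \Ind_{\hat{\theta}^i \neq \theta^i}\;,
\end{equation*}
where the outer randomness of $\hat{\theta}$ is handled by taking expectations, so that the $\Ind$'s become probabilities $\Prob_{f_{\theta}^{\otimes n}}(\hat{\theta}^i \neq \theta^i)$ (incorporating both the sampling and any privacy mechanism randomness).

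Finally, I would swap the summations over $i$ and $\theta$, and split each inner sum over $\theta$ according to whether $\theta^i = 0$ or $\theta^i = 1$:
\begin{equation*}
    \frac{1}{2^N} \sum_{\theta} \Prob_{f_{\theta}^{\otimes n}}(\hat{\theta}^i \neq \theta^i) = \frac{1}{2} \cdot \frac{1}{2^{N-1}} \sum_{\theta : \theta^i = 0} \Prob_{f_{\theta}^{\otimes n}}(\hat{\theta}^i \neq 0) + \frac{1}{2} \cdot \frac{1}{2^{N-1}} \sum_{\theta : \theta^i = 1} \Prob_{f_{\theta}^{\otimes n}}(\hat{\theta}^i \neq 1)\;,
\end{equation*}
and recognize the right-hand side as $\frac{1}{2}\p{\Prob_{\theta_{-i}}(\hat{\theta}^i \neq 0) + \Prob_{\theta_{+i}}(\hat{\theta}^i \neq 1)}$ by the very definition of the mixtures $\Prob_{\theta_{-i}}$ and $\Prob_{\theta_{+i}}$ given in the statement. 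Concatenating the three bounds delivers the claimed inequality with an absolute constant of the order of $1/8$ times the constant appearing in \eqref{kjhbkjqhsbdkf}.

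There is no real obstacle here: the argument is a textbook Assouad-style reduction. The only point requiring minor care is that the randomness of $\hat{\theta}$ comes both from the i.i.d.\ sample and from the private mechanism, so that all quantities $\Prob_{f_{\theta}^{\otimes n}}(\cdot)$ must be interpreted as probabilities under the joint law of the data and the mechanism; but since expectations and Fubini pass through this extra layer without change, the derivation above goes through verbatim.
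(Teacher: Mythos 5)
Your proof is correct: it is the standard minimum-distance/Assouad reduction (replace the supremum by the uniform average over $\theta$, use the triangle inequality for the closest-point decoder to get $\|f_\theta-\hat f\|^2\geq \tfrac14\|f_\theta-f_{\hat\theta}\|^2$, invoke the separation hypothesis, then regroup the average over $\theta$ coordinate by coordinate into the two mixtures), and the constant bookkeeping (about $1/8$ of the separation constant, with the mechanism's randomness absorbed by Fubini) is right. The paper itself gives no argument for this fact beyond citing Acharya et al.\ (2021), and your derivation is essentially the proof found there, so nothing further is needed.
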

\begin{proof}
    The proof can be found in \cite{acharya2021differentially}.
\end{proof}

We will apply this result with $N = m^d$.
First, we will check that \eqref{kjhbkjqhsbdkf} holds. 

Let $\theta_1, \theta_2$ be two parametrizations. We have that 
\begin{equation}
\label{eq:distkernelsaw}
\begin{aligned}
    \int_{[0, 1]^d} &\p{f_{\theta_1} - f_{\theta_2}}^2 \\
    &\geq 
    \sum_{i=1}^{m^d} \Ind_{\theta_1^i \neq \theta_2^i} \int_{B(p_i, h/2)}
    \p{h^{\beta + d} \p{\| \theta_2 \|_1 - \| \theta_1 \|_1} \gamma +(\theta_1^i-\theta_2^i) h^{\beta } \psi\p{\frac{t-p_i }{h}}}^2 dt\\
    &\geq
    \sum_{i=1}^{m^d} \Ind_{\theta_1^i \neq \theta_2^i} \int_{B(p_i, h/2)}
    \left\{\p{h^{\beta } \psi\p{\frac{t-p_i}{h}}}^2 \right.\\
    &\quad\quad\quad\quad\quad\quad\quad\quad\quad\quad \left.- 2 \gamma h^{2\beta +d} \left|\| \theta_1 \|_1 - \| \theta_2 \|_1\right| \psi\p{\frac{t-p_i}{h}}  \right\} dt\\ 
    &\stackrel{\text{variable swap}}{\geq}
    \ham{\theta_1}{\theta_2} h^{2 \beta + d}\p{  \delta - 2m^d h^d \gamma^2} \\
    &\stackrel{h = \min \left\{h, \frac{1}{m+1} \p{\delta / (4 \gamma^2)^{1/d}} \right\}}{\geq} \ham{\theta_1}{\theta_2} h^{2 \beta + d} \delta / 2\;,
\end{aligned}
\end{equation}
where we took the liberty to take a smaller $h$ if needed, with still a scaling proportional to $\frac{1}{m+1}$.

Then, we need to control the term $\Prob_{{{\theta_{-i}}}} (\hat{\theta}^i \neq 0) + \Prob_{{{\theta_{+i}}}} (\hat{\theta}^i \neq 1)$.

\paragraph{Privacy cost.}

First, we do so by exploiting the constraint of $\rho$-zCDP. Let us give the following lemma, which is borrowed from \cite{lalanne2023about}.
\begin{lemma}
\label{lemmalecamassouad}
    If $\hat{f}$ satisfies $\rho$-zCDP, then for any $i$,
    \begin{equation*}
    \begin{aligned}
    &\Prob_{{{\theta_{-i}}}} (\hat{\theta}^i \neq 0) + \Prob_{{{\theta_{+i}}}} (\hat{\theta}^i \neq 1)
    \geq \\
    &\quad\quad
    \frac{1}{2} \p{1 - n\sqrt{\rho/2} \frac{1}{2^{N-1}}\sum_{\theta^1, \dots, \theta^{i-1}, \theta^{i+1} \dots, \theta^{N} \in \{0, 1 \}} \tv{f_{(\theta^1, \dots, \theta^{i-1}, 0, \theta^{i+1} \dots, \theta^{N})}}{f_{(\theta^1, \dots, \theta^{i-1}, 1, \theta^{i+1} \dots, \theta^{N})}}} \;,
    \end{aligned}
\end{equation*}
where $\tv{\cdot}{\cdot}$ denotes the \emph{total variation} distance between probability measures defined as 
    \begin{equation*}
        \tv{\Prob_1}{\Prob_2} \eqdef \sup_{S \text{ measurable}} \Prob_1(S) - \Prob_2(S) \;. 
    \end{equation*}
\end{lemma}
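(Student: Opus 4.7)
The plan is to reduce the sum of testing errors to a total variation distance between mixtures via Le Cam, push that total variation inside the mixtures by convexity, and then turn the privacy constraint into a quantitative bound through a coupling combined with a group privacy argument for $\rho$-zCDP.

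The first two steps are standard. Le Cam's two-point inequality gives, for any $\{0,1\}$-valued decision rule and any two input laws $P_0, P_1$,
\begin{equation*}
P_0(\hat{\theta}^i \neq 0) + P_1(\hat{\theta}^i \neq 1) \geq 1 - \tv{P_0}{P_1} \geq \tfrac{1}{2}\bigl(1 - 2\,\tv{P_0}{P_1}\bigr),
\end{equation*}
so that (taking $P_0 = \Prob_{\theta_{-i}}$ and $P_1 = \Prob_{\theta_{+i}}$) it suffices to upper bound $\tv{\Prob_{\theta_{-i}}}{\Prob_{\theta_{+i}}}$ by $\tfrac{1}{2}\, n\sqrt{\rho/2}$ times the average of inner TVs. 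Since both $\Prob_{\theta_{\pm i}}$ are uniform mixtures over the $2^{N-1}$ choices of the remaining coordinates $\theta^{-i} \eqdef (\theta^1, \dots, \theta^{i-1}, \theta^{i+1}, \dots, \theta^N)$, joint convexity of total variation gives
\begin{equation*}
\tv{\Prob_{\theta_{-i}}}{\Prob_{\theta_{+i}}} \leq \frac{1}{2^{N-1}}\sum_{\theta^{-i}} \tv{\hat f(f_-^{\otimes n})}{\hat f(f_+^{\otimes n})},
\end{equation*}
where $f_\pm$ stand for the single-sample densities $f_{(\theta^{-i}, 0)}$ and $f_{(\theta^{-i}, 1)}$, and $\hat f(f_\pm^{\otimes n})$ denotes the output law of $\hat f$ on input drawn from $f_\pm^{\otimes n}$.

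The third and key step bounds each inner total variation through coupling and the privacy constraint. I would build an independent maximal per-sample coupling $\pi$ of $f_-^{\otimes n}$ and $f_+^{\otimes n}$, so that the paired datasets $(\vect X, \vect Y) \sim \pi$ satisfy $\E_\pi[\ham{\vect X}{\vect Y}] = n\,\tv{f_-}{f_+}$. Conditionally on $\ham{\vect X}{\vect Y} = k$, group privacy for $\rho$-zCDP (iterating \Cref{definitionConcentratedDifferentialPrivacy} along any Hamming path) gives $\renyi{\alpha}{\hat f(\vect X)}{\hat f(\vect Y)} \leq k^2 \rho \alpha$ for every $\alpha > 1$; letting $\alpha \to 1^+$ produces the KL bound $\kl{\hat f(\vect X)}{\hat f(\vect Y)} \leq k^2 \rho$, and Pinsker's inequality then gives $\tv{\hat f(\vect X)}{\hat f(\vect Y)} \leq k\sqrt{\rho/2}$. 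Joint convexity of total variation (equivalently, data processing applied to the coupling) then yields
\begin{equation*}
\tv{\hat f(f_-^{\otimes n})}{\hat f(f_+^{\otimes n})} \leq \E_\pi\bigl[\tv{\hat f(\vect X)}{\hat f(\vect Y)}\bigr] \leq \sqrt{\rho/2}\,\E_\pi[\ham{\vect X}{\vect Y}] = n\sqrt{\rho/2}\,\tv{f_-}{f_+},
\end{equation*}
which, chained with the first two steps, concludes the proof.

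The main obstacle is the Pinsker step: group privacy inflates the R\'enyi divergence quadratically in $k$, so it is crucial to convert KL into TV \emph{before} averaging over $\pi$, so that the bound becomes linear in $k$ and $\E_\pi[k]$ is controlled cleanly by $n\,\tv{f_-}{f_+}$. The remaining ingredients, Le Cam and the joint convexity of total variation, are entirely routine.
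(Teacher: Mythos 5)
Your proof is correct and follows essentially the same route as the paper: reduce to a total-variation bound via Le Cam, decompose the mixtures by convexity, and convert the $\rho$-zCDP constraint into a per-pair TV bound through a maximal per-sample coupling and the expected Hamming distance. The one difference is cosmetic: where the paper black-boxes the privacy-to-TV conversion by citing Lemma 8 of \cite{lalanne2022statistical}, you derive it explicitly via zCDP group privacy $\renyi{\alpha}{\cdot}{\cdot}\le k^2\rho\alpha$, the $\alpha\to 1^+$ limit to KL, and Pinsker — yielding the slightly stronger bound $1 - n\sqrt{\rho/2}\cdot(\text{avg TV})$ from which the stated $\tfrac12(1-\cdot)$ form follows trivially (so the factor-$\tfrac12$ target you announce mid-proof is in fact unnecessary).
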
 
\begin{proof}
    Let us consider the coupling $\mathcal{C}$ that selects $\theta^1, \dots, \theta^{i-1}, \theta^{i+1} \dots, \theta^{N} \in \{0, 1 \}$ uniformly at random, and then returns a random variable that follows a conditional distribution $\mathbb{Q}_{\theta^1, \dots, \theta^{i-1}, \theta^{i+1} \dots, \theta^{N}}^{\otimes n}$ where $\mathbb{Q}_{\theta^1, \dots, \theta^{i-1}, \theta^{i+1} \dots, \theta^{N}}$ is a maximal coupling between $f_{(\theta^1, \dots, \theta^{i-1}, 0, \theta^{i+1} \dots, \theta^{N})}$ and $f_{(\theta^1, \dots, \theta^{i-1}, 1, \theta^{i+1} \dots, \theta^{N})}$, in the sense that if $X, Y \sim \mathbb{Q}_{\theta^1, \dots, \theta^{i-1}, \theta^{i+1} \dots, \theta^{N}}$, then $\Prob(X = Y) = 1 -\tv{f_{(\theta^1, \dots, \theta^{i-1}, 0, \theta^{i+1} \dots, \theta^{N})}}{f_{(\theta^1, \dots, \theta^{i-1}, 1, \theta^{i+1} \dots, \theta^{N})}}$. The existence of such coupling is well known (see, \textit{e.g.} \cite{lindvall2002lectures}).

    Then, the similarity function given by Lemma 8 in \cite{lalanne2022statistical} leads to:
    \begin{equation*}
    \begin{aligned}
    &\Prob_{{{\theta_{-i}}}} (\hat{\theta}^i \neq 0) + \Prob_{{{\theta_{+i}}}} (\hat{\theta}^i \neq 1)
    \geq 
    \frac{1}{2} \p{1 - \sqrt{\rho/2} \E_{\vect{X}, \vect{Y} \sim \mathcal{C} }\p{\ham{\vect{X}}{\vect{Y}}}} \;,
    \end{aligned}
\end{equation*}
which reduces to the advertised result.
\end{proof}

Let us fix $\theta^1, \dots, \theta^{i-1}, \theta^{i+1} \dots, \theta^{m^d} \in \{0, 1 \}$, we have that, by the classical rewriting of the total variation distance $\tv{f}{g} = \frac{1}{2} \int |f-g|$,
\begin{equation}
\label{jkhgkjhqbsdkjhfb}
    \begin{aligned}
        &\tv{f_{(\theta^1, \dots, \theta^{i-1}, 0, \theta^{i+1} \dots, \theta^{m^d})}}{f_{(\theta^1, \dots, \theta^{i-1}, 1, \theta^{i+1} \dots, \theta^{m^d})}} \\
        &\quad\quad = \frac{1}{2} \int_{[0, 1]^d} \left|f_{(\theta^1, \dots, \theta^{i-1}, 0, \theta^{i+1} \dots, \theta^{m^d})} - f_{(\theta^1, \dots, \theta^{i-1}, 1, \theta^{i+1} \dots, \theta^{m^d})}\right| \\
        &\quad\quad\leq \frac{1}{2} \int_{[0, 1]^d} \p{\gamma h^{\beta + d} + h^{\beta} \psi \p{\frac{\cdot - p_i}{h}} } \\
        &\quad\quad\stackrel{\text{variable swap}}{=} \gamma h^{\beta+d}
    \end{aligned}
\end{equation}

All in all, by combining \eqref{jkhgkjhqbsdkjhfb}, \Cref{lemmalecamassouad}, \eqref{eq:distkernelsaw} and \Cref{factAssouad}, there exist two absolute constants $C_1 > 0$ and $C_2 > 0$ such that, if $\hat{f}$ satisfies $\rho$-zCDP, then:
\begin{equation}
    \sup_{\theta \in \{0, 1 \}^N} \E_{{f_\theta}^{\otimes n}} \p{\|f_\theta - \hat{f}\|^2}
    \geq C_1 h^{2 \beta + d} m^d \delta \p{1 - C_2 \gamma n\sqrt{\rho} h^{\beta + d}} \;.
\end{equation}
Finally, choosing $h$ of the order of $\p{\gamma n \sqrt{\rho}}^{-\frac{1}{\beta + d}}$, and $m+1$ of the order of $\frac{\min \left\{1/ \gamma, 1/4 , \p{\delta / (4 \gamma^2)^{1/d}}\right\}}{h}$ complies with all the requirements on $h$ for the calculus to be valid, and allows writing that there are two quantities $C_1 > 0$ and $C_2 > 0$ depending on $L$, $\beta$ and $d$ such that, if $n \sqrt{\rho} > C_2$, then
\begin{equation}
    \sup_{\theta \in \{0, 1 \}^N} \E_{{f_\theta}^{\otimes n}} \p{\|f_\theta - \hat{f}\|^2}
    \geq C_1  \p{n \sqrt{\rho}}^{-\frac{2 \beta}{\beta + d}}\;.
\end{equation}

\paragraph{Usual sampling cost.} Without trying to exploit the private nature of the estimation, we may adopt more usual lower-bounding inequalities. 

Let us fix $\hat{f}$ and $i$. Neyman-Pearson-Le Cam's inequality (Of which the proof can be found in \cite{rigollet2015high}) allows writing
\begin{equation}
    \Prob_{{{\theta_{-i}}}} (\hat{\theta}^i \neq 0) + \Prob_{{{\theta_{+i}}}} (\hat{\theta}^i \neq 1)
    \geq 
    1 - \tv{\Prob_{{{\theta_{+i}}}}}{\Prob_{{{\theta_{-i}}}}} \;.
\end{equation}
Then, Pinsker's inequality (see for instance \cite{tsybakov2003introduction}) gives
\begin{equation}
    \Prob_{{{\theta_{-i}}}} (\hat{\theta}^i \neq 0) + \Prob_{{{\theta_{+i}}}} (\hat{\theta}^i \neq 1)
    \geq 
    1 - \sqrt{ \kl{\Prob_{{{\theta_{+i}}}}}{\Prob_{{{\theta_{-i}}}}}} \;,
\end{equation} 
where $\kl{\cdot}{\cdot}$ is the Kullback-Leibler (KL) divergence which is defined for any two probability distributions $\mathbb{P}$ and $\mathbb{Q}$ such that $\mathbb{P} \ll \mathbb{Q}$ (absolute continuity) as
\begin{equation*}
    \kl{\mathbb{P}}{\mathbb{Q}} = \int \log \p{\frac{d \mathbb{P}}{ d \mathbb{Q}}} d\mathbb{P} \;.
\end{equation*}

Then, Theorem 11 in \cite{van2014renyi} gives that 
\begin{equation*}
    \kl{\frac{1}{2^{N-1}} \sum_{\theta : \theta^i = 1} f_{\theta}^{\otimes n}}{\frac{1}{2^{N-1}} \sum_{\theta : \theta^i = 0} f_{\theta}^{\otimes n}}
    \leq \frac{1}{2^{N-1}} \sum_{\theta : \theta^i = 0} \kl{ f_{\theta^{(i \leftarrow 1)}}^{\otimes n}}{ f_{\theta^{(i \leftarrow 0)}}^{\otimes n}} \;,
\end{equation*}
where $\theta^{(i \leftarrow j)}$ means that we assign $j$ as the value of the $i^{\text{th}}$ component in $\theta$. 

Finally, by the tensorization property of the KL divergence \cite{van2014renyi} 
\begin{equation}
\label{kjhbkdjqhbsdkjfhq}
    \Prob_{{{\theta_{-i}}}} (\hat{\theta}^i \neq 0) + \Prob_{{{\theta_{+i}}}} (\hat{\theta}^i \neq 1)
    \geq 
    1 - \sqrt{\frac{1}{2^{N-1}} \sum_{\theta : \theta^i = 0} n \kl{ f_{\theta^{(i \leftarrow 1)}}}{ f_{\theta^{(i \leftarrow 0)}}}} \;.
\end{equation} 

Let us fix a $\theta$. We will upper-bound $\kl{ f_{\theta^{(i \leftarrow 1)}}}{ f_{\theta^{(i \leftarrow 0)}}}$ uniformly in $\theta$.
By definition,
\begin{equation}
    \kl{ f_{\theta^{(i \leftarrow 1)}}}{ f_{\theta^{(i \leftarrow 0)}}} 
    = 
    \int_{[0, 1]^d} \log \p{\frac{ f_{\theta^{(i \leftarrow 1)}}}{ f_{\theta^{(i \leftarrow 0)}}}}  f_{\theta^{(i \leftarrow 1)}} \;,
\end{equation}
and a classical upper bound of the KL divergence by the $\chi^2$-divergence which follows from $\log(\cdot) \leq \cdot - 1$ gives
\begin{equation}
    \kl{ f_{\theta^{(i \leftarrow 1)}}}{ f_{\theta^{(i \leftarrow 0)}}} 
    = 
    \int_{[0, 1]^d} \frac{(f_{\theta^{(i \leftarrow 1)}} - f_{\theta^{(i \leftarrow 0)}})^2}{f_{\theta^{(i \leftarrow 0)}}} \;.
\end{equation}
Notice that we took the liberty to divide by various densities of probability without justifying why they were different from $0$. We will solve this issue right now, and also control the denominator $f_{\theta^{(i \leftarrow 0)}}$ at the same time.

When we made sure that for any $\theta$, $f_{\theta}$ was always positive, we imposed that $m^d \gamma h^{\beta+d} \leq 1$. We can be more aggressive and impose that $m^d \gamma h^{\beta+d} \leq 1/2$, for instance by taking $ h \leq \frac{1}{2 \gamma (m+1)}$. This way, we have that for any $\theta$, $f_{\theta} \ge 1/2$.

As a consequence, 
\begin{equation}
\begin{aligned}
    \kl{ f_{\theta^{(i \leftarrow 1)}}}{ f_{\theta^{(i \leftarrow 0)}}} 
    &\leq
    2 \int_{[0, 1]^d} (f_{\theta^{(i \leftarrow 1)}} - f_{\theta^{(i \leftarrow 0)}})^2 \\
    &\leq 2
    \int_{[0, 1]^d} \p{\gamma h^{\beta + d} + h^{\beta} \psi \p{\frac{x - p_i}{h}}}^2 \\
    &= 2 \p{\gamma^2 h^{2 \beta + 2d} + 2\gamma^2 h^{2 \beta + 2d} + \delta h^{2 \beta + d}}\;.
\end{aligned}
\end{equation}
So, there exist $C_1>0$ and $C_2 >0$ that depend on $L$, $\beta$ and $d$ such that when $h < C_2$, then 
\begin{equation}
\begin{aligned}
    \kl{ f_{\theta^{(i \leftarrow 1)}}}{ f_{\theta^{(i \leftarrow 0)}}} 
    &\leq
    C_1 h^{2 \beta + d}\;.
\end{aligned}
\end{equation}
Furthermore, we can note that $C_1$ and $C_2$ are uniform in $\theta$.

Combining this last result with \eqref{kjhbkdjqhbsdkjfhq}, \Cref{factAssouad} and \eqref{eq:distkernelsaw}, we obtain that there exists an absolute $C_3 > 0$ such that, for any estimator $\hat{f}$,
\begin{equation}
    \sup_{\theta \in \{0, 1 \}^N} \E_{{f_\theta}^{\otimes n}} \p{\|f_\theta - \hat{f}\|^2}
    \geq C_3 h^{2 \beta + d} m^d \delta \p{1 - \sqrt{C_1 n h^{2 \beta + d}}} \;,
\end{equation}
as soon as $h < C_2$.

In the end, choosing $h$ of the order of $\p{n}^{-\frac{1}{2 \beta + d}}$, and $m+1$ of the order of $\frac{\min \left\{1/ (2\gamma), 1/4 , \p{\delta / (4 \gamma^2)^{1/d}}\right\}}{h}$ complies with all the requirements on $h$ for the calculus to be valid, and allows writing that there are two quantities $C_1 > 0$ and $C_2 > 0$ depending on $L$, $\beta$ and $d$ such that, if $n  > C_2$, then
\begin{equation}
    \sup_{\theta \in \{0, 1 \}^N} \E_{{f_\theta}^{\otimes n}} \p{\|f_\theta - \hat{f}\|^2}
    \geq C_1  n^{-\frac{2 \beta}{2 \beta + d}}\;.
\end{equation}

The two lower-bounds being valid for $\rho$-zCDP estimators, their maximum is also a lower-bound, yielding the result.

\section{Proofs of \Cref{sectionAdaptivity}}

\subsection{Proof of \Cref{theo:Lepskii_rate}}
\label{proofoftheo:Lepskii_rate}

We also define $m^*$ the integer that is associated to the closest point (from below) of the grid $\mathbb{B}_n$ to the unknown smoothness parameter $\beta$:
\begin{equation}
    \label{def:m*}
    m^* =  \min \{ m \leq k_n \, : \beta_m \leq \beta \} \quad \text{and} \quad \beta^* = \beta_{m^*}.   
\end{equation}
We emphasize that $m^*$ is a theoretical object, which is purely deterministic and not used in our adaptative procedure. We nevertheless need $m^*$ for our mathematical analysis of the Lepskii method.
For the sake of clarity, we will use the following shortcut of notations to improve the readability of our paper:
$$
\hat{f}_{M_{n,\rho'_n}(\beta_{\hat{m}_n})} = \hat{f}_{\hat{M}} \quad \text{and} \quad 
\hat{f}_{M_{n,\rho'_n}(\beta_{m^*})} = \hat{f}_{M^*} \quad \text{and} \quad 
\hat{f}_{M_{n,\rho'_n}(\beta_{\ell})} = \hat{f}_{M(\ell)},
$$
and the associated shortcut indices as well:
$$
\hat{M} = M_{n,\rho'_n}(\beta_{\hat{m}_n}) \quad \text{and} \quad M^*=M_{n,\rho'_n}(\beta_{m^*})
\quad \text{and} \quad M(\ell) = M_{n,\rho'_n}(\beta_{\ell}).
$$

To establish our adaptive result stated in Theorem \ref{theo:Lepskii_rate}, we need the next cornerstone result.

\begin{proposition}\label{prop:decomposition_adaptive}
Assume that $f \in S_L^p(\beta)$ with $n \ge e^\beta$, then 
$\hat{f}_{M_{n,\rho'_n}(\beta_{\hat{m}_n})} = \hat{f}_{\hat{M}}  $ satisfies:
\begin{align}
    \label{eq:adaptive_inequality}
 \mathbb{E}[\|\hat{f}_{\hat{M}} -f\|_2]& \leq
    2 \sqrt{   r_{n,\rho'_n}(\beta)^*  } 
  \exp\left( \frac{\varepsilon}{\beta+ d}\right) \\
  & + \sqrt{
    \sum_{\ell=0}^{k_n} r_{n,\rho'_n}(\beta_\ell)}  \sqrt{
    \sum_{\ell > m^*} \mathbb{P}\left[ \|\hat{f}_{M(\ell)}-f\|_2^2 > \frac{1}{4} r_{n,\rho'_n}(\beta_\ell)^* \right]}
\end{align}
\end{proposition}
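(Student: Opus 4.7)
My plan follows the classical Lepskii method, adapted to the privacy-aware rate $r_{n,\rho'_n}$. I introduce the concentration (good) event
\[
\set{A} \eqdef \left\{ \| \hat{f}_{M(\ell)} - f\|_2^2 \leq \tfrac{1}{4} r_{n,\rho'_n}(\beta_\ell)^* \text{ for every } \ell \geq m^* \right\}
\]
and decompose $\E\| \hat{f}_{\hat{M}} - f\|_2 = \E[\| \hat{f}_{\hat{M}} - f\|_2 \Ind_{\set{A}}] + \E[\| \hat{f}_{\hat{M}} - f\|_2 \Ind_{\set{A}^c}]$. The $\ell = m^*$ component of $\set{A}$ is handled in expectation via Markov's inequality combined with \Cref{theoremUpperBoundSobolevClass}: this contributes a $(\log n)^{-a}$ factor that gets absorbed into the leading term of the claim, so only $\ell > m^*$ remains in the final probability sum.

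On $\set{A}$, I will show that the Lepskii rule necessarily picks $\hat{m}_n \leq m^*$. Indeed, for any $\ell \geq m^*$, the triangle inequality, the definition of $\set{A}$, and the monotonicity of $\beta \mapsto r_{n,\rho'_n}(\beta)^*$ (recalling $\beta_\ell \leq \beta_{m^*}$ for $\ell \geq m^*$) yield
\[
\|\hat{f}_{M^*} - \hat{f}_{M(\ell)}\|_2 \leq \tfrac{1}{2} \sqrt{r_{n,\rho'_n}(\beta_{m^*})^*} + \tfrac{1}{2} \sqrt{r_{n,\rho'_n}(\beta_\ell)^*} \leq \sqrt{r_{n,\rho'_n}(\beta_\ell)^*},
\]
so $m = m^*$ satisfies \eqref{def:lepski}, forcing $\hat{m}_n \leq m^*$. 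Applying \eqref{def:lepski} once more with $m = \hat{m}_n$ and $\ell = m^*$ gives $\|\hat{f}_{\hat{M}} - \hat{f}_{M^*}\|_2 \leq \sqrt{r_{n,\rho'_n}(\beta_{m^*})^*}$, and a last triangle inequality together with the definition of $\set{A}$ produces $\|\hat{f}_{\hat{M}} - f\|_2 \leq \tfrac{3}{2}\sqrt{r_{n,\rho'_n}(\beta^*)^*}$ on $\set{A}$. To convert this to a bound in terms of $r_{n,\rho'_n}(\beta)^*$, I use $\beta - \beta^* \leq \varepsilon/\log n$ and $\rho \leq 1$ (so $n\sqrt{\rho'_n}\leq n$ and $(n\sqrt{\rho'_n})^{\varepsilon/\log n} \leq e^\varepsilon$) to compute the exponent differences in both regimes defining $r_{n,\rho'_n}$ and obtain
\[
\frac{r_{n,\rho'_n}(\beta^*)}{r_{n,\rho'_n}(\beta)} \leq \exp\!\left(\frac{2 d \varepsilon}{(\beta+d)(\beta^*+d)}\right) \leq \exp\!\left(\frac{2\varepsilon}{\beta+d}\right).
\]
Absorbing $\tfrac{3}{2}$ into $2$ produces the first term of the claim.

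For the contribution on $\set{A}^c$, Cauchy-Schwarz gives $\E[\|\hat{f}_{\hat{M}} - f\|_2 \Ind_{\set{A}^c}] \leq \sqrt{\E\|\hat{f}_{\hat{M}} - f\|_2^2}\,\sqrt{\Prob(\set{A}^c)}$. Since $\hat{f}_{\hat{M}}$ always coincides with some $\hat{f}_{M(\ell)}$, the crude bound $\E\|\hat{f}_{\hat{M}} - f\|_2^2 \leq \sum_{\ell=0}^{k_n} \E\|\hat{f}_{M(\ell)} - f\|_2^2 \leq C\sum_\ell r_{n,\rho'_n}(\beta_\ell)$ follows from \Cref{lemmaBiasVarianceDecomposition} together with \Cref{lemmaBiasUBSobolev}, using that $f\in \set{S}^p_L(\beta)\subset \set{S}^p_L(\beta_\ell)$ whenever $\beta_\ell \leq \beta$, and noting that for $\beta_\ell > \beta$ the bias bound of \Cref{lemmaBiasUBSobolev} still applies at the true smoothness $\beta$ and stays of the right order because $M(\ell)$ is smaller. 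A union bound and the absorption of $\ell = m^*$ discussed above then give $\Prob(\set{A}^c) \leq \sum_{\ell > m^*} \Prob[\|\hat{f}_{M(\ell)} - f\|_2^2 > \tfrac{1}{4} r_{n,\rho'_n}(\beta_\ell)^*]$, which produces the second term of the claim.

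\textbf{Main obstacle.} The most delicate step is the exponent comparison: extracting the clean factor $\varepsilon/(\beta+d)$ requires simultaneously handling the sampling term $n^{-2\beta/(2\beta+d)}$ and the privacy term $(n\sqrt{\rho'_n})^{-2\beta/(\beta+d)}$, with the reduction from $2d/[(\beta+d)(\beta^*+d)]$ to $2/(\beta+d)$ relying on $d \leq \beta^*+d$ and the assumption $\rho \leq 1$. The secondary subtlety is the uniform control of $\E\|\hat{f}_{M(\ell)} - f\|_2^2$ for $\ell < m^*$, where $\beta_\ell > \beta$ so $f$ need not lie in $\set{S}^p_L(\beta_\ell)$: the bias must be estimated through the true smoothness of $f$, and one must check that the resulting bound is still summable against the stated $\sum_\ell r_{n,\rho'_n}(\beta_\ell)$.
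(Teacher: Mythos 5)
Your overall architecture (decompose into a good event and its complement, triangle inequality plus the Lepskii rule on the good event, Cauchy--Schwarz on the bad event) matches the paper, and your conversion of $r_{n,\rho'_n}(\beta^*)$ into $r_{n,\rho'_n}(\beta)\exp(2\varepsilon/(\beta+d))$ using the grid spacing and $\rho\le 1$ is essentially identical to the paper's computation. The structural difference, and the place where you have a genuine gap, is the choice of good event.

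The paper decomposes on $\{\hat m_n\le m^*\}$ directly. On that event, the Lepskii rule (applied with $m=\hat m_n$, $\ell=m^*$) gives a \emph{deterministic} bound $\|\hat f_{\hat M}-\hat f_{M^*}\|_2 \le \sqrt{r_{n,\rho'_n}(\beta^*)^*}$, while the remaining piece $\|\hat f_{M^*}-f\|_2$ is controlled \emph{in expectation} by Cauchy--Schwarz combined with the non-adaptive rate of \Cref{theoremUpperBoundSobolevClass}: $\mathbb{E}[\|\hat f_{M^*}-f\|_2 \mathbf{1}_{\hat m_n\le m^*}]\le \sqrt{\mathbb{E}\|\hat f_{M^*}-f\|_2^2}\le\sqrt{r_{n,\rho'_n}(\beta^*)}$. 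This is why the paper never needs $\hat f_{M^*}$ to be well-concentrated pointwise, and hence why $\ell=m^*$ does not appear in the bad-event probability sum. Your good event $\set{A}$, by contrast, \emph{must} contain the $\ell=m^*$ constraint $\|\hat f_{M^*}-f\|_2\le \tfrac12\sqrt{r_{n,\rho'_n}(\beta^*)^*}$, because you use it both to establish $\hat m_n\le m^*$ on $\set{A}$ and in the final triangle inequality producing the $\tfrac32$ constant. Consequently $\mathbb{P}(\set{A}^c)$ picks up the term $\mathbb{P}[\|\hat f_{M^*}-f\|_2^2>\tfrac14 r_{n,\rho'_n}(\beta_{m^*})^*]$, which is not present in the proposition's $\sum_{\ell>m^*}$.

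Your proposed fix --- absorb that term via Markov into the leading term --- does not go through as written. Markov with \Cref{theoremUpperBoundSobolevClass} gives only $\mathbb{P}[\|\hat f_{M^*}-f\|_2^2>\tfrac14 r_{n,\rho'_n}(\beta^*)^*]\lesssim (\log n)^{-a}$, a polylogarithmic tail, not the exponential-in-$n$ smallness that \Cref{prop:series_probability} extracts for $\ell>m^*$. After multiplication by $\sqrt{\sum_\ell r_{n,\rho'_n}(\beta_\ell)}$ (which, by \Cref{prop:risk_series}, can be as large as $\varepsilon^{-1}\log^2 n\, {\rho'_n}^{-1/(1+d)}$) this contribution is not dominated by $\sqrt{r_{n,\rho'_n}(\beta)^*}$: the latter scales like $(\log n)^{a/2}\sqrt{r_{n,\rho'_n}(\beta)}$ and can be very small when $\beta$ is large or $\rho'_n$ is small, whereas ${\rho'_n}^{-1/(2(1+d))}(\log n)^{1-a/2}$ need not be. The absorption would therefore require an argument restricted to specific regimes of $(\beta,\rho'_n)$ and cannot be invoked unconditionally. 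To close the gap, either replace the deterministic $\ell=m^*$ bound by the paper's Cauchy--Schwarz expectation bound (which removes $\ell=m^*$ from $\set{A}$ entirely), or prove the slightly weaker statement with $\sum_{\ell\ge m^*}$ and verify that \Cref{prop:series_probability}'s argument still applies at $\ell=m^*$ (it does, since only $\beta_\ell\le\beta$ is used there).

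One further point you correctly flag as an obstacle: the bound $\mathbb{E}\bigl[\sum_\ell\|\hat f_{M(\ell)}-f\|_2^2\bigr]\le\sum_\ell r_{n,\rho'_n}(\beta_\ell)$ is delicate for $\ell<m^*$, because with $\beta_\ell>\beta$ the bias of $\hat f_{M(\ell)}$ is only controlled at order $(M(\ell)+1)^{-2\beta}$, which strictly exceeds $(M(\ell)+1)^{-2\beta_\ell}\asymp r_{n,\rho'_n}(\beta_\ell)$; your instinct to raise this is right, but note the paper's own proof makes the same unexamined claim, so this is a shared issue rather than a deviation from the intended argument.
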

\begin{proof}
    We  observe that the elementary decomposition holds:
    \begin{equation}
      \label{eq:dec_risk}
    \mathbb{E}[\|\hat{f}_{\hat{M}}-f\|_2] = \mathbb{E}[\|\hat{f}_{\hat{M}}-f\|_2 \mathbf{1}_{\hat{m}_n \le m^*}] + \mathbb{E}[\|\hat{f}_{\hat{M}}-f\|_2 \mathbf{1}_{\hat{m}_n > m^*}].
    \end{equation}
    We then consider the two terms separately.

    \noindent \underline{On the event $\hat{m}_n \le m^*$:}
    We apply the triangle inequality and obtain:
    $$
    \mathbb{E}[\|\hat{f}_{\hat{M}}-f\|_2 \mathbf{1}_{\hat{m}_n \le m^*}] \leq 
    \mathbb{E}\left[ \left( \|\hat{f}_{\hat{M}}-\hat{f}_{M^*}\|_2 +\|\hat{f}_{M^*}-f\|_2  \right)\mathbf{1}_{\hat{m}_n \le m^*} \right]
    $$
    Using the definition of $\hat{m}_n$ and $\hat{f}_{\hat{M}}$, we observe that \textit{almost surely}:
    \begin{align*}
    \|\hat{f}_{\hat{M}}-\hat{f}_{M^*}\|_2 \mathbf{1}_{\hat{m}_n \le m^*} & \leq \sqrt{r_{n,\rho'_n}(\beta^*)^*}\\
    & \leq \sqrt{ C (\log n)^a r_{n,\rho'_n}(\beta^*)}   \\
    & \leq \sqrt{ C  (\log n)^a r_{n,\rho'_n}(\beta)  } 
     \Bigg( \exp \left(\left( \frac{\beta}{2\beta+d}-\frac{\beta^*}{2\beta^*+d} \right)\log n \right) \\& \qquad\qquad\qquad\qquad\qquad\qquad\qquad\qquad\qquad\qquad \vee 
    \exp \left(\left( \frac{\beta}{\beta+d}-\frac{\beta^*}{\beta^*+d} \right) \frac{\log (n\sqrt{\rho'_n})}{2} \right) \Bigg)
    \\
    & \leq \sqrt{ C  (\log n)^a r_{n,\rho'_n}(\beta)  } 
    \Bigg( \exp\left( \frac{(\beta-\beta^*)d}{(2\beta+d)(2\beta^*+d)} \log n \right) 
     \\& \qquad\qquad\qquad\qquad\qquad\qquad\qquad\qquad\qquad\qquad \vee 
    \exp \left(  \frac{(\beta-\beta^*)d}{(\beta+d)(\beta^*+d)}   \frac{\log (n\sqrt{\rho'_n})}{2} \right) \Bigg)
    \\
    &\leq \sqrt{ C  (\log n)^a r_{n,\rho'_n}(\beta)  } 
    \Bigg(\exp\left( \frac{\varepsilon}{2(2\beta+ d)}\right) \vee \exp\left( \frac{\varepsilon}{\beta+ d}\right) \Bigg)\\
    & = \sqrt{ C  (\log n)^a r_{n,\rho'_n}(\beta)  } \exp\left( \frac{\varepsilon}{\beta+ d}\right),
   \end{align*}
   where we used above 
   $$
   \frac{(\beta-\beta^*)d}{(\beta+d)(\beta^*+d)}   \frac{\log (n\sqrt{\rho'_n})}{2} \leq \varepsilon \log^{-1} n \frac{d}{(\beta+d)(\beta^*+d)} \left( \log n + \frac{1}{2} \log \rho - \frac{1}{2} \log \log n \right) \leq \frac{\varepsilon}{\beta+d}.
   $$
   Obviously, the same upper bound applies when considering the expectation and we deduce that
   \begin{equation}
       \label{eq:tec_up_1}   
  \mathbb{E} \left[  \|\hat{f}_{\hat{M}}-\hat{f}_{M^*}\|_2 \mathbf{1}_{\hat{m}_n \le m^*} \right] \leq   \sqrt{ C  (\log n)^a r_{n,\rho'_n}(\beta)  } 
  \exp\left( \frac{\varepsilon}{\beta+ d}\right) = 
  \sqrt{ r_{n,\rho'_n}(\beta)^*  }   \exp\left( \frac{\varepsilon}{\beta+ d}\right)
  .
   \end{equation}
   The second term is dealt easily  using the non-adaptive rate of convergence of $\hat{f}_{M^*}$, regardless the value of $m^*$ with respect to $\hat{m}_n$,
   and the Cauchy-Schwarz inequality:
   $$
\mathbb{E}  \left[ \|\hat{f}_{M^*}-f\|_2   \mathbf{1}_{\hat{m}_n \le m^*} \right] \leq 
\sqrt{\mathbb{E}  \left[  \|\hat{f}_{M^*}-f\|_2^2 \right]} \leq \sqrt{r_{n,\rho'_n}(\beta^*)}
   $$
   Using the same arguments as above, we obtain similarly:
     \begin{equation}
       \label{eq:tec_up_2}   
 \mathbb{E}  \left[  \|\hat{f}_{M^*}-f\|_2  \mathbf{1}_{\hat{m}_n \le m^*} \right]
    \leq \sqrt{   r_{n,\rho'_n}(\beta)  } 
  \exp\left( \frac{\varepsilon}{\beta+ d}\right).
  \end{equation}
  We now gather Equations \eqref{eq:tec_up_1} and \eqref{eq:tec_up_2} and obtain that:
  \begin{equation}
      \label{eq:dec_risk_sol_1}
\mathbb{E}\left[\|\hat{f}_{\hat{M}}-f\|_2 \mathbf{1}_{\hat{m}_n \le m^*}\right] \leq 
2 \sqrt{   r_{n,\rho'_n}(\beta)^*  } 
  \exp\left( \frac{\varepsilon}{\beta+ d}\right).
  \end{equation}

    \noindent \underline{On the event $\hat{m}_n > m^*$:}  
We still apply the triangle inequality and observe that for any pair $(M,M')$:
$$
\|\hat{f}_{M}-\hat{f}_{M'}\|_2 \leq \|\hat{f}_M-f\|_2 + \|\hat{f}_{M'}-f\|_2.
$$
Consequently, we have
\begin{align*}
    \left\{ \hat{m}_n > m^*\right\}  
    &= 
    \left\{\exists \ell > m^* \, : \,  \|\hat{f}_{M(\ell)}-\hat{f}_{M^*}\|_2 > \sqrt{r_{n,\rho'_n}(\beta_\ell)^*}\right\} \\
    & \subset 
    \left\{\exists \ell > m^* \, : \,  \|\hat{f}_{M(\ell)}-f\|_2 + \|\hat{f}_{M^*}-f\|_2 > \sqrt{r_{n,\rho'_n}(\beta_\ell)^*}\right\} \\
    & \subset 
    \left\{\exists \ell > m^* \, : \,  \|\hat{f}_{M(\ell)}-f\|_2 > \frac{1}{2} \sqrt{r_{n,\rho'_n}(\beta_\ell)^*} \right\}
    \cup \left\{ \exists \ell > m^* \, : \, 
    \|\hat{f}_{M^*}-f\|_2 > \frac{1}{2} \sqrt{r_{n,\rho'_n}(\beta_\ell)^*}\right\} \\
    & \subset 
    \left\{\exists \ell > m^* \, : \,  \|\hat{f}_{M(\ell)}-f\|_2 > \frac{1}{2} \sqrt{r_{n,\rho'_n}(\beta_\ell)^*} \right\}
    \cup \left\{ 
    \|\hat{f}_{M^*}-f\|_2 > \frac{1}{2} \sqrt{r_{n,\rho'_n}(\beta_{m^*})^*}\right\},
\end{align*}
where the last inequality comes from the monotonicity (decreasing function) of $\beta \longmapsto r_{n,\rho'_n}(\beta)$. We then deduce with a union bound that:
\begin{align}
    \label{eq:majo_proba}
    \mathbb{E}\left[ \mathbf{1}_{\left\{ \hat{m}_n > m^*\right\}} \right] 
    \leq \sum_{\ell > m^*} \mathbb{P}\left[ \|\hat{f}_{M(\ell)}-f\|_2^2 >  \frac{1}{4} r_{n,\rho'_n}(\beta_\ell)^* \right]
\end{align}
We then use the Cauchy-Schwarz inequality and \eqref{eq:majo_proba} to obtain:
\begin{align*}
    \mathbb{E}[\|\hat{f}_{\hat{M}}-f\|_2 \mathbf{1}_{\hat{m}_n > m^*}]& \leq 
    \sqrt{\mathbb{E}[\|\hat{f}_{\hat{M}}-f\|_2^2 ]} \sqrt{\mathbb{E}[\mathbf{1}_{\hat{m}_n > m^*}]} \\
    & \leq  \sqrt{\mathbb{E}\left[\sum_{\ell=0}^{k_n} \|\hat{f}_{M(\ell)}-f\|_2^2 \right]} \sqrt{
    \sum_{\ell > m^*} \mathbb{P}\left[ \|\hat{f}_{M(\ell)}-f\|_2^2 >   \frac{1}{4} r_{n,\rho'_n}(\beta_\ell)^* \right]} \\
    & \leq  \sqrt{
    \sum_{\ell=0}^{k_n} r_{n,\rho'_n}(\beta_\ell)}  \sqrt{
    \sum_{\ell > m^*} \mathbb{P}\left[ \|\hat{f}_{M(\ell)}-f\|_2^2 > \frac{1}{4} r_{n,\rho'_n}(\beta_\ell)^* \right]}
\end{align*}

\end{proof}

From Proposition  \ref{prop:decomposition_adaptive}, we observe that the upper bound of the risk of our adaptive procedure depends on two terms. The first one involves the risk $r_{n,\rho}(\beta)$, up to some multiplicative $\log n$ term, while the second term will be shown to be negligible with respect to the first one as soon as $a$ and $C$ are suitably chosen (see Definition \eqref{def:penalty_risk}).

The next proposition is purely technical and does not involve any statistical insight.
\begin{proposition}
    \label{prop:risk_series}
Assume that $\varepsilon \leq 1/2$, then for any $\rho>0, n \ge 1$ and $d \ge 1$:
$$\sum_{\ell=0}^{k_n} r_{n,\rho'_n}(\beta_\ell) \leq 
4 (2+d) \varepsilon^{-1}  \log n^2   \left({\rho'_n}^{-\frac{ 1 }{1  + d}}+2\right).
$$    
\end{proposition}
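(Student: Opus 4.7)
The strategy is to decompose $r_{n,\rho_n'}(\beta)$ into its two constituent terms via $\max(A,B)\le A+B$, namely
$$r_{n,\rho_n'}(\beta)\le A(\beta)+B(\beta),\quad A(\beta)\eqdef n^{-\frac{2\beta}{2\beta+d}},\ B(\beta)\eqdef (n\sqrt{\rho_n'})^{-\frac{2\beta}{\beta+d}},$$
which cleanly separates the sampling-variance contribution from the privacy-variance contribution. I then bound the resulting two sums separately and re-combine.

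For the sampling sum, since $n\ge 1$ and $\frac{2\beta_\ell}{2\beta_\ell+d}\ge 0$, each term satisfies $A(\beta_\ell)\le 1$. Thus
$$\sum_{\ell=0}^{k_n}A(\beta_\ell)\le k_n+1\le \varepsilon^{-1}\log^2 n+1\le 2\,\varepsilon^{-1}\log^2 n,$$
which is absorbed into the additive ``$+2$'' in the statement after factoring out the common $\varepsilon^{-1}\log^2 n$.

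For the privacy sum, I first factor $B(\beta)=n^{-2\beta/(\beta+d)}\rho_n'^{-\beta/(\beta+d)}$ and drop $n^{-2\beta/(\beta+d)}\le 1$ to reduce to $\sum_\ell \rho_n'^{-\beta_\ell/(\beta_\ell+d)}$. The plan is to split the grid at $\beta=1$. For $\beta_\ell\le 1$, the monotonicity and concavity of $\beta\mapsto \beta/(\beta+d)$ together with $\rho_n'\le 1$ yield the pointwise bound $\rho_n'^{-\beta_\ell/(\beta_\ell+d)}\le \rho_n'^{-1/(1+d)}$, so the contribution from this range is at most $(k_n+1)\rho_n'^{-1/(1+d)}\le 2\varepsilon^{-1}\log^2 n\cdot \rho_n'^{-1/(1+d)}$. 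For $\beta_\ell>1$, I bound the sum of the monotone (in $\beta$) sequence by an integral comparison, $\sum_{\ell:\beta_\ell>1}\le \frac{\log n}{\varepsilon}\int_1^{\beta_0}\rho_n'^{-\beta/(\beta+d)}\,d\beta+\text{boundary}$, and then evaluate the integral via the substitution $u=\beta/(\beta+d)$ (so that $d\beta=d(1-u)^{-2}du$). This turns the integral into $\int_{1/(1+d)}^{u_0}e^{u|\log\rho_n'|}\cdot d(1-u)^{-2}du$, which I control by an integration-by-parts argument of the type $\int_a^{\infty}e^{-t}/t^2\,dt\le e^{-a}/a$, producing a bound of the form $C(d)\cdot \rho_n'^{-1/(1+d)}$ up to multiplicative $\log n$ and $d$ factors.

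Finally, assembling the sampling sum $\le 2\varepsilon^{-1}\log^2 n$ and the privacy sum $\le C(d)\varepsilon^{-1}\log^2 n\cdot \rho_n'^{-1/(1+d)}$, both contributions share the prefactor $\varepsilon^{-1}\log^2 n$, and pulling it out with loose constants gives exactly $4(2+d)\varepsilon^{-1}\log^2 n\,(\rho_n'^{-1/(1+d)}+2)$. The main obstacle is the integral step in the $\beta_\ell>1$ range: the pointwise bound $\rho_n'^{-1/(1+d)}$ is violated there, so one must exploit the near-exponential decay of $\rho_n'^{-\beta/(\beta+d)}$ when moving away from the ``balance'' value $\beta\approx 1$, together with the finite number $k_n=O(\varepsilon^{-1}\log^2 n)$ of grid points, to recover the desired $\rho_n'^{-1/(1+d)}$ scaling after summing.
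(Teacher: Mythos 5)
There is a genuine gap in your handling of the $\beta_\ell>1$ range, and it stems from dropping $n^{-2\beta/(\beta+d)}\le 1$. Once that factor is discarded, the quantity you are summing, $\rho_n'^{-\beta_\ell/(\beta_\ell+d)}$, is \emph{increasing} in $\beta_\ell$ (since $\beta\mapsto\beta/(\beta+d)$ is increasing and $\rho'_n<1$): it rises from $\rho_n'^{-1/(1+d)}$ at $\beta=1$ toward $\rho_n'^{-1}$ as $\beta\to\infty$. There is no ``near-exponential decay'' to exploit, and after the substitution $u=\beta/(\beta+d)$ the integrand $e^{u|\log\rho'_n|}(1-u)^{-2}$ is increasing, so the bound $\int_a^\infty e^{-t}/t^2\,dt\le e^{-a}/a$ has no applicable analogue. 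Concretely, the largest grid point is $\beta_0=k_n\varepsilon/\log n\approx\log n$, at which $\rho_n'^{-\beta_0/(\beta_0+d)}\approx\rho_n'^{-1+d/\log n}$; for $d\ge 1$ this exceeds $\rho_n'^{-1/(1+d)}$ by a factor that no power of $\log n$ can absorb when $\rho'_n$ is small (e.g.\ $\rho'_n=e^{-(\log n)^2}$), so your privacy sum over $\beta_\ell>1$ cannot be forced into the stated form.

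The paper never drops the $n$-factor. For $\beta_\ell\ge 1$ (i.e.\ $\ell\ge\varepsilon^{-1}\log n$) it keeps the full privacy term $(n\sqrt{\rho'_n})^{-2\beta_\ell/(\beta_\ell+d)}$ and observes that the exponent satisfies $2\beta_\ell/(\beta_\ell+d)\ge 2/(1+d)$, so with $n\sqrt{\rho'_n}\ge 1$ each such term is at most $(n\sqrt{\rho'_n})^{-2/(1+d)}\le\rho_n'^{-1/(1+d)}$; multiplying by $k_n\le\varepsilon^{-1}\log^2 n$ closes that range. It is precisely the $n^{-2\beta_\ell/(\beta_\ell+d)}$ factor that makes the term monotone in the useful direction. (For $\beta_\ell<1$ the paper actually sums an explicit geometric series, obtaining $4(2+d)\varepsilon^{-1}\rho_n'^{-1/(1+d)}$ without a $\log^2 n$ factor — sharper than your crude count, but your version is acceptable.) To repair your proof, retain $n^{-2\beta_\ell/(\beta_\ell+d)}$ in the $\beta_\ell>1$ regime and replace the integral comparison with the uniform per-term bound just described.
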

\begin{proof}
We observe from our definition of $r_{n,\rho'_n}(\beta)$ that:
    \begin{align*}
        \sum_{\ell=0}^{k_n} r_{n,\rho'_n}(\beta_\ell)& = \sum_{\ell=0}^{k_n} 
 \left(
        n^{-\frac{2 \beta_\ell}{2 \beta_\ell  + d}} + (n \sqrt{\rho'_n})^{- \frac{2 \beta_\ell}{\beta_\ell+d}})
        \right) \\
        & = \sum_{\ell=0}^{  \lfloor \varepsilon^{-1} \log^2 n \rfloor  } n^{-\frac{ \ell \varepsilon/\log n }{ \ell \varepsilon/\log n   + d/2}} + (n \sqrt{\rho'_n})
        ^{-\frac{ \ell \varepsilon/\log n }{ \ell \varepsilon/2\log n   + d/2}} \\
        & = \sum_{\ell \varepsilon < \log n} n^{-\frac{ \ell \varepsilon/\log n }{ \ell \varepsilon/\log n   + d/2}} + (n \sqrt{\rho'_n})
        ^{-\frac{ \ell \varepsilon/\log n }{ \ell \varepsilon/2\log n   + d/2}} +  \sum_{\ell \ge  \lfloor  \varepsilon^{-1}\log n \rfloor }^{\lfloor \varepsilon^{-1} \log^2 n \rfloor} n^{-\frac{ \ell \varepsilon/\log n }{ \ell \varepsilon/\log n   + d/2}} + (n \sqrt{\rho'_n})
        ^{-\frac{ \ell \varepsilon/\log n }{ \ell \varepsilon/2\log n   + d/2}}
            \end{align*}
We focus on the first sum and observe that when $\ell \varepsilon < \log n$:
$$
n^{-\frac{ \ell \varepsilon/\log n }{ \ell \varepsilon/\log n   + d/2}} = e^{-\frac{ \ell \varepsilon/\log n }{ \ell \varepsilon/\log n   + d/2} \log n} =  e^{-\frac{ \ell \varepsilon }{ \ell \varepsilon/\log n   + d/2} } \leq
 e^{-\frac{ \ell \varepsilon }{ 1   + d/2} },
$$
and similarly:
$$
 (n \sqrt{\rho'_n})
        ^{-\frac{ \ell \varepsilon/\log n }{ \ell \varepsilon/2\log n   + d/2}} = 
        e^{-\frac{ \ell \varepsilon/\log n }{ \ell \varepsilon/2\log n   + d/2} \log n} 
        {\rho'_n}^{-\frac{ \ell \varepsilon/\log n }{ \ell \varepsilon/\log n   + d}} \leq 
        e^{-\frac{ 2 \ell \varepsilon }{ 1   + d }} {\rho'_n}^{-\frac{ 1 }{1  + d}} .
        $$
Hence, using a geometric series, we get:
\begin{align}    
    \sum_{\ell \varepsilon < \log n} n^{-\frac{ \ell \varepsilon/\log n }{ \ell \varepsilon/\log n   + d/2}} + (n \sqrt{\rho'_n})
        ^{-\frac{ \ell \varepsilon/\log n }{ \ell \varepsilon/2\log n   + d/2}} &\leq \sum_{\ell=0}^{+\infty}
          e^{-\frac{ \ell \varepsilon }{ 1   + d/2} }+e^{-\frac{ 2 \ell \varepsilon }{ 1   + d }} {\rho'_n}^{-\frac{ 1 }{1  + d}}  \nonumber \\
          &= \frac{1}{1-e^{-\frac{ \varepsilon }{ 1   + d/2} }} + \frac{{\rho'_n}^{-\frac{ 1 }{1  + d}}}{1-e^{-\frac{ 2 \varepsilon }{ 1   + d} }} \nonumber\\
          & \leq 4 (2+d) \varepsilon^{-1} {\rho'_n}^{-\frac{ 1 }{1  + d}}.
          \label{eq:term_1}
\end{align}
where the last line comes from the bound $e^{-t} \leq 1-t/2$ when $t \in [0,1/2)$.

Concerning now the second sum, when $\ell \ge  \varepsilon^{-1}\log n$, we verify that:
$$
\ell \ge  \varepsilon^{-1}\log n \Longrightarrow \frac{ \ell \varepsilon/\log n }{ \ell \varepsilon/\log n   + d/2}>\frac{2}{2+d} \quad \text{and} \quad \frac{ \ell \varepsilon/\log n }{ \ell \varepsilon/2\log n   + d/2} > \frac{2}{1+d},
$$
which in turn implies that
\begin{equation}
    \label{eq:term_2}
 \sum_{\ell \ge  \varepsilon^{-1}\log n}^{k_n} n^{-\frac{ \ell \varepsilon/\log n }{ \ell \varepsilon/\log n   + d/2}} + (n \sqrt{\rho'_n})
        ^{-\frac{ \ell \varepsilon/\log n }{ \ell \varepsilon/2\log n   + d/2}} <  
        \varepsilon^{-1} \log^2 n \left(n^{-\frac{2}{2+d} }  + (n \sqrt{\rho'_n})^{-\frac{2}{1+d} }\right)
\end{equation}
 Gathering Equations \eqref{eq:term_1} and \eqref{eq:term_2} yields the bound independent from $n$ and $d$ as soon as $\varepsilon< 1/2$:
$$
\sum_{\ell=0}^{k_n} r_{n,\rho'_n}(\beta_\ell) \leq 
4 (2+d) \varepsilon^{-1}  \log n^2   \left({\rho'_n}^{-\frac{ 1 }{1  + d}}+2\right)
$$
\end{proof}

We finally upper bound the second term of \eqref{eq:adaptive_inequality} that involves $\mathbb{P}\left[ \|\hat{f}_{M(\ell)}-f\|_2^2 >  \frac{1}{4} r_{n,\rho}(\beta_\ell)^*\right]$, to be studied when $\ell>m^*$. We obtain the next result.

\begin{proposition}
    \label{prop:series_probability}
    Assume that $C>8L^2 \vee 2^{2d+10}$, that $a\ge 1$ and $n \ge 3$, then 
    $$
    \sqrt{\sum_{l > m^*} \mathbb{P}\left[ \|\hat{f}_{M(\ell)}-f\|_2^2 >  \frac{1}{4} r_{n,\rho'_n}(\beta_\ell)^*\right]} \leq \sqrt{2   \varepsilon^{-1} } \,\log n \, n^{-2}.$$
\end{proposition}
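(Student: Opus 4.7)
The plan is to prove, for each index $\ell > m^*$, the pointwise bound $\mathbb{P}[\|\hat f_{M(\ell)}-f\|_2^2 > \frac{1}{4} r_{n,\rho'_n}(\beta_\ell)^*] \leq 2 n^{-4}$. Since there are at most $k_n \leq \varepsilon^{-1}(\log n)^2$ such indices, summing and then taking a square root will produce the announced $\sqrt{2\varepsilon^{-1}}\log n \cdot n^{-2}$.

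To obtain the per-$\ell$ bound, I first split the error via Parseval and $(u+v)^2 \leq 2u^2 + 2v^2$:
\[
\|\hat f_{M(\ell)}-f\|_2^2 \leq \|f-f_{M(\ell)}\|_2^2 + 2A_\ell + 2B_\ell,
\]
where $A_\ell \eqdef \|\tilde f_{M(\ell)}-f_{M(\ell)}\|_2^2$ collects the sampling noise and $B_\ell \eqdef \sigma_{M(\ell)}^2 \sum_{\|k\|_\infty \leq M(\ell)} |\xi_k|^2$ collects the privacy noise. For $\ell > m^*$ we have $\beta_\ell < \beta$, so \Cref{lemmaBiasUBSobolev} combined with $(M(\ell)+1)^{-2\beta} \leq (M(\ell)+1)^{-2\beta_\ell} \leq r_{n,\rho'_n}(\beta_\ell)$ (the latter by the very definition of $M(\ell) = M_{n,\rho'_n}(\beta_\ell)$) gives $\|f-f_{M(\ell)}\|_2^2 \leq L^2 r_{n,\rho'_n}(\beta_\ell)$. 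With $C \geq 8L^2$ and $a \geq 1$, this bias is absorbed into $\frac{1}{16} r_{n,\rho'_n}(\beta_\ell)^*$, reducing the problem to bounding the tails of $A_\ell$ and $B_\ell$ each by $n^{-4}$.

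The privacy piece $B_\ell$ is a $\chi^2_{2K_\ell}$ variable (with $K_\ell = (2M(\ell)+1)^d$) scaled by $\sigma_{M(\ell)}^2 = 4K_\ell/(n^2\rho'_n)$, so Laurent--Massart concentration with deviation parameter $t = 4\log n$ produces, off an event of probability $\leq n^{-4}$, the bound $B_\ell \leq c \cdot 2^{2d} r_{n,\rho'_n}(\beta_\ell) \log n$ for an absolute $c$; the factor $2^{2d}$ comes from the key identity $K_\ell^2/(n^2\rho'_n) \leq 2^{2d} r_{n,\rho'_n}(\beta_\ell)$, itself a consequence of $2M(\ell)+1 \leq 2(n\sqrt{\rho'_n})^{1/(\beta_\ell+d)}$. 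For the sampling piece $A_\ell$, I apply McDiarmid's inequality to $g(X_1, \dots, X_n) = \|\tilde f_{M(\ell)} - f_{M(\ell)}\|_2$, whose single-coordinate sensitivity is at most $2\sqrt{K_\ell}/n$ (the same bound driving \Cref{theoremPrivacyProjectionEstimator}); combining $\mathbb{E} g \leq \sqrt{K_\ell/n}$ with a deviation $\sqrt{8K_\ell \log n/n}$ calibrated to give $n^{-4}$ yields $A_\ell \leq c' \cdot 2^d r_{n,\rho'_n}(\beta_\ell) \log n$ off an event of probability $\leq n^{-4}$. Both deviation bounds sit under $\frac{C}{32}(\log n)^a r_{n,\rho'_n}(\beta_\ell)$ as soon as $C$ dominates the collected numerical constants times $2^{2d}$, which is exactly what the hypothesis $C > 2^{2d+10}$ enforces; the assumption $a \geq 1$ is what converts the single $\log n$ from each concentration inequality into $(\log n)^a$. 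A final union bound delivers $\mathbb{P}[\cdot] \leq 2 n^{-4}$ per index. The only technically delicate point is the numerical bookkeeping of the $2^d$ and $2^{2d}$ factors --- arising from $(2M+1)^d \leq 2^d(M+1)^d$ in the sampling and privacy variances respectively --- to make sure the final constant genuinely fits under $2^{2d+10}$; conceptually, nothing more than Laurent--Massart and McDiarmid is used.
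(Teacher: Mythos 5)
Your overall architecture matches the paper's: bound each term $\mathbb{P}[\cdot]$ by roughly $2n^{-4}$, multiply by $k_n \le \varepsilon^{-1}\log^2 n$, take a square root, with the same decomposition into bias, sampling noise $A_\ell$, and privacy noise $B_\ell$, the same absorption of the bias using $\beta_\ell < \beta$, and the same Laurent--Massart $\chi^2$ bound for $B_\ell$. Where you genuinely diverge is the treatment of the sampling term $A_\ell = \|\tilde f_{M(\ell)} - f_{M(\ell)}\|_2^2$. The paper bounds $A_\ell$ by a union bound over the $(2M(\ell)+1)^d$ Fourier coefficients, applying Hoeffding's inequality to each $\tilde\theta_k - \theta_k$ separately and then paying the cardinality $(2M(\ell)+1)^d$ up front; the resulting prefactor is then killed by $(2M(\ell)+1)^d \le 2^d n$ and the largeness of $C(\log n)^a$. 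You instead apply McDiarmid to the scalar map $g = \|\tilde f_{M(\ell)} - f_{M(\ell)}\|_2$, whose per-coordinate oscillation $2\sqrt{K_\ell}/n$ follows from Parseval exactly as in the privacy analysis, and control $\mathbb{E} g$ by Jensen. Both routes are standard and yield comparable constants: your approach avoids the cardinality prefactor at the cost of a $\log n$ entering the deviation threshold (which is harmless since $r^*$ carries $(\log n)^a$ with $a\ge 1$), whereas the paper's approach keeps the deviation threshold clean but must pay the union bound. The bookkeeping you flag --- tracking the $2^d$ and $2^{2d}$ factors coming from $(2M+1)^d \le 2^d(M+1)^d$ and its square --- is indeed where the hypothesis on $C$ earns its keep in both proofs; a more careful accounting actually gives $3^d$ rather than $2^d$ when passing from $2M+1$ to $M$, but $3^d < 2^{2d}$ so $C > 2^{2d+10}$ still covers it. Your proof is correct.
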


\begin{proof}
    
We first consider any integer $\ell>m^*$ and our starting point is  the Parseval equality: we decompose the loss between $\hat{f}_{M(\ell)}$ and $f$ as follows:
\begin{align*}
    \|\hat{f}_{M(\ell)}-f\|_2^2 & =\|\hat{f}_{M(\ell)}-f_{M(\ell)}\|_2^2 + \|f_{M(\ell)}-f\|_2^2 \\
    & \leq 2\left( \sum_{k \in \{-M(\ell),...,M(\ell)\}^d\}}|\theta_k-\tilde{\theta}_k|^2 
    + \sigma_{M(\ell)}^2 \sum_{k \in \{-M(\ell),...,M(\ell)\}^d\}} |\xi_k|^2\right) + \frac{L^2}{(2\pi)^{2 \beta}} (M(\ell)+1)^{-2\beta},
\end{align*}
where in the last line we used the tail upper bound of the Fourier series on Sobolev spaces stated in Lemma \ref{lemmaBiasUBSobolev}.

We observe with our alleviated notations, we obtain that:
$$
\frac{1}{4}r_{n,\rho'_n}(\beta_\ell)^* = \frac{C}{4} (\log n)^a M_{n,\rho'_n}(\beta_\ell)^{-2\beta_\ell} = \frac{C}{4} (\log n)^{a} M(\ell)^{-2\beta_\ell} > \frac{C}{4} (\log n)^{a} (M(\ell)+1)^{-2\beta_\ell}.
$$
Hence, when $\ell>m^*$, we get $\beta_\ell<\beta_{m^*}<\beta$, which implies 
$(M(\ell)+1)^{-2\beta_\ell} > (M(\ell)+1)^{-2\beta}$.
Therefore, as soon as $\frac{C}{4}>2 L^2$, we have:
$$
\frac{L^2}{(2\pi)^{2 \beta}} (M(\ell)+1)^{-2\beta} < \frac{1}{8}r_{n,\rho'_n}(\beta_\ell)^*.
$$
For a such choice of $C$, we then obtain that for any $a>0$ and any $n \ge 3$:
\begin{align*}
&\left\{\|\hat{f}_{M(\ell)}-f\|_2^2 > \frac{1}{4}r_{n,\rho'_n}(\beta_\ell)^* \right\} \\&\quad\quad \subset 
\left\{   \sum_{k \in \{-M(\ell),...,M(\ell)\}^d\}}|\theta_k-\tilde{\theta}_k|^2 
    + \sigma_{M(\ell)}^2 \sum_{k \in \{-M(\ell),...,M(\ell)\}^d\}} |\xi_k|^2  >  \frac{1}{16}r_{n,\rho'_n}(\beta_\ell)^*\right\}    \\
    &\quad\quad \subset 
\underbrace{ \left\{   \sum_{k \in \{-M(\ell),...,M(\ell)\}^d\}}|\theta_k-\tilde{\theta}_k|^2  > 
     \frac{1}{32}r_{n,\rho'_n}(\beta_\ell)^*\right\}}_{:=E_1} \\
      &\quad\quad \qquad \cup 
     \underbrace{\left\{  \sigma_{M(\ell)}^2 \sum_{k \in \{-M(\ell),...,M(\ell)\}^d\}} |\xi_k|^2  > 
     \frac{1}{32}r_{n,\rho'_n}(\beta_\ell)^*\right\} }_{:=E_2}.
\end{align*}
We now consider $E_1$ and $E_2$ separately.

\underline{Study of $E_1$: concentration of the sequence $(\tilde{\theta}_k)_{k \in \mathbb{Z}^d}$.}
We use a simple union bound:
$$
E_1 \subset \bigcup_{k \in \{-M(\ell),...,M(\ell)\}^d}
\left\{ |\theta_k-\tilde{\theta}_k|^2 \ge  \frac{r_{n,\rho'_n}(\beta_\ell)^*}{32 (2M(\ell)+1)^d} \right\}.
$$
The Hoeffding inequality applied to the (complex) bounded sequence $(\tilde{\theta}_k)_{k\in \mathbb{Z}^d}$ yields
$$
\forall t >0 \qquad \mathbb{P} (|\theta_k-\tilde{\theta}_k|^2 \ge t) \leq 4 e^{-n t^2/4}.
$$
 
Applying this previous inequality in the union bound above leads to
\begin{align*}
    \mathbb{P}(E_1) & \leq 4 (2M(\ell)+1)^d e^{- n \frac{r_{n,\rho'_n}(\beta_\ell)^*}{128 (2M(\ell)+1)^d} } \\
    & = 
    4 (2M(\ell)+1)^d e^{-n \frac{C (\log n)^a M(\ell)^{-2 \beta_{\ell}}}{128 (2M(\ell)+1)^d}} \\
    & \leq  4 (2M(\ell)+1)^d e^{-n \frac{C (\log n)^a M(\ell)^{-(2 \beta_{\ell}+d)}}{2^d 128}}.
\end{align*}
Using Equation \eqref{def:cut_off}, we observe that $n M(\ell)^{2 \beta_\ell+d} \ge 1$, which entails:
$$
\mathbb{P}(E_1) \leq 4 (2M(\ell)+1)^d e^{- \frac{C (\log n)^a}{2^d 128 }}.
$$
Then, using that $a>1$ and remarking from Equation \eqref{def:cut_off} that $M(\ell)^d \leq n$, we deduce thanks to our choice of $C$ that:
\begin{equation}
    \label{eq:upper_bound_E1}
    \mathbb{P}(E_1) \leq 2^{d+2}  n^{1-\frac{C}{2^{d+6}}} \leq 2^{d+2} n^{-2^d-4} \leq n^{-4}.
\end{equation}

\underline{Study of $E_2$: concentration of the $\chi^2$ noise of privacy.} From the definition of $(\xi_k)_{k \in \mathbb{Z}^d}$ as a complex Gaussian random variable, we now that 
$$\sum_{k \in \{-M(\ell),...,M(\ell)\}^d\}} |\xi_k|^2  \sim \chi^2(2(2M(\ell)+1)^d),$$
and centering the chi square distribution yields:
\begin{align*}
\mathbb{P}(E_2)   &= \mathbb{P}\left(\sigma_{M(\ell)}^2 \chi^2(2(2M(\ell)+1)^d) > \frac{r_{n,\rho'_n}(\beta_\ell)^*}{32}\right)\\
&= 
\mathbb{P}\left( \chi^2(2(2M(\ell)+1)^d) - 2(2M(\ell)+1)^d) > \frac{r_{n,\rho'_n}(\beta_\ell)^*}{32 \sigma_{M(\ell)}^2}
- 2(2M(\ell)+1)^d)\right).
\end{align*}
Using that the variance factor needs to be tuned as  $\sigma_{M(\ell)}=\frac{2 \sqrt{(2M(\ell)+1)^d}}{n\sqrt{\rho'_n}}$ to ensure a $\rho-zCDP$ and the value of $r_{n,\rho'_n}(\beta_\ell)^*$ stated in \eqref{def:penalty_risk}, we can expand the right hand side of the last inequality as:
\begin{align*}
    \frac{r_{n,\rho'_n}(\beta_\ell)^*}{32 \sigma_{M(\ell)}^2}
- 2(2M(\ell)+1)^d) &=2 (2M(\ell)+1)^d) \left( \frac{C (\log n)^a r_{n,\rho'_n}(\beta_\ell) n^2 \rho'_n}{256 (2M(\ell)+1)^{2d}) } -1\right) \\
&= 2 (2M(\ell)+1)^d) \left( \frac{C }{4^d  256 } (\log n)^a M(\ell)^{-2(\beta+d)} n^2 \rho'_n -1\right) \\
&\ge  2 (2M(\ell)+1)^d) \left( \frac{C }{4^d  256 } (\log n)^a  -1\right),
\end{align*}
where the last line comes from the definition of $M(\ell)$ that guarantees 
$$
M(\ell)^{2 (\beta_{\ell}+d)} \leq n^2 \rho'_n.
$$
We may choose $C \ge 4^d 512$, define $D=2(2M(\ell)+1)^d$ and we observe that the probability of $E_2$ is upper bounded by:
$$
\mathbb{P}(E_2) \leq \mathbb{P}\left(\chi^2(D)-D \ge  \frac{C}{2} D (\log n)^a\right).
$$
We now use the $\chi^2$ concentration upper bound stated in Equation \eqref{factChiSquaredConcentration} with $\sigma=1$ and $\delta = \frac{C}{2} (\log n)^a$ and obtain that:
\begin{equation}\label{eq:upper_bound_E2}
\mathbb{P}(E_2) \leq e^{-D \frac{C^2 (\log n)^{2a}}{16} } \vee e^{-D \frac{C  (\log n)^a}{4}} \leq 
 e^{- \frac{C  (\log n)^a}{2}} \leq n^{-C/2} \leq n^{-5},
\end{equation}
according to $a \ge 1$, $D \ge 2$ and our choice of $C$ in the statement of the proposition.
\end{proof}

\subsection{Proof of \Cref{theoremAdaptiveSelectionUtility}}
\label{proofOfTheoremAdaptiveSelectionUtility}

First, we can notice that the claim about the privacy of the whole estimation procedure is a direct consequence of \Cref{factCompositionConcentratedDifferentialPrivacy}. The rest of this proof only focuses on the utility claim.

Let us note $\rho' =  \rho / |\mathcal{M}|$
We start by writing $\Lambda^{(1)}(\cdot)$ as a sum of two terms: a sampling one  and a privacy one:
\begin{equation}
     \Lambda^{(1)}(M) \eqdef \Lambda^{(1)}_{\text{samp}}(M) + \Lambda^{(1)}_{\text{priv}}(M) \quad \forall M \in \mathcal{M} \;,
\end{equation}
and $\Lambda^{(2)}(\cdot)$ as the sum of $\Lambda^{(1)}(\cdot)$ and of a privacy term
\begin{equation}
     \Lambda^{(2)}(M) \eqdef \Lambda^{(1)}(M) + \Delta_{\text{priv}}(M) \quad \forall M \in \mathcal{M} \;.
\end{equation}

The values of $\Lambda^{(1)}_{\text{samp}}(\cdot)$, $\Lambda^{(1)}_{\text{priv}}(\cdot)$ and $\Delta_{\text{priv}}(\cdot)$ will be fixed later in the proof.

Then, for any $M$, 
\begin{equation}
\begin{aligned}
    \|\hat{f}_{\hat{M}} - f \|^2
    &\leq 
    3 \p{ \|\hat{f}_{\hat{M}} - \proj_{S_{M}}(\hat{f}_{\hat{M}}) \|^2 + \| \proj_{S_{M}}(\hat{f}_{\hat{M}})  - \hat{f}_{{M}}\|^2 + \|\hat{f}_{{M}} - f\|^2} \\
    &\leq 
    6 \Bigg( \|\hat{f}_{\hat{M}} - \proj_{S_{\hat{M}}}(\hat{f}_{{M}}) \|^2 + \| \proj_{S_{{M}}}(\hat{f}_{\hat{M}})  - \hat{f}_{{M}}\|^2 + \| \proj_{S_{M}}(\hat{f}_{\hat{M}})  - \proj_{S_{\hat{M}}}(\hat{f}_{{M}})\|^2 \\ 
    &\qquad\qquad\qquad\qquad\qquad\qquad\qquad\qquad\qquad\qquad\qquad\qquad\qquad\qquad\qquad\qquad+ \|\hat{f}_{{M}} - f\|^2 \Bigg)  \;.
\end{aligned}
\end{equation}
Because of the definition of $B^2(\cdot)$, we may write that
\begin{equation}
\begin{aligned}
    \|\hat{f}_{\hat{M}} - f \|^2
    &\leq 
    6 \Bigg( B^2(M) +  \Lambda^{(1)}(\hat{M}) + B^2(\hat{M}) +  \Lambda^{(1)}(M) + \| \proj_{S_{M}}(\hat{f}_{\hat{M}})  - \proj_{S_{\hat{M}}}(\hat{f}_{{M}})\|^2 + \|\hat{f}_{{M}} - f\|^2 \Bigg) \;, 
\end{aligned}
\end{equation}
which gives, because of the relation linking $\Lambda^{(1)}(\cdot)$ and $\Lambda^{(2)}(\cdot)$,
\begin{equation}
\begin{aligned}
    \|\hat{f}_{\hat{M}} - f \|^2
    &\leq 
    6 \Bigg( B^2(M) +  \Lambda^{(2)}(\hat{M}) + B^2(\hat{M}) +  \Lambda^{(2)}({M}) \\ &\quad\quad\quad\quad+ \p{\| \proj_{S_{M}}(\hat{f}_{\hat{M}})  - \proj_{S_{\hat{M}}}(\hat{f}_{{M}})\|^2 - (\Delta_{\text{priv}}(M) + \Delta_{\text{priv}}(\hat{M}))} + \|\hat{f}_{{M}} - f\|^2\Bigg) \;.
\end{aligned}
\end{equation}
Finally, because of the selection rule of $\hat{M}$,
\begin{equation}
\begin{aligned}
    \|\hat{f}_{\hat{M}} - f \|^2
    &\leq 
    6 \Bigg( 2 (B^2(M) + \Lambda^{(2)}({M})) \\ &\quad\quad\quad\quad+ 
    \underbrace{\p{\| \proj_{S_{M}}(\hat{f}_{\hat{M}})  - \proj_{S_{\hat{M}}}(\hat{f}_{{M}})\|^2 - (\Delta_{\text{priv}}(M) + \Delta_{\text{priv}}(\hat{M}))}_+}_{\text{\red{Extra term 1}}} + 
    \|\hat{f}_{{M}} - f\|^2\Bigg) \;.
\end{aligned}
\end{equation}
We recall that this holds for any $M \in \mathcal{M}$.
Furthermore, in order to have control on $B^2(\cdot)$, we may write that for any model $\mathcal{M}' \in \mathcal{M}$,
\begin{equation}
    \begin{aligned}
            \|\proj_{S_{M'}}&(\hat{f}_M) - \hat{f}_{M'} \|^2 - \Lambda^{(1)}(M') \\
        &\leq  2 \p{\|\proj_{S_{M'}}(\tilde{f}_M) - \tilde{f}_{M'} \|^2 + \| \proj_{S_{M'}}( (\hat{f}_M - \tilde{f}_M)) -  ( \hat{f}_{M'} - \tilde{f}_{M'})) \|^2} - \Lambda^{(1)}(M') \\
        &\leq  6 \bigg(\|\tilde{f}_{M'} - {f}_{M'} \|^2 + \|\proj_{S_{M'}}(\tilde{f}_M) - {f}_{M \wedge M'} \|^2  + \|{f}_{M'} - {f}_{M \wedge M'} \|^2 \\ 
        &\quad\quad\quad\quad\quad\quad\quad\quad+ \| \proj_{S_{M'}}( (\hat{f}_M - \tilde{f}_M)) -  ( \hat{f}_{M'} - \tilde{f}_{M'})) \|^2 \bigg) - \Lambda^{(1)}(M')  \;.
    \end{aligned}
\end{equation}
Then using that $\proj_{S_{M'}}(\tilde{f}_M) = \tilde{f}_{M \wedge M'}$ and that $\|{f}_{M'} - {f}_{M \wedge M'} \|^2 \leq \|f - {f}_{M}\|^2$ (which is easily seen using the Parseval formula),
\begin{equation}
    \begin{aligned}
            \|\proj_{S_{M'}}&(\hat{f}_M) - \hat{f}_{M'} \|^2 - \Lambda^{(1)}(M') \\
        &\leq  6 \bigg(\|\tilde{f}_{M'} - {f}_{M'} \|^2 + \|\tilde{f}_{M \wedge M'} - {f}_{M \wedge M'} \|^2  + \|f - {f}_{M}\|^2 \\ 
        &\quad\quad\quad\quad\quad\quad\quad\quad+ \| \proj_{S_{M'}}( (\hat{f}_M - \tilde{f}_M)) -  ( \hat{f}_{M'} - \tilde{f}_{M'})) \|^2 \bigg) - \Lambda^{(1)}(M') \\
        &\leq  6 \bigg(2 \|\tilde{f}_{M'} - {f}_{M'} \|^2 + \|f - {f}_{K}\|^2 + 2\| \proj_{S_{M'}}( (\hat{f}_M - \tilde{f}_M))\|^2 + 2\| ( \hat{f}_{M'} - \tilde{f}_{M'})) \|^2 \bigg) - \Lambda^{(1)}(M') \;.
    \end{aligned}
\end{equation}
Finally, the decomposition of $\Lambda^{(1)}(\cdot)$ yields
\begin{equation}
    \begin{aligned}
            \|\proj_{S_{M'}}&(\hat{f}_M) - \hat{f}_{M'} \|^2 - \Lambda^{(1)}(M') \\
        &=  6 \bigg(2 \p{\|\tilde{f}_{M'} - {f}_{M'} \|^2  - \frac{\Lambda^{(1)}_{\text{samp}}(M')}{12}} + \|f - {f}_{M}\|^2  + 2\| \proj_{S_{M'}}( (\hat{f}_M - \tilde{f}_M))\|^2 \\
        &\quad\quad\quad\quad\quad\quad\quad\quad+ 2\p{\| \hat{f}_{M'} - \tilde{f}_{M'}) \|^2 - \frac{\Lambda^{(1)}_{\text{priv}}(M')}{12} } \bigg) \\
        &\leq  6 \bigg(2 \underbrace{\p{\|\tilde{f}_{M'} - {f}_{M'} \|^2  - \frac{\Lambda^{(1)}_{\text{samp}}(M')}{12}}_+}_{\red{\text{Extra term 2}}} +  \|f - {f}_{M}\|^2  + 2\| \hat{f}_M - \tilde{f}_M\|^2\\
        &\quad\quad\quad\quad\quad\quad\quad\quad+ 2\underbrace{\p{\| \hat{f}_{M'} - \tilde{f}_{M'} \|^2 - \frac{\Lambda^{(1)}_{\text{priv}}(M')}{12} }_+}_{\red{\text{Extra term 3}}} \bigg) \;.
    \end{aligned}
\end{equation}

We thus have decomposed the problem in quantities that we can perfectly control, and with two extra terms that we have to control. This is where the penalization terms are useful in order to force the exponential convergence.

\paragraph{Control of the extra term 2.}

This term is handled with the help of the Talagrand inequality \cite{Talagrand1996InMat.126..505T,Ledoux_PS_1997__1__63_0,Klein10.1214/009117905000000044}, using a strategy close to the one presented in \cite{Non_Paramatric_Comte_2017}.

Let $\mathcal{M}' \in \mathfrak{M}$, we have that 
\begin{equation*}
    \|\tilde{f}_{M'} - {f}_{M'} \|^2 = \sup_{g : \|g\| \leq 1} |\langle \tilde{f}_{M'} - {f}_{M'}, g\rangle|^2 \;.
\end{equation*}
Furthermore, by separability of $L^2$ and the fact that $g \mapsto |\langle \tilde{f}_{M'} - {f}_{M'}, g\rangle|^2$ is continuous, we may consider this supremum over a \emph{countable} family of functions only (for applying \Cref{factTalagrandInequality}).

For any $g$ such that $\|g \| \leq 1$,
\begin{equation}
    \begin{aligned}
        \langle \tilde{f}_{M'} - {f}_{M'}, g\rangle
        &= \left\langle \frac{1}{n} \sum_{i=1}^n \p{ \sum_{k \in \{-M, \dots, M \}^d} \bar{\phi}_k(X_i)  {\phi}_k - f_{K'}}, g \right\rangle \\
        &=  \frac{1}{n} \sum_{i=1}^n \p{ 
        \underbrace{\sum_{k \in \{-M, \dots, M \}^d} \left\langle{\phi}_k , g \right\rangle \bar{\phi}_k(X_i)}_{\defeq T_g^{(K')}(X_i)}
        - 
        \underbrace{\left\langle f_{K'} , g \right\rangle}_{= \E \p{T_g^{(M')}(X_i)}}} \\
        &= \nu_n(T_g^{(M')}) \;,
    \end{aligned}
\end{equation}
where $\nu_n(T_g^{(M')})$ is defined in \Cref{factTalagrandInequality}.

We may thus rewrite 
\begin{equation*}
    \|\tilde{f}_{M'} - {f}_{M'} \|^2 = \sup_{g : \|g\| \leq 1} |\nu_n(T_g^{(M')})|^2 \;,
\end{equation*}
where the $\sup$ may be restricted to a countable family. However, $\nu_n(T_g^{(M')})$ is \emph{not} a real-valued quantity, and we cannot apply \Cref{factTalagrandInequality} directly. We will have to resort to decompose the quantities of interest and to add an extra factor $2$ at the end since 
\begin{equation*}
    |\nu_n(T_g^{(M')})|^2 = R(\nu_n(T_g^{(M')}))^2 + I(\nu_n(T_g^{(M')}))^2 = \nu_n(R(T_g^{(M')}))^2  + \nu_n(I(T_g^{(M')}))^2 \;,
\end{equation*}
and since taking the real part or the imaginary part are contractive projections, and hence reduce the quantities such as the modulus and the variance.

We may first see that
\begin{equation}
    \begin{aligned}
         \| T_g^{(M')}\|^2 
        &= \sum_{k \in \{-M', \dots, M' \}^d} |\left\langle{\phi}_k , g \right\rangle|^2 \\ 
        &\leq \| g\|^2 \\
        &\leq 1 
    \end{aligned}
\end{equation}
because $\|g\| \leq 1$.

Hence, we may write 
\begin{equation}
    T_g^{(M')} = \sum_{k \in \{-M', \dots, M' \}^d} \alpha_k \phi_k
\end{equation}
where
\begin{equation}
    \sum_{k \in \{-M', \dots, M' \}^d} |\alpha_k|^2 \leq 1.
\end{equation}

Then, 
\begin{equation}
    \begin{aligned}
    |\nu_n(T_g^{(M')})|^2 
        &\leq \left|\sum_{k \in \{-M', \dots, M' \}^d} \alpha_k \nu_n(\phi_k) \right|^2 \\
        &\stackrel{\text{Cauchy-Schwarz}}{\leq} \p{\sum_{k \in \{-M', \dots, M' \}^d} |\alpha_k|^2} \p{\sum_{k \in \{-M', \dots, M' \}^d} |\nu_n(\phi_k)|^2}\\
        &\stackrel{}{=} \sum_{k \in \{-M', \dots, M' \}^d} |\nu_n(\phi_k)|^2 \;,
    \end{aligned}
\end{equation}
which in turn gives that
\begin{equation}
    \begin{aligned}
    \max_{P(\cdot) = R(\cdot) \text{ or } I(\cdot)} \left\{ \p{\E \p{\sup_{T_g^{(M')} : \|g\| \leq 1} |\nu_n \p{P(T_g^{(M')})} |}}^2\right\}
        &\leq\p{\E \p{\sup_{T_g^{(M')} : \|g\| \leq 1} |\nu_n \p{T_g^{(M')}} |}}^2\\
        &\stackrel{\text{Jensen}}{\leq} \E \p{\sup_{T_g^{(K')} : \|g\| \leq 1} \p{\mu_n \p{t} }^2} \\
        &\leq \E \p{ \sum_{k \in \{-M', \dots, M' \}^d} |\nu_n(\phi_k)|^2} \\
        &= \frac{1}{n} \sum_{k \in \{-M', \dots, M' \}^d} \V \p{\phi_k(X_1)} \\
        &\stackrel{\Cref{lemmaProvicius}}{\leq} \frac{(2 M' +1)^d}{n} \;.
    \end{aligned}
\end{equation}
This last value may thus be used as $H^2$ in the application of \Cref{factTalagrandInequality}.

Furthermore, for any $g$ such that $\|g \| \leq 1$,
\begin{equation}
    \begin{aligned}
    \max\{ \| R(T_g^{(M')})\|_{\infty} , \| I(T_g^{(M')})\|_{\infty} \}
        &\leq\| T_g^{(M')}\|_{\infty} \\
        &= 
        \sup_t \left| \sum_{k \in \{-M', \dots, M' \}^d} \left\langle{\phi}_k , g \right\rangle \bar{\phi}_k(t) \right| \\
        &\stackrel{\text{Cauchy-Schwarz}}{\leq}
        \sup_t
        \sqrt{\sum_{k \in \{-M', \dots, M' \}^d} |\left\langle{\phi}_k , g \right\rangle|^2}
        \sqrt{\sum_{k \in \{-M', \dots, M' \}^d} |\bar{\phi}_k(t)|^2} \\
         &\stackrel{\|g\|\leq 1 \& |\bar{\phi}_k(\cdot)| \leq 1}{\leq}
         \sqrt{(2 M' + 1)^d} \\
         &\leq \sqrt{n}
    \end{aligned}
\end{equation}
where the last inequality comes from the fact that $(2 \max \mathcal{M} + 1)^d \leq n$.
This gives the value of $M_1$ for \Cref{factTalagrandInequality}.

Finally, for any $g$ such that $\|g \| \leq 1$,
\begin{equation}
    \begin{aligned}
    \max \{ \V(R(T_g^{(M')}(X_1))), \V(I(T_g^{(M')}(X_1)))\}
        &\leq \V(T_g^{(M')}(X_1)) \\
        &\leq \E\p{\left|T_g^{(M')}(X_1)\right|^2} \\
        &= \int \left|T_g^{(M')}(x)\right|^2 f(x) dx \\
        &\stackrel{|f(\cdot)| \| \leq f\|_{\infty} \text{ a.s.}}{\leq } \| f\|_{\infty} \int \left|T_g^{(M')}\right|^2  dx \\
        &\stackrel{\| T_g^{(K')}\| \leq 1}{\leq } \| f\|_{\infty}
    \end{aligned}
\end{equation}
which gives the value of $v$ for \Cref{factTalagrandInequality}.

So in the end, \Cref{factTalagrandInequality} tells us that there exists absolute constants $C_1, C_2, C_3 > 0$ such that, when tuned with $\Lambda^{(1)}_{\text{samp}}(K) \geq 96 \frac{(2K+1)^d}{n}$,
\begin{equation}
    \begin{aligned}
        \E \p{ \sum_{K' \in \mathcal{K}} \p{\|\tilde{f}_{M'} - {f}_{M'} \|^2 - \frac{\Lambda^{(1)}_{\text{samp}}(K')}{12}}_+}
        &\leq 
        \sum_{M' \in \mathcal{K}}
        \frac{C_1}{n}\p{\| f\|_{\infty} e^{-C_2 \frac{(2 M' +1)^d}{\| f\|_{\infty}}} + e^{- C_2 \sqrt{(2 M' +1)^d}}}
    \end{aligned}
\end{equation}

Hence, since the series $\sum_n e^{-n}$ and $\sum_n e^{-\sqrt{n}}$ converge, there exists a constant $C$ depending only on $\| f\|_{\infty}$ such that
\begin{equation}
    \begin{aligned}
        \E \p{ \sum_{M' \in \mathcal{M}} \p{\|\tilde{f}_{M'} - {f}_{M'} \|^2 - \frac{\Lambda^{(1)}_{\text{samp}}(M')}{12}}_+}
        &\leq 
        \frac{C}{n} \;.
    \end{aligned}
\end{equation}

\paragraph{Control of the extra term 3.}

For any $\mathcal{M} \in \mathcal{M}$,

\begin{equation}
\begin{aligned}
\E \p{ \max_{M' \in \mathcal{M}} \p{\|\hat{f}_{M'} - \tilde{f}_{M'} \|^2 - \frac{\Lambda^{(1)}_{\text{priv}}(M')}{12} }_+} 
&\leq 
\E \p{ \sum_{M' \in \mathcal{M}} \p{\| \hat{f}_{M'} - \tilde{f}_{M'} \|^2 - \frac{\Lambda^{(1)}_{\text{priv}}(M')}{12} }_+} \\
&=
\sum_{M' \in \mathcal{M}}  \E \p{ \p{\|  \hat{f}_{M'} - \tilde{f}_{M'} \|^2 - \frac{\Lambda^{(1)}_{\text{priv}}(M')}{12} }_+}
\end{aligned}
\end{equation}

For any $M'$, we may notice that $\| \hat{f}_{{M'}} - \tilde{f}_{{M'}}\|^2$ has a $\chi^2$ distribution scaled by $\sigma_{M'}$  and with $2(2M'+1)^d$ degrees of freedom. \Cref{factChiSquaredConcentration} using $\delta=1$ thus yields:
\begin{equation}
    \E\p{\p{\| \hat{f}_{{M'}} - \tilde{f}_{{M'}}\|^2 - (1+1) \sigma_{M'}^2 2(2M'+1)^d}_+ } \leq \frac{2 \sigma_{M'}^2}{1} e^{- \frac{2(2M'+1)^d }{4}} + 2 \sigma_{M'}^2 e^{- \frac{2(2M'+1)^d }{2}}
\end{equation}
Furthermore, since $\sigma_{M'} = \frac{2 \sqrt{(2M'+1)^d}}{n\sqrt{\rho'}}$, we have that  
\begin{equation}
    \E\p{\p{\| \hat{f}_{{M'}} - \tilde{f}_{{M'}}\|^2 - \frac{8 (2M'+1)^{2d}}{n^2 \rho'}}_+ } \leq C \frac{(2M'+1)^{d}}{n^2 \rho'} \p{e^{- \frac{(2M'+1)^d }{2}} + e^{- \frac{(2M'+1)^d }{1}}} \;,
\end{equation}
where $C$ is a non-negative absolute constant.

In the end, using that from our statement $\Lambda^{(1)}_{\text{priv}}(M') \geq  \frac{96 (2M'+1)^{2d}}{n^2 \rho'}$, we may write that
\begin{equation}
\label{likjhfkjdqsdfvfd}
    \begin{aligned}
        \E \p{ \max_{M' \in \mathcal{M}} \p{\|\hat{f}_{M'} - \tilde{f}_{M'} \|^2 - \frac{\Lambda^{(1)}_{\text{priv}}(M')}{12} }_+} 
        &\leq 
        \sum_{M' \in \mathcal{M}}  \E\p{\p{\| \hat{f}_{{M'}} - \tilde{f}_{{M'}}\|^2 - \frac{8 (2M'+1)^{2d}}{n^2 \rho'}}_+ }\\
        &\leq 
        \sum_{M' \in \mathcal{M}} C \frac{(2M'+1)^{d}}{n^2 \rho'} \p{e^{- \frac{(2M'+1)^d }{2}} + e^{- \frac{(2M'+1)^d }{1}}}  \\
        &\leq 
        \frac{C}{n^2 \rho'} \sum_{j \in \N} j \p{e^{- \frac{j }{2}} + e^{- \frac{j}{1}}} \\
        &\leq \frac{C'}{n^2 \rho'}
    \end{aligned}
\end{equation}
where $C'$ is a non-negative absolute constant since $\sum_{j \in \N} j \p{e^{- \frac{j }{2}} + e^{- \frac{j}{1}}}$ is finite.

\paragraph{Control of the extra term 1.}

For a fixed $\mathcal{M} \in \mathfrak{M}$,
\begin{equation}
    \begin{aligned}
        \E &\p{\p{\| \proj_{S_{M}}(\hat{f}_{\hat{M}})  - \proj_{S_{\hat{M}}}(\hat{f}_{{M}})\|^2 - (\Delta_{\text{priv}}(M) + \Delta_{\text{priv}}(\hat{M}))}_+} \\
        &\leq
        \E \p{\p{2\| \proj_{S_{{M}}}(\hat{f}_{\hat{M}}) - \tilde{f}_{\hat{M} \wedge M} \|^2 + 2\| \proj_{S_{\hat{M}}}(\hat{f}_{{M}}) - \tilde{f}_{\hat{M} \wedge M} \|^2 - (\Delta_{\text{priv}}(M) + \Delta_{\text{priv}}(\hat{M}))}_+} \\
        &\leq
        \E \p{\p{2\| \hat{f}_{\hat{M}} - \tilde{f}_{\hat{M}} \|^2 + 2\| \hat{f}_{{M}} - \tilde{f}_{{M}} \|^2 - (\Delta_{\text{priv}}(M) + \Delta_{\text{priv}}(\hat{M}))}_+} \\
        &\leq
        \E \p{\p{2\| \hat{f}_{\hat{M}} - \tilde{f}_{\hat{M}} \|^2 - \Delta_{\text{priv}}(\hat{M})}_+ +  2\| \hat{f}_{{M}} - \tilde{f}_{{M}} \|^2 } \\
        &\leq 
        \E \p{\sum_{K' \in \mathcal{M}}\p{2\| \hat{f}_{M'} - \tilde{f}_{M'} \|^2 - \Delta_{\text{priv}}(M')}_+ +  2\| \hat{f}_{{M}} - \tilde{f}_{{M}} \|^2 } \\
    \end{aligned}
\end{equation}

Furthermore, following a roadmap similar as the one used in the control of the extra term 3 (see \eqref{likjhfkjdqsdfvfd}), we observe that if $\Delta_{\text{priv}}(M') \geq  \frac{16 (2M'+1)^{2d}}{n^2 \rho'}$, there exists an absolute constant $C > 0$ such that 
\begin{equation}
    \E \p{\sum_{M' \in \mathcal{M}}\p{2\| \hat{f}_{M'} - \tilde{f}_{M'} \|^2 - \Delta_{\text{priv}}(M')}_+ +  2\| \hat{f}_{{M}} - \tilde{f}_{{M}} \|^2 } \leq C \p{\frac{1}{n^2 \rho'} + \| \hat{f}_{{M}} - \tilde{f}_{{M}} \|^2}
\end{equation}

\paragraph{Putting the pieces together.}

All in all, by taking the expectation, we have proved that for any $M \in \mathcal{M}$,
\begin{equation}
    \begin{aligned}
        \E \p{\|\hat{f}_{\hat{M}} - f \|^2} / C
        &\leq 
        \|f - {f}_{M}\|^2  + \E \p{\| \hat{f}_M - \tilde{f}_M\|^2} + \E \p{\sum_{M' \in \mathcal{M}} \p{\| \hat{f}_{M'} - \tilde{f}_{M'} \|^2 - \frac{\Lambda^{(1)}_{\text{priv}}(M')}{12} }_+}  \\ 
        &+ \E \p{\sum_{M' \in \mathcal{M}}\p{\| \hat{f}_{M'} - \tilde{f}_{M'} \|^2 - \frac{\Delta_{\text{priv}}(M')}{2}}_+ } +\E \p{ \sum_{M' \in \mathcal{M}} \p{\|\tilde{f}_{M'} - {f}_{M'} \|^2 - \frac{\Lambda^{(1)}_{\text{samp}}(M')}{12}}_+} \\
        &+ \Lambda^{(2)}(M)
    \end{aligned}
\end{equation}
where $C>0$ is an absolute constant

Furthermore, $\E \p{\| \hat{f}_M - \tilde{f}_M\|^2} = 2 (2M + 1)^d \sigma_M^2$, and with the values of $\Lambda^{(1)}_{\text{samp}}(\cdot)$, $\Lambda^{(1)}_{\text{priv}}(\cdot)$ and $\Delta_{\text{priv}}(\cdot)$ that were taken within the proof, the other expectations are controlled, yielding
\begin{equation}
    \begin{aligned}
        \E \p{\|\hat{f}_{\hat{M}} - f \|^2} / C'
        &\leq 
        \|f - {f}_{M}\|^2  +2 (2M + 1)^d \sigma_M^2 + \E \p{\sum_{M' \in \mathcal{M}} \p{\| \hat{f}_{M'} - \tilde{f}_{M'} \|^2 - \frac{\Lambda^{(1)}_{\text{priv}}(M')}{12} }_+}  \\ 
        &+ \E \p{\sum_{M' \in \mathcal{M}}\p{\| \hat{f}_{M'} - \tilde{f}_{M'} \|^2 - \frac{\Delta_{\text{priv}}(M')}{2}}_+ } +\E \p{ \sum_{M' \in \mathcal{M}} \p{\|\tilde{f}_{M'} - {f}_{M'} \|^2 - \frac{\Lambda^{(1)}_{\text{samp}}(M')}{12}}_+} \\
        &+ \Lambda^{(2)}(M)
    \end{aligned}
\end{equation}

\subsection{Proof of \Cref{theoremAdaptivePrivateUtility}}
\label{proofOfTheoremAdaptivePrivateUtility}

We recall that for any $M \in \mathcal{M}$, the bias-variance tradeoff $BV(M)$ in \Cref{theoremAdaptiveSelectionUtility} reads 
\begin{equation}
        \begin{aligned}
            BV(M) 
            \leq 
            &{\| f - f_M\|^2} + 
            {\frac{ (2M + 1)^d}{n}} + {2 (2M + 1)^d \sigma_M^2} \;,
        \end{aligned}
    \end{equation}
where $\sigma_M = \frac{2 \sqrt{(2M+1)^d}}{n\sqrt{\rho / |\mathcal{M}|}}$. 

As in the proof of \Cref{theoremUpperBoundSobolevClass}, the dichotomy of having the variance dominated by sampling or privacy leads to the the introduction of the optimal cut-off 
\begin{equation*}
    M^* + 1 \eqdef \min \left\{  (n/2^d) ^{\frac{1}{2 \beta + d}} , \p{n \sqrt{\rho / \mathcal{M}} / 2^d}^{\frac{1}{\beta + d}}  \right\} \;.
\end{equation*}
If one could guarantee that $M^* +1$ belongs to $\mathcal{M}$, then \Cref{theoremAdaptiveSelectionUtility} would guarantee the advertised result. However, this is not the case.

Even if one cannot guarantee that $M^* +1 \in \mathcal{M}$, with the construction rule for $\mathcal{M}$, we can always guarantee that for $n$ big enough (the "big enough" depends on $\beta$ and $d$), there will exist $M'+1 \in \mathcal{M}$ such that $(M^*+1)/2 \leq M' +1 \leq M^* +1$.

Since the variance terms are non-increasing with $M$, using $M'$ instead of $M^*$ only decreases the variance. 

The bias term on the other hand is non-decreasing with $M$. However, by looking at the expressions of the bias in \Cref{lemmaBiasUBSobolev} or \Cref{prop:Fourier_Sobolev_estimates} shows that in the worst case, being off by a factor at most $1/2$ degrades the estimation bias by a factor $2^{2 \beta}$.

Using that the $\min_{M \in \mathcal{M}} BV(M)$ in \Cref{theoremAdaptiveSelectionUtility} is upper-bounded by $BV(M')$ and that the residual terms are negligible yields the result.

\section{On non-integer multi-dimensional Sobolev spaces}  
\label{sec:realSobolevSpaces}

This section presents all the technical details on how to handle Sobolev spaces of non-integer smoothness.

\subsection{Definition}

Below, we shall discuss on the multi-dimensional Sobolev spaces with a non-integer parameter $\beta \ge 0$.

Our starting point is the space of Hölder functions with a (fractional) order $s \in (0,1)$ and radius $R$:
\begin{equation}
    \label{def:holder_space}
    \mathcal{H}_R(s)= \left\{f: \mathbb{R}^d \longrightarrow \mathbb{R} \, \vert \, \|f\|_{\mathcal{H}_s} := \sup_{(x,y) \in [0,1]^d \times [0,1]^d} \frac{|f(x)-f(y)|}{\|x-y\|^s} \leq R \right\}.
\end{equation}
Then, for any \textit{real} value $\beta$, we shall use the decomposition $\beta=\lfloor \beta \rfloor + \nu$ where $\nu = \beta-\lfloor \beta \rfloor \in [0,1)$. 
In this decomposition, $\lfloor \beta \rfloor$ is then the integer part of the order derivatives and $\nu$ the fractional one: $\lfloor \beta \rfloor$ encodes for a number of integer derivatives whereas $\nu$ refers to an Hölder smoothness of these derivatives. 

For a given $L>0$, we will say that $f \in S_L(\beta)$ if:
\begin{equation}
    \label{def:sobolev_fractional}
    S_L(\beta) := \left\{ f:\mathbb{R}^d \longrightarrow \mathbb{R} \, \vert \, \sum_{|\alpha|=\lfloor \beta \rfloor} \|\partial^{\alpha} f\|_2^2  + \mathbf{1}_{\nu >0} \sum_{|\alpha|=\lfloor \beta \rfloor} 
\|\partial^{\alpha} f\|_{\mathcal{H}_\nu}^2 \leq L^2 \right\}.
\end{equation}
We observe that when $\beta$ is an integer, $S_L(\beta)$ synchronises with the standard definition. 

\subsection{Control of the bias}

We establish below the important tail behaviour of the Fourier series in our generalized Hölder Sobolev spaces:
\begin{proposition}
    \label{prop:Fourier_Sobolev_estimates}
    Assume that $f \in  S_L^p(\beta)$, then an explicit constant $\square(\beta)$ independent from $d$ exists such that
     $$
     \sum_{k \notin \{-M,\ldots,M\}^d} |\theta_k(f)|^2 \leq \square(\beta) d (M+1)^{-2\beta} L^2 $$
\end{proposition}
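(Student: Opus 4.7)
The plan is to decompose $\beta = \lfloor\beta\rfloor + \nu$ with $\nu \in [0,1)$ and handle the integer and fractional parts separately. For $\nu = 0$ the statement reduces directly to \Cref{lemmaBiasUBSobolev} (which in fact gives the bound without the $d$ factor), so the substantive content is the fractional case $\nu \in (0,1)$. The $d$ factor in the claim will emerge naturally from covering the high-frequency index set by $d$ axis-aligned slabs, while the extra $(M+1)^{-2\nu}$ decay will come from the Hölder seminorm in \eqref{def:sobolev_fractional} via a Fourier translation identity.

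Concretely, writing $A_i \eqdef \{k \in \Z^d : |k_i| \geq M+1\}$, I would first bound
\begin{equation*}
    \sum_{k \notin \{-M,\ldots,M\}^d} |\theta_k|^2 \leq \sum_{i=1}^d \sum_{k \in A_i} |\theta_k|^2 \;.
\end{equation*}
For each $i$, set $\alpha_i \eqdef \lfloor\beta\rfloor e_i$ and $g_i \eqdef \partial^{\alpha_i} f$. The integration-by-parts computation from the proof of \Cref{lemmaFourierShrinkage} yields $\hat{g_i}(k) = (i_\C 2\pi k_i)^{\lfloor\beta\rfloor} \theta_k$, while the definition of $\mathcal{S}_L^p(\beta)$ forces $\|g_i\|_{\mathcal{H}_\nu} \leq L$. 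On $A_i$ the factor $|k_i|^{2\lfloor\beta\rfloor}$ is at least $(M+1)^{2\lfloor\beta\rfloor}$, so the proposition reduces to the Fourier-tail estimate
\begin{equation*}
    \sum_{k \in A_i} |\hat{g_i}(k)|^2 \leq C_\nu \|g_i\|_{\mathcal{H}_\nu}^2 (M+1)^{-2\nu} \;.
\end{equation*}

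To prove the tail estimate, I would start from the elementary $\|g_i(\cdot + h e_i) - g_i\|_2 \leq \|g_i\|_{\mathcal{H}_\nu} |h|^\nu$ (periodicity together with \eqref{def:holder_space} allows translation by any $|h| \leq 1$), which translates through Parseval into
\begin{equation*}
    \sum_{k \in \Z^d} \left|e^{-i_\C 2\pi k_i h} - 1\right|^2 |\hat{g_i}(k)|^2 \leq \|g_i\|_{\mathcal{H}_\nu}^2 |h|^{2\nu} \;.
\end{equation*}
The main obstacle is that no single value of $h$ yields a useful lower bound across $A_i$: the factor $|e^{-i_\C 2\pi k_i h} - 1|^2 = 2 - 2\cos(2\pi k_i h)$ vanishes whenever $k_i h \in \Z$. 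I would circumvent this by averaging both sides over $h = t/(M+1)$, $t \in [1,2]$: the right-hand side averages to at most $4\|g_i\|_{\mathcal{H}_\nu}^2 (M+1)^{-2\nu}$, and a one-line computation of $\int_1^2 \p{2 - 2\cos(2\pi k_i t/(M+1))}\, dt$ shows that, for every $|k_i| \geq M+1$, the averaged left factor stays above the absolute constant $2 - 2/\pi$, because the oscillatory part is bounded by $(M+1)/(\pi|k_i|) \leq 1/\pi$. This delivers the Fourier-tail estimate with an explicit $C_\nu$ depending only on $\nu$.

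Reassembling gives $\sum_{k \in A_i} |\theta_k|^2 \leq C_\nu (2\pi)^{-2\lfloor\beta\rfloor} \|g_i\|_{\mathcal{H}_\nu}^2 (M+1)^{-2\beta}$, and summing over $i \in \{1,\ldots,d\}$ while using $\|g_i\|_{\mathcal{H}_\nu}^2 \leq L^2$ from \eqref{def:sobolev_fractional} yields the proposition with $\square(\beta) \eqdef C_\nu (2\pi)^{-2\lfloor\beta\rfloor}$, independent of $d$ as required.
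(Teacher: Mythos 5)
Your proof is correct and takes a genuinely different route at the decisive step. The paper's Proposition~\ref{prop:Fourier_Holder} covers the high-frequency set coordinate-by-coordinate with dyadic shells $|k_j| \in [2^m, 2^{m+1})$ and, for each shell, picks a single shift of size $\frac{2^{-m}}{3}$ along the $j^{\text{th}}$ direction, calibrated so that the Fourier multiplier $|1 - e^{-i_{\C} 2\pi \langle k, \mathfrak{h}^{(j)} \rangle}|^2$ is at least $1$ on that shell; summing the resulting geometric series over $m$ produces the $(M+1)^{-2s}$ decay with constant $C(s) = 2^{2s}3^{-s}/(1-2^{-2s})$. You instead average the Parseval translation identity over a continuous band of shifts $h = t/(M+1)$, $t\in[1,2]$, and show that the averaged multiplier $\int_1^2 \p{2 - 2\cos(2\pi k_i t/(M+1))}\, dt$ is bounded below by the absolute constant $2 - 2/\pi$ uniformly over $|k_i| \ge M+1$, the oscillatory contribution being killed by the prefactor $(M+1)/(\pi |k_i|) \leq 1/\pi$. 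The surrounding assembly---the union bound over the coordinate slabs $A_i$, the integration-by-parts relation from Lemma~\ref{lemmaFourierShrinkage} linking $|\theta_k|$ to $|\theta_k(\partial^{\alpha_i} f)|$, and the bound $\|\partial^{\alpha_i} f\|_{\mathcal{H}_\nu} \leq L$ from \eqref{def:sobolev_fractional}---is the same in spirit as the paper's, and in both cases the factor $d$ in the statement comes from the union over coordinate directions. Your averaging trick avoids the dyadic bookkeeping entirely and, as a minor bonus, gives a constant $C_\nu = 4/(2 - 2/\pi)$ that stays bounded as $\nu \to 0$ while the paper's $C(s)$ diverges; either way the final constant $\square(\beta)$ is explicit and $d$-independent, so the proposition follows.
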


We first state an important proposition on the relation between the fractional Holder exponent $s \in (0,1)$ of any function $f$ and the Fourier series associated to $f$.
\begin{proposition}
    \label{prop:Fourier_Holder}
    Assume that $f \in \mathcal{H}_R(s)$ for $s \in (0,1)$ and that $f$ satisfies the periodicity condition \eqref{equationPeriodicityCondition} for $\alpha = (0, \dots, 0)$, then the Fourier series associated to $(\theta_k(f))_{k \in \mathbb{Z}^d}$ of $f$ satisfies
    $$
    \sum_{k \notin \{-M,\ldots,M\}^d}  |\theta_k(f)|^2 \leq C(s) d R^2 (M+1)^{-2s},
    $$
    where 
    $
    C(s) = \frac{2^{2s} 3^{-s}}{1-2^{-2s}}.$
\end{proposition}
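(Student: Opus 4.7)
The strategy is the classical ``translate-and-Parseval'' argument, upgraded with a dyadic block decomposition to isolate the tail.

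First I would establish a master inequality linking $\|h\|^{2s}$ to a $\sin^2$-weighted $\ell^2$ sum over all Fourier coefficients. For $h \in \mathbb{R}^d$ set $\tau_h f(x) = f(x+h)$, where $f$ is extended $1$-periodically thanks to the periodicity assumption at order $\alpha=(0,\ldots,0)$. A direct substitution gives $\theta_k(\tau_h f) = e^{i_{\C} 2\pi \langle k,h \rangle} \theta_k(f)$, so Parseval yields
\begin{equation*}
\|\tau_h f - f\|_2^2 = \sum_{k \in \mathbb{Z}^d} \bigl|1 - e^{i_{\C}2\pi\langle k,h\rangle}\bigr|^2 |\theta_k(f)|^2 = \sum_{k \in \mathbb{Z}^d} 4\sin^2(\pi\langle k,h\rangle)\,|\theta_k(f)|^2.
\end{equation*}
Combined with the pointwise Hölder bound $\|\tau_h f - f\|_2^2 \le R^2 \|h\|^{2s}$, applied to the periodic extension, this produces the master inequality
\begin{equation*}
\sum_{k \in \mathbb{Z}^d} \sin^2(\pi\langle k,h\rangle)\,|\theta_k(f)|^2 \le \tfrac{R^2}{4}\,\|h\|^{2s}, \qquad \forall h \in \mathbb{R}^d. \qquad(\star)
\end{equation*}

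Next I would reduce the $d$-dimensional tail to one-dimensional slabs. Any multi-index $k \notin \{-M,\ldots,M\}^d$ satisfies $|k_i| \ge M+1$ for at least one coordinate $i$, so by sub-additivity
\begin{equation*}
\sum_{k \notin \{-M,\ldots,M\}^d} |\theta_k(f)|^2 \;\le\; \sum_{i=1}^d \sum_{|k_i| \ge M+1} |\theta_k(f)|^2,
\end{equation*}
which is the source of the linear factor $d$ in the statement. To bound each inner sum, I would dyadically decompose $\{|k_i| \ge M+1\} = \bigsqcup_{j \ge 0} B_j^{(i)}$, where $B_j^{(i)} := \{k : 2^j (M+1) \le |k_i| < 2^{j+1}(M+1)\}$, and apply $(\star)$ with the targeted one-dimensional shift $h = \frac{1}{3\cdot 2^j(M+1)} e_i$. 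For $k \in B_j^{(i)}$ the argument $\pi\langle k,h\rangle = \pi k_i / (3 \cdot 2^j(M+1))$ has absolute value in $[\pi/3, 2\pi/3]$, hence $\sin^2(\pi\langle k,h\rangle) \ge 3/4$. Dropping the non-negative contribution of indices outside $B_j^{(i)}$ in $(\star)$ then gives
\begin{equation*}
\sum_{k \in B_j^{(i)}} |\theta_k(f)|^2 \;\le\; \tfrac{R^2}{3}\,(3 \cdot 2^j(M+1))^{-2s}.
\end{equation*}
Summing the geometric series over $j \ge 0$ yields a bound on $\sum_{|k_i|\ge M+1}|\theta_k(f)|^2$ of the form $\mathrm{cst}(s)\,R^2 (M+1)^{-2s}/(1-2^{-2s})$, and a final sum over $i\in\{1,\ldots,d\}$ produces the claim.

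The main obstacle is not technical but bookkeeping: one has to make sure that the Hölder hypothesis, which is stated on the cube $[0,1]^d$, is legitimately applied to the \emph{periodic} extension of $f$ used when translating; this is exactly what the periodicity condition at order $\alpha=(0,\ldots,0)$ guarantees. A second, purely computational, point is that matching the precise constant $C(s) = 2^{2s}\,3^{-s}/(1-2^{-2s})$ claimed in the statement requires choosing the dyadic threshold optimally (the shift $h = (3 \cdot 2^j(M+1))^{-1} e_i$ and the cut-off $|k_i| \ge M+1$ can be rescaled by absolute factors to tune the constant); the qualitative scaling $d\,R^2(M+1)^{-2s}$ is robust to this choice.
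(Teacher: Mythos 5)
Your argument reproduces the paper's proof in essence: the same translate-and-Parseval identity, the same reduction to one-dimensional slabs via a union over coordinates, the same dyadic decomposition with a per-block shift of size $\tfrac{1}{3}$ times the reciprocal of the block's lower endpoint chosen so that the phase factor is uniformly bounded below, and the same geometric series; your only cosmetic change is to anchor the dyadic blocks at $M+1$ instead of at absolute powers of two, which is why the paper needs an extra $2^{2s}$ conversion factor at the end and you do not. Your caution about not exactly matching $C(s)$ is well placed but harmless: the paper's own computation actually yields $2^{2s}3^{-2s}/(1-2^{-2s})$ rather than the stated $2^{2s}3^{-s}/(1-2^{-2s})$ (a $3^{-2s}$ silently becomes $3^{-s}$ between two displayed lines), and only the qualitative scaling $d\,R^2(M+1)^{-2s}$ is used downstream.
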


\begin{proof}
Below, $k$ refers to a $d$ dimensional vector of integers, and $\max(|k|)$ is the maximal value of the vector that contains the absolute values of the coordinates of $k$.

    We consider $f$ and a translation of $f$ denoted by $f_{\mathfrak{h}}$: $f_{\mathfrak{h}}(x) = f(x-\mathfrak{h})$ where $\mathfrak{h}$ is any vector of $[0,1]^d$. Using the periodicity of $f$, we have:
    $$
    \forall k \in \mathbb{Z}^d \qquad \theta_k(f) = \int_{[0,1]^d} f(x) e^{-i_{\C} 2 \pi \langle k, x \rangle} \text{d}x \qquad \text{and} \qquad \theta_k(f_{\mathfrak{h}}) = \int_{[0,1]^d} f_{\mathfrak{h}}(x) e^{-i_{\C} 2 \pi \langle k, x \rangle} \text{d}x
    = \theta_k(f) e^{-i_{\C} 2 \pi \langle k, \mathfrak{h} \rangle}, 
    $$
    which entails:
    $$
    \theta_k(f) \left(1-e^{-i_{\C} 2 \pi \langle k, \mathfrak{h} \rangle}\right) = \int_{[0,1]^d} (f(x)-f_{\mathfrak{h}}(x)) e^{-i_{\C} 2 \pi \langle k, x \rangle} \text{d}x 
    $$
We get from the Parseval equality and the fractional Holder hypothesis on $f$, for any collection of vectors $\mathfrak{h}^{(j)} \in [0,1]^d$:
\begin{equation}
\label{eq:parseval_collection}
\forall j \in \{1,\ldots,d\} \qquad 
    \sum_{k \in \mathbb{Z}^d} |\theta_k(f)|^2 \left| 1-e^{-i_{\C} 2 \pi \langle k, \mathfrak{h}^{(j)} \rangle}\right|^2 = \|f-f_{\mathfrak{h}^{(j)}}\|_2^2 \leq R^2 |\mathfrak{h}^{(j)}|^{2s}
\end{equation}
We now consider $k=(k_1,\ldots,k_d) \in \mathbb{Z}^d$ and assume that for $j \in \{1,\ldots,d\}: |k_j| = K \in [2^m,2^{m+1})$.
For this coordinate $j \in \{1,\ldots,d\}$, we consider the vector $\mathfrak{h}^{(j)} =  \frac{2^{-m}}{3} \delta_{j}$ and we verify that:
$$
|2 \pi \langle k, \mathfrak{h^{(j)}} \rangle| = \frac{2 \pi }{3} k_i 2^{-m} \in \left[\frac{2 \pi }{3},\frac{4 \pi }{3}\right).
$$
It implies that:
$$ \left| 1-e^{-i_{\C} 2 \pi \langle k, \mathfrak{h}^{(j)} \rangle}\right|^2 \ge 1,$$
which in turn leads to
\begin{align*}
\sum_{k \in \mathbb{Z}^d : |k_j| \in [2^m,2^{m+1})} |\theta_k(f)|^2
& \leq 
\sum_{k \in \mathbb{Z}^d : |k_j| \in [2^m,2^{m+1})}
 |\theta_k(f)|^2
\left| 1-e^{-i_{\C} 2 \pi \langle k, \mathfrak{h}^{(j)} \rangle}\right|^2
\\
& \leq R^2 |\mathfrak{h}^{(j)}|^{2s}
\end{align*}
where we applied Equation \eqref{eq:parseval_collection} in the last line. Using the value of $\mathfrak{h}^{(j)}$, we deduce that:
\begin{equation}
    \label{eq:borne_serie_j}
    \forall j \in \{1,\ldots,d\} \qquad 
    \sum_{k \in \mathbb{Z}^d : |k_j| \in [2^m,2^{m+1})} |\theta_k(f)|^2 \leq R^2 3^{-2s} 2^{-2ms}.
\end{equation}

We are now able to conclude the proof: we consider any integer $M \ge 1$ and the dyadic scale, which associates $m_0 \ge 0$ such that $2^{m_0} \leq M < 2^{m_0+1}$, we observe that
\begin{align*}
    \sum_{k \in \mathbb{Z}^d \,:\, k \notin \{-M,\ldots,M\}^d}
    |\theta_k(f)|^2     & \leq \sum_{k \in \mathbb{Z}^d \,:\, k \notin \{-2^{m_0},\ldots,2^{m_0}\}^d}
    |\theta_k(f)|^2 \\
    & \leq \sum_{k \in \mathbb{Z}^d \, \exists j \, : |k_j|\ge 2^{m_0}}
    |\theta_k(f)|^2 \\
    & \leq \sum_{j=1}^d \sum_{k \in \mathbb{Z}^d \, :   |k_j|\ge 2^{m_0}}
    |\theta_k(f)|^2 \\
    & \leq \sum_{j=1}^d \sum_{m \ge m_0} \sum_{k \in \mathbb{Z}^d \, :   2^{m} \leq |k_j| < 2^{m+1}}
    |\theta_k(f)|^2 \\
    & \leq R^2 3^{-s}  \sum_{j=1}^d \sum_{m \ge m_0}  2^{-2ms} \\
    & \leq \frac{R^2 3^{-s}}{1-2^{-2s}} d 2^{-2 m_0 s} \\
    & \leq \frac{R^2 2^{2s} 3^{-s}}{1-2^{-2s}} d (M+1)^{-2s}.
    \end{align*}
We obtain the conclusion of the proof with $C(s)=\frac{ 2^{2s} 3^{-s}}{1-2^{-2s}}.$
\end{proof}

\begin{proof}[Proof of Proposition \ref{prop:Fourier_Sobolev_estimates}]
    
We are now ready to extend our estimate stated in Lemma \ref{lemmaBiasUBSobolev} from integer Sobolev spaces to fractional ones.
Assume that $\beta>0$: we observe that
\begin{itemize}
    \item If $\beta \in \mathbb{N}$, then Lemma \ref{lemmaBiasUBSobolev} yields
    $$
    \|f-f_M\| \leq \frac{L^2}{(2\pi)^{2 \beta}} (M+1)^{-2 \beta}.
    $$
    \item Oppositely, if $\beta=\lfloor \beta \rfloor + s$ with $s \in (0,1)$ and assume that $f \in S_L^p(\beta)$, we know from Proposition \ref{prop:Fourier_Holder} that:
    $$
    \sum_{|\alpha| = \lfloor \beta \rfloor} \sum_{k \notin \{-M,\ldots,M\}^d} |\theta_k(\partial^{\alpha}(f))|^2 \leq 
    \sum_{|\alpha| = \lfloor \beta \rfloor} C(s) d  M^{-2s} \|\partial^{\alpha} f\|_{\mathcal{H}_s}^2 \leq 
     C(s) d (M+1)^{-2s} L^2.
    $$
    We then conclude following the same guidelines as the ones of 
    Lemma \ref{lemmaBiasUBSobolev}:
    \begin{align*}
((2\pi)(M+1))^{2 \lfloor \beta \rfloor }  \sum_{k \notin \{-M,\ldots,M\}^d} |\theta_k(f)|^2& \leq 
\sum_{|\alpha| = \lfloor \beta \rfloor} \sum_{k \notin \{-M,\ldots,M\}^d} (2 \pi k)^{2 \alpha} |\theta_k(f)|^2 \\
& \leq C(s) d (M+1)^{-2s} L^2,
    \end{align*}
    which implies with $\beta=\lfloor \beta \rfloor + s$ the final bound:
    $$
     \sum_{k \notin \{-M,\ldots,M\}^d} |\theta_k(f)|^2 \leq \frac{C(s)L^2 }{(2\pi)^{2 \lfloor \beta \rfloor}} d (M+1)^{-2\beta}.
    $$
\end{itemize}
\end{proof}

\subsection{Lower-bounds : Adaptation of the proof of \Cref{theoremLowerBoundSobolev} in the case of real-valued $\beta$'s}
\label{sec:adaptation_proof_lower_bound}

The only adaptation needed to the proof is to handle the new Hölder part in the definition. In fact, the only adaptation needed is to slightly modify the function $\psi (\cdot)$ in \Cref{proofOfTheoremLowerBoundSobolev}, and to verify that the subsequent family of functions defined from it is a family of densities of probability in $\mathcal{S}_L^p(\beta)$. We use the decomposition $\beta=\lfloor \beta \rfloor + \nu$ where $\nu = \beta-\lfloor \beta \rfloor \in [0,1)$.

Let $\epsilon >0$ that will be fixed later. The old $\psi$ of \Cref{proofOfTheoremLowerBoundSobolev} is replaced by a new $\psi(\cdot) = a \Psi\p{\frac{\cdot}{2}}$ where $a>0$ is fixed to a small enough value such that 
\begin{equation}
    \sum_{|\alpha|=\lfloor \beta \rfloor} \|\partial^{\alpha} \psi\|_2^2  + \mathbf{1}_{\nu >0} \sum_{|\alpha|=\lfloor \beta \rfloor} 
\|\partial^{\alpha} \psi\|_{\mathcal{H}_\nu}^2 \leq \epsilon\;.
\end{equation}
All the other quantities are defined from this new $\psi$ as in \Cref{proofOfTheoremLowerBoundSobolev}. 

The entire proof of \Cref{theoremLowerBoundSobolev} remains unchanged except for one detail : we first to check that the new family of densities $(f_{\theta})$ is in $\mathcal{S}_L^p(\beta)$ for non-integer $\beta$'s. We separate two cases :
\begin{itemize}
    \item \emph{$\beta \geq 1$} : Let $\theta \in \{ 1, \dots, m^d\}$, and let $|\alpha| = \lfloor \beta \rfloor$. With the same reasoning steps as in \eqref{kjhbdskqjfhsbkjhcvbkqsdf}, we obtain that 
    \begin{equation}
\begin{aligned}
    \|\partial^{\alpha} f_{\theta}\|_2^2
    &= \int_{[0, 1]^d} \p{h^{\beta} \sum_{i=1}^{m^d} \theta_i \p{x \mapsto \psi\p{\frac{x - p_i}{h}}}^{(\alpha)}}^2 \\
    &= \int_{[0, 1]^d} h^{2 \nu} \p{\sum_{i=1}^{m^d} \theta_i \psi^{(\alpha)}\p{\frac{\cdot - p_i}{h}}}^2 \\
    &\stackrel{\text{disjoint supports}}{=}  h^{2 \nu}\sum_{i=1}^{m^d} \theta_i \int_{[0, 1]^d} \p{ \psi^{(\alpha)}\p{\frac{\cdot - p_i}{h}}}^2 \\
    &\stackrel{\|\theta\|_1 \leq m^d \& \text{ variable swap}}{\leq} h^{2 \nu} m^d h^d \int_{[0, 1]^d} \p{\psi^{(\alpha)}}^2  \\
    &\stackrel{m^d h^d \leq 1}{\leq} h^{2 \nu}\|\partial^{\alpha} \psi\|_2^2
    \stackrel{h \leq 1}{\leq} \|\partial^{\alpha} \psi\|_2^2\;.
\end{aligned}
\end{equation}

In order to control $\|\partial^{\alpha} f_{\theta}\|_{\mathcal{H}_\nu}^2$, we will need the following lemma :
\begin{lemma}
\label{lemma:Hölder_Disjoint_Support}
    If $g_1$ and $g_2$ are continuous with compact supports and if their supports are disjoint, then
    \begin{equation*}
        \|g_1 + g_2\|_{\mathcal{H}_\nu} \leq \max \left\{\|g_1 \|_{\mathcal{H}_\nu}, \|g_2\|_{\mathcal{H}_\nu} \right\} \;.
    \end{equation*}
\end{lemma}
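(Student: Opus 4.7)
The plan is to bound $\|g_1+g_2\|_{\mathcal{H}_\nu}=\sup_{x\neq y}\frac{|(g_1+g_2)(x)-(g_1+g_2)(y)|}{\|x-y\|^\nu}$ by a case analysis on the position of $(x,y)$ relative to the compact, closed, disjoint supports $K_1\eqdef\mathrm{supp}(g_1)$ and $K_2\eqdef\mathrm{supp}(g_2)$. The crucial structural ingredient is that because each $g_i$ is continuous on $[0,1]^d$ and identically zero off $K_i$, continuity forces $g_i\equiv 0$ on the topological boundary $\partial K_i$ (the value from outside is $0$, hence so is the value from inside). This gives a supply of ``free zeros'' at boundary points, which we will exploit through a segment-connectivity argument to turn any difference $|(g_1+g_2)(x)-(g_1+g_2)(y)|$ into an expression to which the Hölder semi-norms directly apply.

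The easy subcases dispatch quickly. If $x,y\in K_1$, then $g_2(x)=g_2(y)=0$ and the difference is $|g_1(x)-g_1(y)|\leq\|g_1\|_{\mathcal{H}_\nu}\|x-y\|^\nu$; symmetric for $K_2$. If $x\notin K_1\cup K_2$ and $y\in K_1$, I parametrize the segment $\gamma(t)=(1-t)x+ty$, consider $t'=\inf\{t\in[0,1]:\gamma(t)\in K_1\}$, note that $\gamma(t')\in\partial K_1$ so $g_1(\gamma(t'))=0$, and conclude $|g_1(y)|=|g_1(y)-g_1(\gamma(t'))|\leq\|g_1\|_{\mathcal{H}_\nu}((1-t')\|x-y\|)^\nu\leq\|g_1\|_{\mathcal{H}_\nu}\|x-y\|^\nu$ (and $g_2$ contributes nothing). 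The truly interesting case is $x\in K_1$, $y\in K_2$, where $g_2(x)=g_1(y)=0$ and the difference reduces to $|g_1(x)-g_2(y)|$. Set $t_1=\sup\{t\in[0,1]:\gamma(t)\in K_1\}$ and $t_2=\inf\{t\in[t_1,1]:\gamma(t)\in K_2\}$; since $K_1,K_2$ are disjoint closed sets one has $t_1<t_2$, with $\gamma(t_1)\in\partial K_1$ and $\gamma(t_2)\in\partial K_2$, hence $g_1(\gamma(t_1))=g_2(\gamma(t_2))=0$. Choosing any $z=\gamma(t^*)$ with $t^*\in[t_1,t_2]$ gives a point at which both $g_1$ and $g_2$ vanish, and the triangle inequality plus the Hölder semi-norm applied separately to $g_1$ and to $g_2$ yields
\begin{equation*}
|g_1(x)-g_2(y)|\leq \|g_1\|_{\mathcal{H}_\nu}(t^*\|x-y\|)^\nu+\|g_2\|_{\mathcal{H}_\nu}((1-t^*)\|x-y\|)^\nu.
\end{equation*}

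The main obstacle is the final optimization in $t^*$ to recover the $\max$ on the right-hand side of the statement rather than a sum: I would choose $t^*$ so that, putting $H\eqdef\max\{\|g_1\|_{\mathcal{H}_\nu},\|g_2\|_{\mathcal{H}_\nu}\}$, the upper bound collapses to $H\|x-y\|^\nu$. Whenever one of the two Hölder norms strictly dominates (say $\|g_1\|_{\mathcal{H}_\nu}\geq\|g_2\|_{\mathcal{H}_\nu}$), I would push $t^*$ to the largest admissible value $t_2$, which leaves at most $H\bigl(t_2^\nu+(1-t_2)^\nu\bigr)\|x-y\|^\nu$; the remaining concavity issue of $t\mapsto t^\nu+(1-t)^\nu$ on $(0,1)$ is handled by either (i) balancing $t^*$ against the two semi-norms to make the two contributions equal, or (ii) tightening the argument by noticing that along the segment the chosen $\gamma(t^*)$ can be taken at a closest boundary point so that one of $t^*,1-t^*$ is in fact $0$. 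The verification that the latter is always achievable—i.e.\ that the $\mathcal{H}_\nu$-norm is genuinely localized enough to avoid paying the boundary factor twice—will be the technical crux; once this is settled, combining with the easy cases produces the announced inequality.
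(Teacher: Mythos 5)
Your setup (segment parametrization, case analysis, and the observation that $g_i$ vanishes on $\partial K_i$) matches the paper's up to the hard case $x\in K_1$, $y\in K_2$, but the triangle-inequality split you use there cannot be repaired to yield the claimed $\max$ bound, and neither of your proposed fixes works. Choosing any admissible $z=\gamma(t^*)$ with $t^*\in[t_1,t_2]\subset(0,1)$ gives
$|g_1(x)-g_2(y)|\leq \|g_1\|_{\mathcal H_\nu}(t^*)^\nu\|x-y\|^\nu+\|g_2\|_{\mathcal H_\nu}(1-t^*)^\nu\|x-y\|^\nu,$
and the prefactor $(t^*)^\nu+(1-t^*)^\nu$ exceeds $1$ for every $t^*\in(0,1)$ (it reaches $2^{1-\nu}$ at $t^*=1/2$), so the bound is strictly worse than $\max\{\|g_1\|_{\mathcal H_\nu},\|g_2\|_{\mathcal H_\nu}\}\,\|x-y\|^\nu$ whenever the two norms are comparable. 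Fix (i), balancing the two contributions, only rearranges the split and still leaves a sum that is provably at least $\max$ whenever $t^*$ is interior; fix (ii) is geometrically impossible, because $\gamma(0)=x\in K_1$ and $\gamma(1)=y\in K_2$, so neither endpoint is a zero of $g_1+g_2$, and the admissible $t^*$'s are exactly those in $[t_1,t_2]$, which is bounded away from $\{0,1\}$.

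The idea you are missing is that one must avoid the triangle inequality altogether. After reducing (as you did) to $|g_1(x)-g_2(y)|$, the paper applies the intermediate value theorem to $g_1\circ\gamma$ on the sub-segment from $x$ to a point $z_0$ where $g_1$ vanishes: since $g_1\circ\gamma$ is continuous and passes from $g_1(x)$ to $0$, it attains the value $g_2(y)$ at some $z$ between $x$ and $z_0$ (provided $g_2(y)$ lies between $0$ and $g_1(x)$, hence the WLOG normalization). Then
$|(g_1+g_2)(x)-(g_1+g_2)(y)|=|g_1(x)-g_1(z)|\leq\|g_1\|_{\mathcal H_\nu}\|x-z\|^\nu\leq\|g_1\|_{\mathcal H_\nu}\|x-y\|^\nu,$
a single Hölder increment of a single function, with no extra prefactor. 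Note this IVT step needs $g_2(y)$ to lie in the interval traced by $g_1\circ\gamma$; the paper's "$g_1(x)\ge g_2(y)$" normalization implicitly also requires the two values to not have opposite signs, a point worth being careful about if you write this up in full. But the core mechanism — replace your two-sided split at a common zero by an IVT-located interior point where $g_1$ attains the value $g_2(y)$ — is what turns your sum into the paper's $\max$.
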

\begin{proof}
    Let $x \neq y \in [0, 1]^d$. We will upper-bound the Hölder ratio $\frac{|(g_1 + g_2)(x)-(g_1 + g_2)(y)|}{|x-y|^{\nu}}$ by a Hölder ratio depending only on $g_1$ or $g_2$. If $x$ and $y$ both live in the support of either $g_1$ or $g_2$, then we may rewrite, in the case where it is in the support of $g_1$,
    \begin{equation}
        \frac{|(g_1 + g_2)(x)-(g_1 + g_2)(y)|}{|x-y|^{\nu}} \leq \frac{|g_1(x)-g_1(y)|}{|x-y|^{\nu}} \leq \|g_1 \|_{\mathcal{H}_\nu} \;.
    \end{equation}
    Alternatively, the case when it is in the support of $g_2$ gives the majoration by $\|g_2 \|_{\mathcal{H}_\nu}$.

    Now let us look at the case where $x$ and $y$ do not both live in the support of either $g_1$ or $g_2$. Let us suppose that $g_1(x) \geq g_2(y)$, the other case being treated in the same fashion. Since $g_1$ and $g_2$ have disjoint supports, there exists $t \in (0, 1)$ such that $g_1(t x + (1 - t) y ) = g_2(t x + (1 - t) y ) = 0$ (connexity argument). Now, by the intermediate values theorem ($g_1$ is continuous), there exists $t' \in [0, t]$ such that $g_1(t' x + (1 - t') y ) = g_2(y)$. We thus obtain that  
    \begin{equation}
    \begin{aligned}
        \frac{|(g_1 + g_2)(x)-(g_1 + g_2)(y)|}{|x-y|^{\nu}} 
        &=
        \frac{|g_1(x)-g_2(y)|}{|x-y|^{\nu}} \\
        &=
        \frac{|g_1(x)-g_1(t' x + (1 - t') y )|}{|x-y|^{\nu}} \\
        &\leq 
        \frac{|g_1(x)-g_1(t' x + (1 - t') y )|}{|x-(t' x + (1 - t') y )|^{\nu}} \\
        &\leq 
        \|g_1 \|_{\mathcal{H}_\nu}\;.
    \end{aligned}
    \end{equation}
    The other case leads to a majoration by $\|g_2 \|_{\mathcal{H}_\nu}$.
    All in all, this proves that for any $x \neq y$,
    \begin{equation*}
        \frac{|(g_1 + g_2)(x)-(g_1 + g_2)(y)|}{|x-y|^{\nu}}  \leq \max \left\{\|g_1 \|_{\mathcal{H}_\nu}, \|g_2\|_{\mathcal{H}_\nu} \right\} \;,
    \end{equation*}
    and taking the supremum on the left-hand side yields the desired result.
\end{proof}

Back to our problem, we may write that
\begin{equation}
\begin{aligned}
    \|\partial^{\alpha} f_{\theta}\|_{\mathcal{H}_\nu}
    &= \left\|h^{\beta} \sum_{i=1}^{m^d} \theta_i \p{x \mapsto \psi\p{\frac{x - p_i}{h}}}^{(\alpha)}\right\|_{\mathcal{H}_\nu} \\
    &= h^{\nu} \left\|  \sum_{i=1}^{m^d} \theta_i \psi^{(\alpha)}\p{\frac{\cdot - p_i}{h}}\right\|_{\mathcal{H}_\nu} \\
    &\stackrel{\Cref{lemma:Hölder_Disjoint_Support}}{\leq}  h^{\nu} \max_{i} \left\{ \left\|   \psi^{(\alpha)}\p{\frac{\cdot - p_i}{h}}\right\|_{\mathcal{H}_\nu} \right\} \\
    &= h^{\nu} \max_{i} \left\{ \sup_{x \neq y} \frac{\psi^{(\alpha)}\p{\frac{x - p_i}{h}} - \psi^{(\alpha)}\p{\frac{y - p_i}{h}}}{\left| x - y \right|^{\nu}}\right\} \\
     &= h^{\nu} \max_{i} \left\{ h^{- \nu} \sup_{x \neq y}  \frac{\psi^{(\alpha)}\p{x} - \psi^{(\alpha)}\p{y}}{\left| x - y \right|^{\nu}}\right\} \\
     &= \|\partial^{\alpha} \psi\|_{\mathcal{H}_\nu}\;.
\end{aligned}
\end{equation}

So all in all, fixing $\epsilon = L^2$ ensures that for any $\theta$,
\begin{equation}
    \sum_{|\alpha|=\lfloor \beta \rfloor} \|\partial^{\alpha} f_{\theta}\|_2^2  + \mathbf{1}_{\nu >0} \sum_{|\alpha|=\lfloor \beta \rfloor} 
\|\partial^{\alpha} f_{\theta}\|_{\mathcal{H}_\nu}^2 \leq L^2\;.
\end{equation}

\item $\beta \in (0, 1)$ : When $\beta < 1$, there is one extra technical detail to consider : Since no integer derivative is performed, the constant parts in the densities $(f_{\theta})$ do not vanish. This is not a problem for the Hölder part since the seminorm $\|\cdot\|_{\mathcal{H}_\nu}$ is unchanged up to the addition or removal of a constant function. For the sobolev par on the other hand, we may use that $\| g_1 + g_2\|^2 \leq \p{1 + \eta} \| g_1 \|^2 + \p{1 + 1 / \eta} \|g_2\|^2$ for any $g_1, g_2 \in L^2$ and any $\eta>0$, which gives that 
\begin{equation}
    \|f_{\theta}\|_2^2  + 
\|f_{\theta}\|_{\mathcal{H}_\nu}^2 \leq L^2\;.
\end{equation}
when applied with $g_1$ the constant part of $f_{\theta}$, $g_2$ the part with the kernels, $\eta = \frac{L^2-1}{2}$, and  $\epsilon$ that satisfies $\p{1 + 2 / \eta} \epsilon \leq \frac{L^2 - 1}{2}$. Obviously, this only holds if $L>1$. However, since $\beta \in (0, 1)$, Jensen's inequality already implies that any density of probability $g$ satisfies $\|g\|_2^2 + \|g\|_{\mathcal{H}_\nu}^2 \geq \|g\|_2^2 \geq 1$, with equality if and only if $g$ is the density of the uniform distribution. So $L>1$ is not restrictive on non-trivial classes of distributions $\mathcal{S}_L^p(\beta)$.

Now that we have verified that the family of densities $(f_{\theta})$ is a subset of $\mathcal{S}_L^p(\beta)$, the rest of the proof follows line by line the one of \Cref{theoremLowerBoundSobolev} in the case of integer-valued $\beta$.

\end{itemize}

\section{Technical results}



\begin{lemma}[Popoviciu's inequality for multivariate random variables]
\label{lemmaProvicius}
    Let $X$ be a random variable in $\R^{d'}$. If there exist $\mu$ and $\sigma$ such that $\|X - \mu \| \leq \sigma$ almost-surely, then one has 
    \begin{equation}
        \V(X) \eqdef \E \p{ \|X - \E (X) \|^2} \leq \sigma^2 \;,
    \end{equation}
    thus allowing to gain a factor $4$ compared to the natural majoration $\V(X) \leq 4 \sigma^2$.
    In particular, with the isometric identification $(\C, | \cdot |) \cong (\R^2, \| \cdot \|)$, this allows bounding the variance of a complex random variable.
\end{lemma}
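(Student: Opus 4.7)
The plan is to exploit the fact that $\E(X)$ is the unique minimizer of the functional $y \mapsto \E\p{\|X - y\|^2}$ over $y \in \R^{d'}$, so that replacing the true mean by the \emph{suboptimal} center $\mu$ can only enlarge the squared error. Concretely, for any deterministic $y \in \R^{d'}$, expanding the squared Euclidean norm via the inner product gives
\begin{equation*}
\|X - y\|^2 = \|X - \E(X)\|^2 + 2 \langle X - \E(X), \E(X) - y\rangle + \|\E(X) - y\|^2,
\end{equation*}
and taking expectations annihilates the cross term (since $\E(X - \E(X)) = 0$), leading to the Pythagoras-type identity $\E\p{\|X - y\|^2} = \V(X) + \|\E(X) - y\|^2 \geq \V(X)$.

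Specializing this identity at $y = \mu$ and using the almost-sure bound $\|X - \mu\| \leq \sigma$ would then yield
\begin{equation*}
\V(X) \leq \E\p{\|X - \mu\|^2} \leq \sigma^2,
\end{equation*}
which is the claim. The complex case is obtained for free through the stated isometric identification $(\C, |\cdot|) \cong (\R^2, \|\cdot\|)$, since the modulus on $\C$ and the Euclidean norm on $\R^2$ come from the same inner-product structure. I do not anticipate any real obstacle: the entire argument is a one-line Hilbert-space optimality remark, and the factor-of-four improvement over the naive bound $\|X - \E(X)\| \leq \|X - \mu\| + \|\mu - \E(X)\| \leq 2\sigma$ comes exactly from avoiding the triangle inequality and instead using the variational characterization of the mean.
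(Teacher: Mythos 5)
Your proposal is correct and follows essentially the same route as the paper: the paper's one-line argument is precisely the variational characterization of the mean (that $\E(X)$ minimizes $t \mapsto \E(\|X-t\|^2)$), and you simply spell out the Pythagoras-type expansion that justifies it before plugging in $t = \mu$. No gaps.
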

\begin{proof}
    $\V(X)$ minimizes the function $t \mapsto \E \p{\|X - t \|^2}$. Thus, $\V(X)$ is upper-bounded by the value of the same function in $\mu$, yielding the result. 
\end{proof}

\begin{lemma}[Existence of $\mathcal{C}^{\infty}$ function with support in unit ball of $\R^d$]
\label{existencecompactsupport}
    The function $\Psi$ from $R^d$ to $[0, + \infty)$ which is defined by
    \begin{equation}
        \Psi (x) \eqdef \begin{cases}
e^{- \frac{1}{1 - \|x\|^2}} \quad \text{if } \|x \| < 1\\
0 \quad \text{otherwise }
\end{cases}
    \end{equation}
is in $\mathcal{C}^{\infty}(\R^d)$ and takes non-negative values. 
\end{lemma}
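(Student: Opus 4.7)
The plan is to reduce the statement to the classical one-dimensional fact that the function $\psi_1: \R \to [0,+\infty)$ defined by $\psi_1(t) = e^{-1/t}$ for $t>0$ and $\psi_1(t)=0$ for $t\leq 0$ is of class $\mathcal{C}^{\infty}(\R)$, and then compose it with a smooth auxiliary map. Indeed, observe that $\Psi(x) = \psi_1\bigl(1-\|x\|^2\bigr)$ for every $x\in \R^d$: on the open unit ball, $1-\|x\|^2 > 0$ and the formulas agree, while outside the closed ball, $1-\|x\|^2 < 0$ and both sides are $0$. Since $x \mapsto 1-\|x\|^2$ is a polynomial, hence $\mathcal{C}^{\infty}(\R^d)$, the smoothness of $\Psi$ on $\R^d$ will follow from the smoothness of $\psi_1$ on $\R$ by the chain rule. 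Non-negativity is immediate from the definition.

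It thus remains to prove $\psi_1 \in \mathcal{C}^{\infty}(\R)$. On $(-\infty, 0)$ and on $(0,+\infty)$ separately this is trivial, so the obstacle is entirely at the point $t=0$. The key technical claim is that for every $k \in \N$ there exists a polynomial $P_k \in \R[X]$ such that
\begin{equation*}
    \forall t > 0, \quad \psi_1^{(k)}(t) = P_k(1/t)\, e^{-1/t},
\end{equation*}
which I would prove by a straightforward induction on $k$ using $\frac{d}{dt}(1/t) = -1/t^2$. With this in hand, the standard growth comparison $u^m e^{-u} \underset{u\to +\infty}{\longrightarrow} 0$ for every $m\in \N$ gives $\lim_{t \to 0^+} \psi_1^{(k)}(t) = 0$ for every $k$. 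Combined with the trivial $\lim_{t\to 0^-} \psi_1^{(k)}(t) = 0$, a second induction on $k$ (using the mean value theorem, or equivalently the characterization of $\mathcal{C}^k$ functions via the limits of difference quotients) yields that $\psi_1$ is differentiable at $0$ to every order, with $\psi_1^{(k)}(0) = 0$, and that $\psi_1^{(k)}$ is continuous at $0$. Hence $\psi_1 \in \mathcal{C}^{\infty}(\R)$.

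The hard part is really only the induction step that propagates differentiability at $0$ from order $k$ to order $k+1$: one must verify that the derivative of $\psi_1^{(k)}$ at $0$ exists \emph{and} coincides with $\lim_{t\to 0^+} \psi_1^{(k+1)}(t) = 0$, which is standard but the one place where care is needed. Everything else (non-negativity, smoothness away from the unit sphere, the chain-rule transfer from $\psi_1$ to $\Psi$) is mechanical.
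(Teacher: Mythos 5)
Your proof is correct, and it takes a genuinely different route from the paper's. The paper works directly in $\R^d$: by induction on the multi-index $\alpha$, it shows that on the open unit ball $\partial^{\alpha}\Psi(x) = \frac{P_{\alpha}(x)}{Q_{\alpha}(x)} e^{-1/(1-\|x\|^2)}$ with $Q_\alpha \neq 0$, and then invokes the dominance of the exponential decay near $\|x\|=1$ to conclude that $\partial^{\alpha}\Psi$ extends continuously by $0$ across the sphere. You instead factor $\Psi = \psi_1 \circ g$ with $g(x) = 1 - \|x\|^2$ a polynomial and $\psi_1(t) = e^{-1/t}\mathds{1}_{t>0}$, reduce everything to the classical one-dimensional fact $\psi_1 \in \mathcal{C}^{\infty}(\R)$, and transfer smoothness to $\R^d$ via the chain rule. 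The core technical engine is the same in both (induction producing rational-times-exponential derivatives, plus $u^m e^{-u}\to 0$), but your decomposition buys real clarity: the delicate boundary argument is localized at the single point $t=0$ on the line rather than on the $(d-1)$-sphere, and the $d$-dimensionality is handled for free by composition. You are also right to flag that the induction step must actually produce differentiability at $0$ (via difference quotients or the mean-value theorem), not merely continuity of the one-sided limits; the paper's statement that "$\partial^{\alpha}\Psi$ is continuous" glosses over exactly this point, so your version is, if anything, a bit more careful about what the induction needs to establish.
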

\begin{proof}
    By induction, we get that for any $\alpha \in \N^d$, $\partial^{\alpha} \phi (x) = \frac{P_{\alpha}(x)}{Q_{\alpha}(x)} e^{- \frac{1}{1 - \|x\|^2}}$ when $\|x\| < 1$ where $P_{\alpha}$ and $Q_{\alpha}$ are polynomial expressions (in the coefficients of their input vector) with $Q_{\alpha}(x) \neq 0$, and immediately $\partial^{\alpha} \Psi(x) = 0$ when $\|x\| > 1$. This proves that $\partial^{\alpha} \Psi$ is continuous on $\R^d$ with $\partial^{\alpha} \Psi(x) = 0$ when $\|x\| \geq 1$ because the exponential term is dominant near the unit circle. Since this holds for any $\alpha \in \N^d$, the result follows.
\end{proof}

\begin{lemma}[Hoeffding's inequalities]
    If $X_1, \dots, X_n$ are independent real-valued random variables such that for any $i$, $a_i \leq X_i \leq b_i$, then for any $t > 0$,
    \begin{equation*}
        \Prob \p{ \left|\sum_i (X_i - \E(X_i)) \right| > t}  \leq 
        2 \exp \p{- \frac{2 t^2}{\sum_i (b_i - a_i)^2}} \;.
    \end{equation*}
    As a consequence, if $X_1, \dots, X_n$ are independent complex-valued random variables such that for any $i$, $X_i \in B(c_i, r_i)$,
    \begin{equation*}
        \Prob \p{\left|\sum_i (X_i - \E(X_i))\right| > t} \leq 4 \exp \p{- \frac{ t^2}{4 \sum_i r_i^2}} \;.
    \end{equation*}
\end{lemma}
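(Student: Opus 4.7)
The plan is to treat the two inequalities separately, leveraging the real-valued bound to derive the complex-valued one.

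For the real-valued inequality, this is the classical Hoeffding bound and I would prove it by the standard Chernoff-Cramér route. Concretely, I would apply Markov's inequality to $e^{\lambda \sum_i (X_i - \E(X_i))}$, use independence to factor the moment generating function, and then bound each factor via Hoeffding's lemma (which states that for any random variable $Y \in [a, b]$ with $\E(Y) = 0$, $\E(e^{\lambda Y}) \leq e^{\lambda^2 (b - a)^2 / 8}$). Optimizing over $\lambda > 0$ yields a one-sided bound with the exponent $-2 t^2 / \sum_i (b_i - a_i)^2$, and a union bound with the symmetric tail multiplies by $2$. This is completely routine, so I would compress it to a short paragraph or even cite a standard reference.

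For the complex-valued inequality, the strategy is to reduce to the real case via the real and imaginary part decomposition. The hypothesis $X_i \in B(c_i, r_i)$ gives $R(X_i) \in [R(c_i) - r_i, R(c_i) + r_i]$ and $I(X_i) \in [I(c_i) - r_i, I(c_i) + r_i]$, both of range $2 r_i$. Then I would observe that for any complex $z$, $|z| > t$ implies $|R(z)| > t / \sqrt{2}$ or $|I(z)| > t / \sqrt{2}$, since otherwise $|z|^2 = R(z)^2 + I(z)^2 \leq t^2$. Combined with the linearity of $R(\cdot)$ and $I(\cdot)$ (and of expectation), this gives
\begin{equation*}
\left\{\left| \sum_i (X_i - \E(X_i)) \right| > t\right\} \subset \left\{\left| \sum_i (R(X_i) - \E R(X_i)) \right| > \tfrac{t}{\sqrt{2}}\right\} \cup \left\{\left| \sum_i (I(X_i) - \E I(X_i)) \right| > \tfrac{t}{\sqrt{2}}\right\}.
\end{equation*}
A union bound followed by the real-valued Hoeffding inequality applied to both the real and imaginary parts (each with range $2 r_i$, giving denominator $\sum_i (2 r_i)^2 = 4 \sum_i r_i^2$) produces $2 \times 2 \exp\bigl(- 2 (t/\sqrt{2})^2 / (4 \sum_i r_i^2)\bigr) = 4 \exp\bigl(- t^2 / (4 \sum_i r_i^2)\bigr)$, which is exactly the claimed bound.

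I do not anticipate any real obstacle here; the only minor care point is getting the constants right in the $t / \sqrt{2}$ split and verifying that $R$ and $I$ preserve independence, which they trivially do since they are deterministic functions of each coordinate.
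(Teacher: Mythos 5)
Your proof is correct and matches the paper's argument essentially step for step: the paper also treats the real Hoeffding bound as a standard fact (citing Tsybakov), and then derives the complex version by the same real/imaginary decomposition, the same union bound at threshold $t/\sqrt{2}$, and the same range-$2r_i$ observation, arriving at the identical constant $4\exp(-t^2/(4\sum_i r_i^2))$.
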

\begin{proof}
    The first inequality for real-valued random variables is folklore, and its proof may for instance be found in \cite{tsybakov2003introduction}. For the claim about complex random variables, we have 
    \begin{equation*}
        \begin{aligned}
             \Prob &\p{\left|\sum_i (X_i - \E(X_i))\right| > t} \\
             &= \Prob \p{\left|\sum_i (X_i - \E(X_i))\right|^2 > t^2} \\
             &= \Prob \p{R\p{\sum_i (X_i - \E(X_i))}^2 + I\p{\sum_i (X_i - \E(X_i))}^2 > t^2} \\
             &\leq
             \Prob \p{R\p{\sum_i (X_i - \E(X_i))}^2 > t^2/2}
             +
             \Prob \p{I\p{\sum_i (X_i - \E(X_i))}^2 > t^2/2} \\
             &=
             \Prob \p{\p{\sum_i (R(X_i) - \E(R(X_i)))}^2 > t^2/2}
             +
             \Prob \p{\p{\sum_i (I(X_i) - \E(I(X_i)))}^2 > t^2/2} \\
             &\leq 
             2 \exp \p{- \frac{2 (t/\sqrt{2})^2}{\sum_i (2 r_i)^2}} + 2 \exp \p{- \frac{2 (t/\sqrt{2})^2}{\sum_i (2 r_i)^2}} \;,
        \end{aligned}
    \end{equation*}
    where the last inequality comes from Hoeffding's inequality for real-valued random variables.
\end{proof}

\begin{lemma}[$\chi^2$ concentration]
\label{factChiSquaredConcentration}
    Let $X_1, \dots, X_d$ be i.i.d. random variables with distribution $\mathcal{N}(0, \sigma^2)$. Let us define $Z = X_1^2 + \dots + X_d^2$. Then, for any $\delta > 0$,
    \begin{equation}
        \Prob \p{Z \geq (1+\delta) d \sigma^2} \leq \max \left\{ e^{- \frac{d \delta^2}{4}} , e^{- \frac{d \delta}{2}}\right\}  \;.
    \end{equation}
    Furthermore, the integrated version gives, for any $\delta > 0$,
    \begin{equation}
        \E \p{\p{Z - (1+\delta) d \sigma^2}_+} \leq  \frac{2 \sigma^2}{\delta} e^{- \frac{d \delta^2}{4}} + 2 \sigma^2 e^{- \frac{d \delta}{2}}\;.
    \end{equation}
\end{lemma}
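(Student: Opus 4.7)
The plan is to reduce to $\sigma = 1$ via the scaling $X_i \mapsto X_i/\sigma$ so that $Z/\sigma^2$ is a standard chi-squared variable with $d$ degrees of freedom, and then to invoke the Laurent--Massart tail bound
\begin{equation*}
  \Prob\p{Z/\sigma^2 \geq d + 2\sqrt{dx} + 2x} \leq e^{-x} \quad \forall x > 0,
\end{equation*}
which is itself obtained by a Chernoff argument: for $Y_i := X_i^2 - 1$ (with $X_i \sim \mathcal{N}(0,1)$), one computes $\log \E [e^{\lambda Y_i}] = -\lambda - \tfrac{1}{2}\log(1-2\lambda)$ for $\lambda \in (0, 1/2)$ and checks by elementary calculus (monotonicity of $\lambda \mapsto \lambda^2/(1-2\lambda) + \lambda + \tfrac12\log(1-2\lambda)$, vanishing at $0$ with non-positive derivative) that this is bounded above by $\lambda^2/(1-2\lambda)$. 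Summing over the $d$ independent coordinates and optimizing against the parametrization $u = 2\sqrt{dx} + 2x$ then produces the displayed inequality; I shall treat this as a standard fact.

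To extract the two-sided tail bound, I will instantiate Laurent--Massart twice. Taking $x = d\delta^2/4$ gives $2\sqrt{dx} + 2x = d\delta + d\delta^2/2 \geq d\delta$, so $\Prob(Z \geq (1+\delta)d\sigma^2) \leq e^{-d\delta^2/4}$; taking $x = d\delta/2$ gives $2\sqrt{dx} + 2x = d\sqrt{2\delta} + d\delta \geq d\delta$, so $\Prob(Z \geq (1+\delta)d\sigma^2) \leq e^{-d\delta/2}$. The minimum of the two upper bounds---\emph{a fortiori} their maximum---is exactly the claim.

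For the integrated version I will use the layer-cake formula $\E \p{\p{Z - (1+\delta)d\sigma^2}_+} = \int_0^{\infty} \Prob\p{Z > (1+\delta)d\sigma^2 + t}\,dt$ and change variables $u = \delta + t/(d\sigma^2)$, so that after bounding $\max\{A, B\} \leq A + B$ the task reduces to estimating $d\sigma^2\int_\delta^\infty e^{-du^2/4}\,du$ and $d\sigma^2\int_\delta^\infty e^{-du/2}\,du$. The exponential integral evaluates exactly to $2\sigma^2 e^{-d\delta/2}$, and the Gaussian one is handled by the elementary tail estimate $\int_\delta^\infty e^{-du^2/4}\,du \leq \frac{2}{d\delta}\,e^{-d\delta^2/4}$ (obtained from $1 \leq u/\delta$ on the integration domain), yielding the $\frac{2\sigma^2}{\delta}\,e^{-d\delta^2/4}$ term. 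The only point worth flagging is that the \emph{optimal} Chernoff choice $\lambda = \delta/(2(1+\delta))$ yields an exponent $(d/2)(\log(1+\delta) - \delta)$ that is in fact strictly larger than $-d\delta^2/4$ (since $\log(1+\delta) \geq \delta - \delta^2/2$), so one cannot simply read $e^{-d\delta^2/4}$ off the naive optimization; the Laurent--Massart reparametrization circumvents this with a deliberately suboptimal $\lambda$ that decouples the $\sqrt{dx}$ and $2x$ contributions cleanly, and this mild subtlety is the only non-routine ingredient.
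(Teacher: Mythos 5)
There is a genuine gap in the tail-bound step, and it happens to mirror an error in the paper's own proof. The Laurent--Massart inequality $\Prob\p{Z/\sigma^2 \ge d + 2\sqrt{dx}+2x} \le e^{-x}$ yields an upper bound on $\Prob\p{Z \ge (1+\delta)d\sigma^2}$ only if the event $\{Z \ge (1+\delta)d\sigma^2\}$ is \emph{contained} in $\{Z/\sigma^2 \ge d + 2\sqrt{dx}+2x\}$, which requires $2\sqrt{dx}+2x \le d\delta$. You instead verify the \emph{reverse} inequality: for $x = d\delta^2/4$ you correctly compute $2\sqrt{dx}+2x = d\delta + d\delta^2/2$, which is $\ge d\delta$, so the Laurent--Massart threshold lies \emph{above} $(1+\delta)d\sigma^2$ and the set inclusion runs the wrong way; the conclusion $\Prob\p{Z \ge (1+\delta)d\sigma^2} \le e^{-d\delta^2/4}$ does not follow. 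The same reversal occurs for $x = d\delta/2$.

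The ``mild subtlety'' you flag at the end is in fact the fatal obstruction, not something the Laurent--Massart parametrization circumvents. You correctly observe that the optimal Chernoff exponent $\frac{d}{2}\p{\delta - \log(1+\delta)}$ is strictly smaller than $d\delta^2/4$ for every $\delta>0$ (since $\log(1+\delta) > \delta - \delta^2/2$); but Laurent--Massart is itself a Chernoff bound at a deliberately suboptimal parameter, hence is necessarily \emph{weaker} than the optimized Chernoff bound and cannot produce anything the optimum cannot. The stated inequality is therefore unprovable by any Chernoff-type argument, and it is in fact false for $d$ large at fixed $\delta$: Cram\'er's theorem gives $\Prob\p{Z \ge (1+\delta)d\sigma^2} = \exp\p{-\frac{d}{2}(\delta-\log(1+\delta))(1+o(1))}$, which eventually exceeds $e^{-d\min\{\delta^2/4,\,\delta/2\}}$; numerically, $\Prob(\chi^2_{100}\ge 200)\approx 1.2\cdot 10^{-8}$ whereas $e^{-25}\approx 1.4\cdot 10^{-11}$. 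A version that does follow from Laurent--Massart, by asking each of $2\sqrt{dx}$ and $2x$ separately to be at most $d\delta/2$, carries the weaker exponents $d\delta^2/16$ and $d\delta/4$ in place of $d\delta^2/4$ and $d\delta/2$. Your layer-cake computation for the integrated bound is routine and correct conditional on the tail inequality; the gap, like the one in the paper's own argument (which implicitly bounds $2\sigma^2\sqrt{d\min\{x_1,x_2\}}+2\sigma^2\min\{x_1,x_2\}$ by $d\delta\sigma^2$ when the sum of the two terms can reach $2d\delta\sigma^2$), lies entirely in the tail bound.
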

\begin{proof}
    According to Lemma 1 in \cite{laurent2000adaptive}, for any $x > 0$, 
    \begin{equation}
        \Prob \p{Z \geq d \sigma^2  + 2 \sigma^2 \sqrt{d x} + 2 \sigma^2 x} \leq e^{-x} \;.
    \end{equation}
    Furthermore, we have $\delta \sigma^2 d = 2 \sigma^2 \sqrt{d x_1}$ iff $x_1 = \frac{d \delta^2}{4}$ and $\delta \sigma^2 d = 2 \sigma^2 x_2$ iff $x_2 = \frac{d \delta}{2}$. By noting $f(x) = 2 \sigma^2 \sqrt{d x} + 2 \sigma^2 x$, we have 
    \begin{equation}
    \begin{aligned}
        \Prob \p{Z \geq (1+\delta) d \sigma^2} &\leq \Prob \p{Z \geq d \sigma^2  +  f(\min \{ x_1, x_2\})}  \\
        &\leq e^{- \min \{ x_1, x_2\}} \\
        &= \max \left\{ e^{- \frac{d \delta^2}{4}} , e^{- \frac{d \delta}{2}}\right\} \;.
    \end{aligned}
    \end{equation}

    Furthermore,
     \begin{equation}
     \begin{aligned}
        \E \p{\p{Z - (1+\delta) d \sigma^2}_+} &\leq  
        \int_{(1+\delta) d \sigma^2}^{+ \infty} \Prob (Z \geq t) dt \\ 
        &= \int_{\delta}^{+ \infty} d \sigma^2\Prob (Z \geq (1+u) d \sigma^2) du \\
        &\leq \int_{\delta}^{+ \infty} d \sigma^2  \p{e^{- \frac{d u^2}{4}} + e^{- \frac{d u}{2}}} du \\
        &\leq \int_{\delta}^{+ \infty} d \sigma^2  \p{ \frac{u}{\delta} e^{- \frac{d u^2}{4}} + e^{- \frac{d u}{2}}} du \\
        &\stackrel{}{\leq} \frac{2 \sigma^2}{\delta} e^{- \frac{d \delta^2}{4}} + 2 \sigma^2 e^{- \frac{d \delta}{2}}\;.
    \end{aligned}
    \end{equation}
\end{proof}

\begin{lemma}[Talagrand's inequality (one of many) (From Appendix A in \cite{Non_Paramatric_Comte_2017})]
\label{factTalagrandInequality}
Let $n \in \N \setminus \{0\}$, $\mathcal{F}$ be a countable family of real-valued measurable functions and $(X_i)_{i = 1 \dots, n}$ be $n$ \emph{independent} random variables taking values in a common Polish space. By noting, for any $f \in \mathcal{F}$,
\begin{equation}
    \nu_n(f)  \eqdef \frac{1}{n} \sum_{i=1}^n (f(X_i) - \E(f(X_i))) \;,
\end{equation}
if there exist three positive constants $M_1$, $H$ and $v$ such that
\begin{equation}
    \sup_{f \in \mathcal{F}} \| f \|_{\infty} \leq M_1 \;,
\end{equation}
\begin{equation}
    \E \p{\sup_{f \in \mathcal{F}} |\nu_n(f)|} \leq H \;,
\end{equation}
\begin{equation}
    \sup_{f \in \mathcal{F}} \frac{1}{n} \sum_{i=1}^n \V(f(X_i)) \leq v \;,
\end{equation}
then for any $\delta > 0$,
\begin{equation}
    \begin{aligned}
        \E \p{\p{\sup_{f \in \mathcal{F}} |\nu_n(f)|^2 - 2 (1+2 \delta ) H^2}_+}
        &\leq
        \frac{4}{K_1} \p{\frac{v}{n} e^{- K_1 \delta \frac{n H^2}{v}} + \frac{49 M_1^2}{K_1 K(\delta)^2 n^2}  e^{- \frac{\sqrt{2} K_1 K(\delta) \sqrt{\delta}}{7} \frac{n H}{M_1}}} \;,
    \end{aligned}
\end{equation}
where $(y)_+ \eqdef \max\{y, 0 \}$, $K_1 \eqdef \frac{1}{6}$ and $K(\delta) \eqdef \min \{ \sqrt{1+ \delta} - 1, 1\} $.
\end{lemma}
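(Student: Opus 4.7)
The plan is to derive this inequality from the standard Bousquet--Klein--Rio form of Talagrand's inequality for the supremum of a bounded empirical process, which states that under the hypotheses of the lemma one has, for every $x>0$,
\begin{equation*}
\mathbb{P}\p{\sup_{f\in\mathcal{F}} |\nu_n(f)| \geq H + \sqrt{\tfrac{2x(v+2HM_1)}{n}} + \tfrac{M_1 x}{3n}} \leq e^{-x}.
\end{equation*}
This is the by-now-classical concentration statement for empirical processes, and the measurability of the supremum is handled by the countability assumption on $\mathcal{F}$. The first step would be to square the threshold: on the above event, writing $\sup|\nu_n| \leq H + A + B$ with $A = \sqrt{2x(v+2HM_1)/n}$ and $B = M_1 x/(3n)$, one has $(H+A+B)^2 \leq (1+2\delta) H^2 + (1 + \tfrac{1}{2\delta})(A+B)^2$. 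Choosing the multiplier $1+2\delta$ is exactly what isolates the threshold $2(1+2\delta)H^2$ appearing in the statement.

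From this squared inequality I would invert $x$ as a function of the slack $t$ defined by $\sup |\nu_n|^2 \geq 2(1+2\delta) H^2 + t$. Separating the variance regime $A \geq B$ from the boundedness regime $A < B$ yields a two-term sub-Bernstein tail of the form
\begin{equation*}
\mathbb{P}\p{\sup_{f\in\mathcal{F}} |\nu_n(f)|^2 \geq 2(1+2\delta)H^2 + t} \leq e^{-c_1 \delta n t / v} + e^{-c_2 K(\delta)\sqrt{\delta n t}/M_1},
\end{equation*}
where the precise factor $K(\delta) = \min\{\sqrt{1+\delta}-1,1\}$ arises from bounding the cross-term $2H(A+B)$: when $\delta$ is small, the usable excess in $2\delta H^2$ scales like $\sqrt{1+\delta}-1$, while for large $\delta$ it saturates at an absolute constant.

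The final step is the integration-by-parts identity $\mathbb{E}((Y-u_0)_+) = \int_0^\infty \mathbb{P}(Y \geq u_0 + t)\,dt$ applied to $Y = \sup|\nu_n|^2$ and $u_0 = 2(1+2\delta)H^2$. Splitting the integral according to the Gaussian versus the exponential tail above produces two contributions of the form $\Theta\p{\tfrac{v}{n}} e^{-\Theta(nH^2/v)}$ and $\Theta\p{\tfrac{M_1^2}{n^2}} e^{-\Theta(\sqrt{n}H/M_1)}$, which matches the claimed bound up to universal constants.

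The main obstacle is not conceptual but bookkeeping: recovering the explicit constant $K_1 = 1/6$ and the coefficient $\sqrt{2}/7$ in the second exponent requires a careful two-regime optimization between the Bernstein variance term and the Bernstein boundedness term in the original Bousquet inequality, together with a judicious split of the excess $2\delta H^2$ between them. Each individual step is routine, but the calibration is delicate, and this is precisely what the Comte reference carries out. An alternative route would be to factor $Y^2 - u_0^2 = (Y-u_0)(Y+u_0)$ and apply moment bounds directly, but the resulting constants are noticeably worse.
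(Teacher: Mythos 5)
The paper does not supply a proof of this lemma: it is stated as imported verbatim from Appendix~A of the cited Comte reference, so there is no in-paper argument to compare against. Your sketch nonetheless follows what is, to my knowledge, exactly the route taken there: start from the Bousquet/Klein--Rio deviation bound for the supremum of a bounded empirical process (applied to $\mathcal{F}\cup(-\mathcal{F})$ to handle $\sup|\nu_n|$, with countability giving measurability), pass to the squared process by an elementary Young-type inequality, separate the sub-Gaussian (variance-dominated) and sub-exponential ($M_1$-dominated) regimes, and finish with the integrated tail identity $\E\bigl[(Y-u_0)_+\bigr]=\int_0^\infty \Prob(Y\geq u_0+t)\,dt$. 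That is the correct structure, and deferring the explicit constants $K_1=1/6$, $\sqrt{2}/7$ and the function $K(\delta)$ to the reference is reasonable, since the paper does the same.

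One algebraic remark worth flagging: the squaring step you wrote, $(H+A+B)^2\leq(1+2\delta)H^2+(1+\tfrac{1}{2\delta})(A+B)^2$, isolates a threshold of $(1+2\delta)H^2$, not the $2(1+2\delta)H^2$ appearing in the lemma. The standard way to reach the stated threshold is instead to use the inequality $a^2-2b^2\leq 2(a-b)_+^2$ with $a=\sup|\nu_n|$ and $b=\sqrt{1+2\delta}\,H$, which gives
\begin{equation*}
\Bigl(\sup_{f}|\nu_n(f)|^2-2(1+2\delta)H^2\Bigr)_+\leq 2\Bigl(\sup_f|\nu_n(f)|-\sqrt{1+2\delta}\,H\Bigr)_+^2\,,
\end{equation*}
and then uses the excess $(\sqrt{1+2\delta}-1)H$ (whence the appearance of $K(\delta)=\min\{\sqrt{1+\delta}-1,1\}$, after further balancing) to drive the exponential decay when integrating the Klein--Rio tail. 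So your plan is conceptually right but would not quite reproduce the stated threshold as written; with the above replacement it does, and matches the Comte derivation.
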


\end{document}